\newtheorem{theorem}{Theorem}[section]
\newtheorem{corollary}[theorem]{Corollary}
\newtheorem{lemma}[theorem]{Lemma}
\newtheorem{proposition}[theorem]{Proposition}
\theoremstyle{definition}
\numberwithin{equation}{section}
\begin{document}

%%%%% To ease editing, for IMPAN journals add:

\baselineskip=17pt

\title{{\bfseries Annihilator ideals of indecomposable modules of finite-dimensional pointed Hopf algebras of rank one}}

\author{
   {\small Yu Wang}\\
   {\small  School of Mathematics and Physics,
   Jiangsu University of Technology,}\\{\small
Changzhou, Jiangsu 213001, P. R. China}\\
{\small Email address: yu.wang@mail.sdu.edu.cn}
}

\date{}

\maketitle

%% Classification and key words; note that the 2010 classification is used:
\renewcommand{\thefootnote}{}
\footnote{}
\renewcommand{\thefootnote}{\arabic{footnote}}
\setcounter{footnote}{0}
\begin{abstract}
Let $H$ be a finite-dimensional pointed Hopf algebra of rank one over an algebraically closed field of characteristic zero. In this paper we describe all annihilator ideals of indecomposable $H$-modules by generators. In particular, we give the classification of all ideals of finite-dimensional pointed Hopf algebra of rank one of nilpotent type over Klein $4$-group.

\end{abstract}
{\bf 2020 Mathematics Subject Classification:}  16D25; 20G42 \\
{\bf Keywords:} indecomposable module; annihilator ideal; primitive central orthogonal idempotent element.

\section{Introduction}

\indent

We start with a brief introduction to the history of pointed Hopf algebras of rank one. Let $A$ be a Hopf algebra over a field $\Bbbk$ and $A_0\subseteq A_1\subseteq A_2\subseteq\cdots$ the coradical filtration of $A$. Assume that $A_0$ is a Hopf subalgebra of $A$. If $A$ is generated by $A_1$ as an algebra and $\dim_{\Bbbk}(\Bbbk\otimes_{A_0}A_1)=n+1$, then $A$ is called a Hopf algebra of rank $n$. If every simple subcoalgebra of $A$ is one-dimensional, then $A$ is called pointed. Krop and Radford \cite{MR2236601} classified all finite-dimensional pointed Hopf algebras of rank one over an algebraically closed field of characteristic zero in terms of group data. And a characterization of finite-dimensional pointed Hopf algebras of rank one over an algebraically closed field of prime characteristic was obtained by Scherotzke in \cite{MR2397414}. He gave three types of Hopf algebras presented by generators and relations, and determined the indecomposable projective modules for certain Hopf algebras in every type. Wang et al. \cite{MR3357949} studied the representation theory of arbitrary dimensional pointed Hopf algebras of rank one over an arbitrary field. And they described the structure of all simple weight modules and all finite-dimensional indecomposable weight modules. Wang et al. \cite{MR3284336,MR3448167} studied the Green ring $r(H)$ of $H$, where $H$ is a finite-dimensional pointed Hopf algebras of rank one over an algebraically closed field of characteristic zero, and presented it in terms of generators and relations. They also studied the Jacobson radical $J(r(H))$ of the Green ring $r(H)$ and the Green ring of the stable category of left $H$-modules.

Recall that a ring is called a principal ideal ring if every two-sided ideal is generated by a single element. Principle ideal rings are important objects in ring theory and have many applications in representation theory. It is of great interest to determine whether a ring is a principal ideal ring. There has been momentous attention to principal ideal rings in history. For instance, Fisher et al. \cite{MR0049909,MR0396653,MR0457540} discussed that under certain conditions a group algebra over an arbitrary field is a principal ideal ring. Afterwards, Decruyenaere and Jespers \cite{MR1314687} investigated when a commutative ring with identity graded by an abelian group is a principal ideal ring.  Chimal-Dzul and L\'{o}pez-Andrade \cite{MR3775984} studied the polynomial ring $R[x]$, where $R$ is a finite commutative ring with identity. They proved that $R[x]$ is a principal ideal ring if and only if $R$ is a finite direct product of finite fields. Lately Alvarado-Garc\'{i}a et al. \cite{MR3928507} introduced the concepts of parainjectivity and paraprojectivity, and they obtain some characterizations of artinian principal ideal rings.

There has been also substantial interest in semigroup algebras. Decruyenaere et al. \cite{MR1125068} characterized commutative semigroup algebras with identity which are principal ideal rings. In the more general case of non-commutative algebras, Jespers and Okni\'nski \cite{MR1401676} described all principal ideal semigroup algebras. Subsequently Ara\'{u}jo et al. \cite{MR2100354} developed an algorithm which, given a presentation for a commutative semigroup $S$ and the characteristic of a field $\Bbbk$, decides whether the semigroup algebra $\Bbbk [S]$ is a principal ideal ring with identity.

Catoiu et al. \cite{MR1614186,MR1856919,MR3646318} showed that some (quantized) enveloping algebras are principal ideal rings. In particular, Siciliano and Usefi \cite{MR3646318} raised an open question that under what condition a Hopf algebra is a principal ideal ring. We settle the problem for an significant class of Hopf algebras. Let $H$ be a finite-dimensional Hopf algebra of rank one over an algebraically closed field of characteristic zero. We \cite{MR4256338} proved that $H$ is a principle ideal ring. As is well known, the Radford Hopf algebras provide examples of finite-dimensional pointed Hopf algebras of rank one. Then in \cite{MR4296825} we described all ideals and annihilator ideals of indecomposable modules of the Radford Hopf algebras by generators. Moreover, we \cite{yuwang,MR4296825} classified all ideals of $8$-dimensional and $9$-dimensional Radford Hopf algebras. Based on the results of \cite{MR4256338}, this paper is devoted to characterizing annihilator ideals of indecomposable $H$-modules by virtue of the primitive central orthogonal idempotents of associate group algebra.

This paper is organized as follows. In Section $2$, we recall from \cite{MR3284336,MR3448167} all indecomposable modules of a finite-dimensional pointed Hopf algebra of rank one. In Section $3$, we describe the generators of all ideals of $H$ by using the central idempotents of associate group algebra. In Section $4$, we give the classification of annihilator ideals of indecomposable $H$-modules. When $H$ is of nilpotent type, we also determine the completely prime ideals. In Section $5$, we give explicitly the classification of all ideals of finite-dimensional pointed Hopf algebra of rank one of nilpotent type over Klein $4$-group.

Throughout, we work over an algebraically closed field $\Bbbk$ of characteristic zero. Unless other stated, all algebras, Hopf algebras and modules are vector spaces over $\Bbbk$; all modules are finite-dimensional left modules; all maps are $\Bbbk$-linear;  $\otimes$ means $\otimes_{\Bbbk}$. For a group $G$, denote by $Z(G)$ the center of $G$ and $Z(\Bbbk G)$ the center of the group algebra $\Bbbk G$. We assume that the reader is familiar with the basics of Hopf algebras and representation theory. References \cite{MR1321145,MR1243637} are suggested for the former and \cite{MR2197389,MR1314422} are suggested for the latter.

\section{Indecomposable modules}
\indent

In this section, we retrospect the construction of a finite-dimensional pointed Hopf algebra of rank one from a group datum $\mathcal{D}$. A quadruple $\mathcal{D}=(G,\chi,g,\alpha)$ is called a group datum if $G$ is a finite group, $\chi$ a $\Bbbk$-linear character of $G$, $g\in Z(G)$, and $\alpha\in\Bbbk$, provided $\alpha(g^n-1)=0$ or $\chi^n=1$, where $n$ is the order of $\chi(g)$. If $\alpha(g^n-1)=0$, we say that the group datum $\mathcal{D}$ is of nilpotent type; otherwise $\mathcal{D}$ is of non-nilpotent type (see \cite{MR2236601}). Since $\Bbbk$ is algebraically closed, we may assume that $\alpha=0$ or $\alpha=1$.

Given a group datum $\mathcal{D}=(G,\chi,g,\alpha)$, we let $H_{\mathcal{D}}$ be an associative algebra generated by $z$ and all $s$ in $G$ such that $\Bbbk G$ is a subalgebra of $H_{\mathcal{D}}$ and
\begin{equation}\label{relation}
z^n=\alpha(g^n-1), \quad zs=\chi(s)sz \quad \text{for}\ s\in G.
\end{equation}
The algebra $H_{\mathcal{D}}$ is finite-dimensional with a $\Bbbk$-basis $\{z^lh\mid 0\leqslant l\leqslant n-1,\, h\in G\}$. Therefore $\dim H_{\mathcal{D}}=n|G|$, where $|G|$ is the order of $G$.

We can endow $H_{\mathcal{D}}$ with a Hopf algebra structure, where the comultiplication $\Delta$, the counit $\varepsilon$, and the antipode $S$ are given respectively by
\begin{align}
\Delta(z)&=z\otimes g+1\otimes z,\quad & \varepsilon(z)&=0,\quad & S(z)&=-zg^{-1},\label{comu}\\
\Delta(s)&=s\otimes s,\quad & \varepsilon(s)&=1,\quad & S(s)&=s^{-1}\label{antipode}
\end{align}
for all $s\in G$.
It is easy to see that $H_{\mathcal{D}}$ is a pointed Hopf algebra of rank one  with $G$ being the group of group-like elements of $H_{\mathcal{D}}$ (see \cite{MR2236601}). Moreover, $H_{\mathcal{D}}$ is called of nilpotent type if $\mathcal{D}$ is so, and of non-nilpotent type, otherwise.
If $n=1$, then the Hopf algebra $H_{\mathcal{D}}$ equals the group algebra $\Bbbk G$. To avoid this circumstances we assume that $n\geqslant2$ throughout this paper, which implies that $g\neq1$ and $\chi\neq\varepsilon$.

For any $h\in \Bbbk G$ with $\Delta(h)=\sum h_1\otimes h_2,$
the $\Bbbk$-linear character $\chi$ induces an automorphism $\sigma$ of $\Bbbk G$ as $\sigma(h)=\sum\chi(h_1)h_2.$
In this case,
\begin{equation}\label{eequ2.4}
z^mh=\sigma^m(h)z^m\ \text{for}\ 0\leqslant m\leqslant n-1.
\end{equation}

The proposition below provides a classification of Hopf algebras $H_{\mathcal{D}}$ with group data $\mathcal{D}$ (see \cite[Theorem 1]{MR2236601} or \cite[Theorem 5.9]{MR2047446}).
\begin{proposition}\cite[Theorem 1]{MR2236601}\label{2.1}
Let $\mathcal{D}=(G,\chi,g,\alpha)$ and $\mathcal{D}^\prime=(G^\prime,\chi^\prime,g^\prime,\alpha^\prime)$ be two group data. Then the Hopf algebras $H_{\mathcal{D}}$ and $H_{\mathcal{D}^\prime}$ are isomorphic as Hopf algebras if and only if there is a group isomorphism $f:G\rightarrow G^\prime$ such that $f(g)=g^\prime$, $\chi=\chi^\prime\circ f$ and $\beta\alpha^\prime(g^{\prime n}-1)=\alpha(g^{\prime n}-1)$ for some non-zero $\beta\in\Bbbk$, where $n$ is the order of $\chi(g)$. And every finite-dimensional pointed Hopf algebra of rank one over $\Bbbk$ is isomorphic to $H_{\mathcal{D}}$ for some datum ${\mathcal{D}}$.
\end{proposition}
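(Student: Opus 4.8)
The assertion has three parts: that the listed data force $H_{\mathcal{D}}\cong H_{\mathcal{D}^\prime}$, that an isomorphism forces such data, and that every finite-dimensional pointed Hopf algebra of rank one is some $H_{\mathcal{D}}$. The plan is to handle the first two by a direct computation with the presentation \eqref{relation}--\eqref{antipode} together with an analysis of skew-primitive elements, and to reduce the third to the structure theory of the coradical filtration.

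For sufficiency, assume $f\colon G\to G^\prime$ is a group isomorphism with $f(g)=g^\prime$, $\chi=\chi^\prime\circ f$, and $\beta\alpha^\prime(g^{\prime n}-1)=\alpha(g^{\prime n}-1)$ for some $0\neq\beta\in\Bbbk$. As $\Bbbk$ is algebraically closed, choose $c\in\Bbbk$ with $c^n=\beta$ and define $\Phi\colon H_{\mathcal{D}}\to H_{\mathcal{D}^\prime}$ on generators by $\Phi(s)=f(s)$ for $s\in G$ and $\Phi(z)=cz^\prime$. One checks $\Phi$ respects \eqref{relation}: $\Phi(z)\Phi(s)=cz^\prime f(s)=c\chi^\prime(f(s))f(s)z^\prime=\chi(s)\Phi(s)\Phi(z)$, and $\Phi(z)^n=c^nz^{\prime n}=\beta\alpha^\prime(g^{\prime n}-1)=\alpha(g^{\prime n}-1)=\Phi\bigl(\alpha(g^n-1)\bigr)$, so $\Phi$ is a well-defined algebra map; it is a coalgebra map, hence a Hopf map, by \eqref{comu}--\eqref{antipode}, since $\Phi(z)\otimes\Phi(g)+1\otimes\Phi(z)=c(z^\prime\otimes g^\prime+1\otimes z^\prime)=\Delta^\prime(\Phi(z))$. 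Finally $\Phi$ carries the basis $\{z^lh\}$ onto the basis $\{c^lz^{\prime l}f(h)\}$, so it is bijective.

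For necessity, let $\phi\colon H_{\mathcal{D}}\to H_{\mathcal{D}^\prime}$ be a Hopf isomorphism. It carries group-likes to group-likes, so it restricts to a group isomorphism $f\colon G\to G^\prime$ and to an algebra isomorphism $\Bbbk G=(H_{\mathcal{D}})_0\to(H_{\mathcal{D}^\prime})_0=\Bbbk G^\prime$. The key computation is, for a group-like $h$, the space $P_{h,1}=\{x:\Delta(x)=x\otimes h+1\otimes x\}$: writing a candidate in $H_1=\Bbbk G\oplus\Bbbk Gz$ and comparing tensor components shows $P_{h,1}=\Bbbk(1-h)$ unless $h=g$, in which case $P_{g,1}=\Bbbk(1-g)\oplus\Bbbk z$; the analogous statement holds in $H_{\mathcal{D}^\prime}$ with $g^\prime$. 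Since $\phi$ maps $P_{g,1}$ isomorphically onto $P_{f(g),1}$ and $\dim P_{g,1}=2$, this forces $f(g)=g^\prime$ and $\phi(z)=cz^\prime+d(1-g^\prime)$ with $c\neq0$ (because $\phi(\Bbbk G)=\Bbbk G^\prime$ while $z\notin\Bbbk G$). Applying $\phi$ to $zs=\chi(s)sz$ and comparing the coefficient of $f(s)z^\prime$ yields $\chi^\prime(f(s))=\chi(s)$, i.e.\ $\chi=\chi^\prime\circ f$; comparing the components in $\Bbbk G^\prime$ gives $d(1-\chi(s))f(s)(1-g^\prime)=0$, and since $\chi\neq\varepsilon$ and $g^\prime\neq1$ (here $n\geqslant2$ enters) we get $d=0$. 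Then $\phi(z)^n=c^nz^{\prime n}=c^n\alpha^\prime(g^{\prime n}-1)$ must equal $\phi\bigl(\alpha(g^n-1)\bigr)=\alpha(g^{\prime n}-1)$, so $\beta:=c^n\neq0$ satisfies $\beta\alpha^\prime(g^{\prime n}-1)=\alpha(g^{\prime n}-1)$.

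For the last assertion, given a finite-dimensional pointed Hopf algebra $A$ of rank one, its coradical is $A_0=\Bbbk G$ with $G=G(A)$, and the rank-one hypothesis produces a group-like $g\in G$ together with a nontrivial $(g,1)$-skew primitive $z$ such that $A=\langle\Bbbk G,z\rangle$. One then shows that $g\in Z(G)$; that the scalar by which conjugation by $h\in G$ acts on $z$ modulo $\Bbbk(1-g)$ defines a $\Bbbk$-linear character $\chi$ with $zh=\chi(h)hz$; that, writing $n$ for the order of $\chi(g)$, the element $z^n$ lies in $\Bbbk G$ and hence equals $\alpha(g^n-1)$ for some $\alpha\in\Bbbk$; and that the datum compatibility $\alpha(g^n-1)=0$ or $\chi^n=1$ is automatic. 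This exhibits $A$ as a quotient of $H_{\mathcal{D}}$ for $\mathcal{D}=(G,\chi,g,\alpha)$, and $\dim H_{\mathcal{D}}=n|G|=\dim A$ forces the quotient map to be an isomorphism. I expect this realization step --- extracting the single extra generator and proving it obeys exactly the relations \eqref{relation} --- to be the main obstacle; it is where the rank-one condition is genuinely used, via the associated graded Hopf algebra and a lifting argument, and it is the technical core of the Krop--Radford classification.
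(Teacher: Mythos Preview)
The paper does not supply a proof of Proposition~2.1; it is quoted verbatim from \cite[Theorem~1]{MR2236601} (with a parallel reference to \cite[Theorem~5.9]{MR2047446}) as background for Section~2, and the text moves on immediately to the classification of indecomposables. So there is no in-paper argument to compare your proposal against.

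On its own merits, your treatment of the ``if and only if'' is correct. In the sufficiency direction the only point worth making explicit is that $\chi=\chi'\circ f$ and $f(g)=g'$ force $\chi(g)=\chi'(g')$, so the two values of $n$ agree and the basis count for bijectivity is legitimate. In the necessity direction your computation of the skew-primitive spaces is right: writing a candidate $x=\sum a_s s+\sum b_s sz\in H_1$ and matching tensor components does give $P_{h,1}=\Bbbk(1-h)$ for $h\neq g$ and $P_{g,1}=\Bbbk(1-g)\oplus\Bbbk z$, which pins down $f(g)=g'$ and then $\chi=\chi'\circ f$, $d=0$, $\beta=c^n$ as you derive.

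Your third paragraph is, as you acknowledge, only an outline. The steps you name are the correct ones, but several are nontrivial in an abstract $A$ whose structure is not yet known: arranging that $zh=\chi(h)hz$ holds on the nose rather than modulo $\Bbbk(1-g)$, establishing $g\in Z(G)$, and showing $z^n\in\Bbbk G$ (your skew-primitive calculation used the explicit basis of $H_{\mathcal D}$, which is not yet available for $A$). These are exactly the points where Krop--Radford work with the associated graded Hopf algebra and lift, so your outline is honest about where the substance lies, but it is not a self-contained proof of that clause.
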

In the sequel, we recall the classification of indecomposable modules of finite-dimensional pointed Hopf algebras of rank one of nilpotent type and non-nilpotent type, respectively. %If there is no confused, we denote $H_{\mathcal{D}}$ by $H$.\\

{\bf Case $1$}: The group datum $\mathcal{D}=(G,\chi,g,\alpha)$ is of nilpotent type, namely, $\alpha(g^n-1)=0$, where $n$ is the order of $q:=\chi(g)$. In this case, it is either $\alpha=0$ or $g^n-1=0$. In both situations, Proposition \ref{2.1} implies that the Hopf algebras constructed from  $(G,\chi,g,\alpha)$ and $(G,\chi,g,0)$ respectively are isomorphic. Hence we may suppose that $\alpha=0$ for any group datum $\mathcal{D}=(G,\chi,g,0)$ of nilpotent type. For simplicity, we shall drop the subscript $\mathcal{D}$ from $H_{\mathcal{D}}$, and denote by $H$ the Hopf algebra $H_{\mathcal{D}}$ associated with the group datum $\mathcal{D}=(G,\chi,g,0)$.

Since the Jacobson radical of $H$ is $J=(z)$ and $H/J\cong\Bbbk G$, an $H$-module is simple if and only if the restriction of it to $\Bbbk G$ is simple. Hence a complete set of non-isomorphic simple $\Bbbk G$-modules forms a complete set of non-isomorphic simple $H$-modules. Assume that $G$ has $p$ non-equivalent irreducible characters and $\Omega_0=\{0,1,\cdots, p-1\}$. We fix such a complete set \{$V_i\mid i\in\Omega_0$\} of non-isomorphic simple $\Bbbk G$ (and $H$)-modules.

Let $V_\chi$ and $V_{\chi^{-1}}$ be two $1$-dimensional simple $\Bbbk G$-modules corresponding to the $\Bbbk$-linear character $\chi$ and $\chi^{-1}$ respectively. For any simple $\Bbbk G$-module $V_i$, the tensor product $V_{\chi^{-1}}\otimes V_i\cong V_i\otimes V_{\chi^{-1}}$ is simple as well. Therefore, there is a unique permutation $\tau$ of $\Omega_0$ such that
$$V_{\chi^{-1}}\otimes V_i\cong V_i\otimes V_{\chi^{-1}}\cong V_{\tau(i)}.$$
The inverse of $\tau$ is determined by
$$V_{\chi}\otimes V_i\cong V_i\otimes V_{\chi}\cong V_{\tau^{-1}(i)}.$$

Let $x$ be a variable and $V$ a $\Bbbk G$-module. For any non-negative integer $l$, let $x^lV$ be a vector space defined by $x^lu+x^lv=x^l(u+v)$ and $\lambda(x^lu)=x^l(\lambda u)$ for all $u,v\in V$ and $\lambda\in\Bbbk$. Then $x^lV$ has a $\Bbbk G$-module structure defined by
\begin{equation}\label{Eq}
s\cdot(x^lv)=\chi^{-l}(s)x^l(s\cdot v)
\end{equation}
for any $s\in G$ and $v\in V$. For any simple $\Bbbk G$-module $V_i$, it is clear that there are $\Bbbk G$-module isomorphisms $x^lV_i\cong V_{\chi^{-l}}\otimes V_i\cong V_{\tau^l(i)}$.

For any $i\in\Omega_0$ and $1\leqslant k\leqslant n$, we consider the following $\Bbbk G$-module
$$M(k,i):=V_i\oplus xV_i\oplus\cdots\oplus x^{k-1}V_i,$$
where each summand is a simple $\Bbbk G$-module defined by (\ref{Eq}). In order to make $M(k,i)$ become an $H$-module, we
define an action of $z$ on $M(k,i)$ as follows:
\begin{equation}\label{EQ}
z\cdot(x^lv)=\begin{cases}
x^{l+1}v, & 0\leqslant l\leqslant k-2,\\
0, & l=k-1
\end{cases}
\end{equation}
for any $v\in V_i$. Then $M(k,i)$ is an $H$-module and $M(1,i)=V_i$. We have the following proposition (see \cite[Theorem 2.5]{MR3284336}).
\begin{proposition} \cite[Theorem 2.5]{MR3284336}\label{2.2}
Let $H$ be a finite-dimensional pointed Hopf algebra of rank one of nilpotent type.
Then the set \{$M(k,i)\mid i\in\Omega_0,\,1\leqslant k\leqslant n$\} forms a complete set of finite-dimensional indecomposable $H$-modules up to isomorphism. And the set $\{M(1,i)\mid i\in \Omega_0\}$ forms a complete set of finite-dimensional simple $H$-modules up to isomorphism.
\end{proposition}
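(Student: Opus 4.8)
The plan is to show first that each $M(k,i)$ is indecomposable, then that the list is exhaustive, and finally that no two members of the list are isomorphic.

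\textbf{Indecomposability.} I would begin with the observation that $J = (z)$ is the Jacobson radical of $H$ and that $zH = Hz$ (this follows from the relations $zs = \chi(s)sz$ in \eqref{relation}), so $J^m = (z^m)$ for all $m$. For the module $M = M(k,i)$, the filtration $M \supseteq zM \supseteq z^2M \supseteq \cdots \supseteq z^{k-1}M \supseteq 0$ coincides with the radical filtration $J M \supseteq J^2M \supseteq \cdots$, and by \eqref{EQ} each layer $z^lM = x^lV_i$ is simple as a $\Bbbk G$-module (hence as an $H$-module, since $z$ acts on each single layer only by shifting into the next one). Thus $M$ has a unique composition series, its radical series, with simple layers $V_{\tau^l(i)}$ for $l=0,\dots,k-1$. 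A module with a unique composition series is indecomposable: if $M = A \oplus B$ nontrivially, then $\operatorname{soc}(M)$ would not be simple (it would contain $\operatorname{soc}(A)\oplus\operatorname{soc}(B)$), contradicting that the unique composition series forces $\operatorname{soc}(M) = z^{k-1}M$ to be simple. Alternatively, and perhaps more cleanly, I would compute $\operatorname{End}_H(M(k,i))$ and show it is local: an endomorphism must preserve the radical filtration and be $\Bbbk G$-linear on each layer, so by Schur's lemma it is a scalar on the top layer; chasing the $z$-action shows it is determined by that scalar up to a nilpotent part, giving $\operatorname{End}_H(M(k,i)) \cong \Bbbk[t]/(t^k)$, which is local, whence $M(k,i)$ is indecomposable.

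\textbf{Exhaustiveness.} Let $M$ be an arbitrary finite-dimensional indecomposable $H$-module. Since $z^n = \alpha(g^n-1) = 0$ in the nilpotent case, $z$ acts nilpotently on $M$, so the radical filtration of $M$ is $M \supseteq zM \supseteq \cdots \supseteq z^{n-1}M \supseteq z^nM = 0$. Restricting to $\Bbbk G$ (which is semisimple, as $\Bbbk$ has characteristic zero and $G$ is finite), $M\!\downarrow_{\Bbbk G}$ is a direct sum of simples, and the map $z \colon x^lV \to x^{l+1}V$ is $\Bbbk G$-linear in the twisted sense of \eqref{Eq}. The strategy is the standard one for modules over $\Bbbk[z]/(z^n)$ graded/twisted by a semisimple algebra: decompose $M$ as a $\Bbbk G$-module compatibly with the $z$-action, reducing to Jordan-type blocks. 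Concretely, pick a $\Bbbk G$-submodule complement to $zM$ in $M$, lift a $\Bbbk G$-decomposition through the powers of $z$, and conclude that $M$ decomposes as a direct sum of submodules each generated over $H$ by a single simple $\Bbbk G$-submodule $W \subseteq M$ on which the $z$-string $W, zW, z^2W, \dots$ has some length $k$ with $z^kW = 0$; such a summand is exactly $M(k,i)$ where $V_i \cong W$ and we use $x^lW \cong V_{\tau^l(i)}$. Indecomposability of $M$ then forces $M$ itself to be one such $M(k,i)$ with $1\le k\le n$.

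\textbf{Distinctness and the simple modules.} Two modules $M(k,i)$ and $M(k',i')$ have the same dimension only if $k\dim V_i = k'\dim V_{i'}$, and more precisely their radical layers are $V_{\tau^l(i)}$ versus $V_{\tau^l(i')}$; comparing the top layer ($l=0$) gives $V_i \cong V_{i'}$, i.e. $i = i'$, and comparing lengths gives $k = k'$. Finally, $M(k,i)$ is simple iff its radical filtration is trivial iff $k = 1$, giving the stated complete set $\{M(1,i)\mid i\in\Omega_0\}$ of simple $H$-modules, consistent with the earlier remark that simple $H$-modules coincide with simple $\Bbbk G$-modules via $H/J \cong \Bbbk G$. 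I expect the main obstacle to be the exhaustiveness step: one must carefully lift a $\Bbbk G$-module decomposition of $M$ through the non-surjective, non-injective twisted maps $z\colon z^lM\to z^{l+1}M$ so that the pieces assemble into genuine $H$-submodules, and getting the bookkeeping of the character twist $\chi^{-l}$ right (so that the $l$-th layer of a summand really is the module $x^lV_i$ of \eqref{Eq}) is where the real work lies. This is essentially the classification of finitely generated modules over the skew truncated polynomial setup, and I would cite or adapt \cite{MR3284336} for the precise form.
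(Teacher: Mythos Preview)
The paper supplies no proof of this proposition; it is quoted directly from \cite[Theorem~2.5]{MR3284336} and used as background, so there is no in-paper argument against which to compare yours.

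Your sketch is essentially the standard argument and is sound in outline, with two corrections worth noting. First, where you write ``each layer $z^lM = x^lV_i$'' you mean the successive quotient $z^lM/z^{l+1}M\cong x^lV_i$; the submodule $z^lM(k,i)$ itself is $\bigoplus_{j=l}^{k-1}x^jV_i$. With that fix your socle argument ($\operatorname{soc}M(k,i)=\ker z=x^{k-1}V_i$ is simple, hence $M(k,i)$ is indecomposable) is clean and correct. Second, the claim $\operatorname{End}_H(M(k,i))\cong\Bbbk[t]/(t^k)$ is false: since $g\in Z(G)$ acts on each simple $V_s$ by a nonzero scalar, evaluating the character identity $\chi_{\tau^d(i)}=\chi^{-d}\chi_i$ at $g$ shows that $\tau^d(i)=i$ forces $q^d=1$, i.e.\ $n\mid d$. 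Hence the $\Bbbk G$-modules $x^lV_i\cong V_{\tau^l(i)}$ for $0\le l\le k-1\le n-1$ are pairwise non-isomorphic, $M(k,i)$ is multiplicity-free over $\Bbbk G$, and one finds $\operatorname{End}_H(M(k,i))=\Bbbk$. This is of course still local, so your conclusion is unaffected; and in fact this multiplicity-freeness over $\Bbbk G$ makes both the exhaustiveness step (the twisted Jordan decomposition) and the distinctness step easier than you suggest, since the $\Bbbk G$-isotypic pieces already separate the $z$-layers.
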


{\bf Case 2}: The group datum $\mathcal{D}=(G,\chi,g,\alpha)$ is of non-nilpotent type, namely, $\chi^n=1$ and $\alpha(g^n-1)\neq0$, where $n$ is the order of $q=\chi(g)$. Without loss of generality, we may assume that $\alpha=1$ (see \cite[Corollary 1]{MR2236601}). Denote by $H$ the Hopf algebra $H_{\mathcal{D}}$ associated with the group datum $\mathcal{D}=(G,\chi,g,1)$. In this case, the relations in (\ref{relation}) become
$$z^n=g^n-1, \quad zs=\chi(s)sz \qquad \text{for}\ s\in G.$$
Note that the order of $q=\chi(g)$ is $n$. This implies that the order of $g$ in the group $G$ is $nr$, where $r>1$. Assume that $G$ has $p$ non-equivalent irreducible characters and $\Lambda=\{0,1,\cdots, p-1\}$.
Let \{$V_i\mid i\in\Lambda$\} be a complete set of simple $\Bbbk G$-modules up to isomorphism. Since $g^n\in Z(G)$, the action of $g^n$ on each $V_i$ as multiplied by a non-zero element of $\Bbbk$, say $\lambda_i$. Let $\Lambda_0=\{i\in\Lambda\mid\lambda_i=1\}$ and $\Lambda_1=\{i\in\Lambda\mid\lambda_i\neq1\}$.

Let $M(k,i):=V_i\oplus xV_i\oplus\cdots\oplus x^{k-1}V_i$ for $i\in\Lambda_0$ and $1\leqslant k\leqslant n$. Then $M(k,i)$ is an $H$-module where the action of each $s\in G$ on $M(k,i)$ is defined by (\ref{Eq}) and the action of $z$ on $M(k,i)$ is defined by (\ref{EQ}).

Let $P_j=V_j\oplus xV_j\oplus\cdots\oplus x^{n-1}V_j$ for $j\in\Lambda_1$. Then $P_j$ is an $H$-module, where the action of each $s\in G$ on $P_j$ is given similar to (\ref{Eq}) and $z$ acts on $P_j$ as follows:
\begin{equation}
z\cdot(x^lv)=\begin{cases}
x^{l+1}v, & 0\leqslant l\leqslant n-2,\\
(\lambda_j-1)v, & l=n-1
\end{cases}
\end{equation}
for any $v\in V_j$.

Let $V_\chi$ and $V_{\chi^{-1}}$ be two $1$-dimensional simple $\Bbbk G$-modules corresponding to the $\Bbbk$-linear character $\chi$ and $\chi^{-1}$ respectively. Similar to the case that $H$ is of nilpotent type, there is a unique permutation $\tau$ of the index $\Lambda$ determined by
$$V_{\chi^{-1}}\otimes V_t\cong V_t\otimes V_{\chi^{-1}}\cong V_{\tau(t)}$$
as $\Bbbk G$-modules for some $\tau(t)\in\Lambda$. Moreover, $\tau$ preserves $\Lambda_0$ and $\Lambda_1$ respectively. We have the following lemma (see \cite[Lemma 2.7]{MR3448167}).
\begin{lemma}\cite[Lemma 2.7]{MR3448167}\label{2.0}
For any $t\in \Lambda$ and $l\in \mathbb{Z}$, the following hold for the $\Bbbk G$-modules:
\begin{enumerate}
\item[(1)] $V_t\otimes V_{\chi}\cong V_{\chi}\otimes V_t\cong V_{\tau^{-1}(t)}.$
\item[(2)] $V_t\otimes V_{\chi^{-l}}\cong V_{\tau^{l}(t)}$.
\item[(3)] $xV_t\cong V_{\tau(t)}$. Moreover, $V_i\cong V_j$ if and only if $xV_i\cong xV_j$ for $i,j\in \Lambda$.
\item[(4)] The order of the permutation $\tau$ is $n$. Moreover, for any $t\in \Lambda$, $x^lV_t\cong V_t$ if and only if $l$ is divisible by $n$.
\end{enumerate}
\end{lemma}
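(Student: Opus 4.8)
The plan is to reduce all four statements to two elementary facts about $\Bbbk G$-modules. First, for a one-dimensional $\Bbbk G$-module $V_\psi$ attached to a linear character $\psi$ of $G$, the functor $-\otimes V_\psi$ is an exact autoequivalence of the category of $\Bbbk G$-modules, with quasi-inverse $-\otimes V_{\psi^{-1}}$, because $V_\psi\otimes V_{\psi^{-1}}\cong\Bbbk$; moreover $\Bbbk G$ is cocommutative, so the flip $v\otimes w\mapsto w\otimes v$ is a $\Bbbk G$-isomorphism and $V_t\otimes V_\psi\cong V_\psi\otimes V_t$ for every $t$. Second, the twisted module $x^lV_t$ of (\ref{Eq}) is an honest tensor product: comparing the prescribed action $s\cdot(x^lv)=\chi^{-l}(s)\,x^l(s\cdot v)$ with the diagonal action on $V_{\chi^{-1}}^{\otimes l}\otimes V_t\cong V_{\chi^{-l}}\otimes V_t$ shows that $x^lV_t\cong V_{\chi^{-l}}\otimes V_t$ as $\Bbbk G$-modules for every $l\in\mathbb{Z}$.

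Granting these, (3) is immediate: $xV_t\cong V_{\chi^{-1}}\otimes V_t\cong V_{\tau(t)}$ by the definition of $\tau$, and $V_i\cong V_j$ if and only if $xV_i\cong xV_j$ because $-\otimes V_{\chi^{-1}}$ is an autoequivalence (equivalently, because $\tau$ is a bijection of $\Lambda$). For (1), apply the defining relation of $\tau$ to the index $\tau^{-1}(t)$ to get $V_{\tau^{-1}(t)}\otimes V_{\chi^{-1}}\cong V_t$, tensor on the right with $V_\chi$, and collapse $V_{\chi^{-1}}\otimes V_\chi\cong\Bbbk$ on the left, obtaining $V_t\otimes V_\chi\cong V_{\tau^{-1}(t)}$; the isomorphism $V_t\otimes V_\chi\cong V_\chi\otimes V_t$ is the flip. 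For (2) I would induct, starting from $l=0$ where $V_t\otimes\Bbbk\cong V_t$: from $V_t\otimes V_{\chi^{-l}}\cong V_{\tau^l(t)}$ one gets $V_t\otimes V_{\chi^{-l-1}}\cong V_{\tau^l(t)}\otimes V_{\chi^{-1}}\cong V_{\tau^{l+1}(t)}$ by the definition of $\tau$, and $V_t\otimes V_{\chi^{-l+1}}\cong V_{\tau^l(t)}\otimes V_\chi\cong V_{\tau^{l-1}(t)}$ by (1), and together these cover all $l\in\mathbb{Z}$.

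For (4), I would first show that $\tau^l=\mathrm{id}$ on $\Lambda$ if and only if $\chi^l=\varepsilon$. If $\tau^l=\mathrm{id}$, applying it to the index of the trivial module gives $V_{\chi^{-l}}\cong\Bbbk$, i.e.\ $\chi^l=\varepsilon$; conversely $\chi^l=\varepsilon$ forces $V_{\chi^{-l}}\cong\Bbbk$, whence $\tau^l=\mathrm{id}$ by (2). Thus the order of $\tau$ equals the order of $\chi$ in the character group of $G$, and this order is $n$: it divides $n$ because $\chi^n=1$ by the hypothesis of \textbf{Case 2}, and it is at least $n$ because $\chi^k(g)=q^k\neq1$ for $0<k<n$, as $q=\chi(g)$ has order $n$. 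Finally, for a fixed $t\in\Lambda$, (2) gives $x^lV_t\cong V_{\tau^l(t)}$, so $x^lV_t\cong V_t$ if and only if $\tau^l(t)=t$, i.e.\ $V_{\chi^{-l}}\otimes V_t\cong V_t$; this holds whenever $n\mid l$ since then $\chi^{-l}=\varepsilon$. Conversely, since $g\in Z(G)$ acts on the simple module $V_t$ by a scalar $\mu_t\neq0$ (Schur's lemma, $\Bbbk$ being algebraically closed) and on $V_{\chi^{-l}}\otimes V_t$ by $q^{-l}\mu_t$, an isomorphism $V_{\chi^{-l}}\otimes V_t\cong V_t$ forces $q^{-l}=1$, hence $n\mid l$.

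The bookkeeping in the first two paragraphs is routine. The substantive points, which I expect to be the only delicate ones, are both inside (4): the computation of the order of $\tau$ is exactly where both hypotheses of \textbf{Case 2}---namely $\chi^n=1$ and the fact that $\chi(g)$ has order $n$---are used, and the converse half of the last assertion genuinely needs the centrality of $g$ together with Schur's lemma to see that twisting by $\chi^{-l}$ changes the $g$-eigenvalue of a simple module unless $n$ divides $l$.
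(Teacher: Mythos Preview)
Your proof is correct. The paper itself does not prove this lemma: it is merely quoted from \cite[Lemma 2.7]{MR3448167}, so there is no in-paper argument to compare against. Your approach is the natural one and would serve as a complete proof: parts (1)--(3) are unwinding the definition of $\tau$ together with the facts that $\Bbbk G$ is cocommutative and that $-\otimes V_\psi$ is an autoequivalence for any linear character $\psi$; the identification $x^lV_t\cong V_{\chi^{-l}}\otimes V_t$ is immediate from (\ref{Eq}). The only substantive step is (4), and you handle it correctly: the equality $\mathrm{ord}(\tau)=\mathrm{ord}(\chi)=n$ uses precisely the two hypotheses of \textbf{Case 2} ($\chi^n=1$ for the upper bound, $\mathrm{ord}(q)=n$ for the lower bound), and the Schur-lemma argument on the central element $g$ is exactly what is needed to upgrade ``$\tau$ has order $n$'' to the stronger statement that \emph{every} $\tau$-orbit has size $n$, which is what the ``moreover'' clause asserts.
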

Let $\langle \tau\rangle$ be the cyclic group generated by $\tau$. Then $\langle \tau\rangle$ acts on the sets $\Lambda_k$ for $k\in\{0,1\}$, respectively. Let $\sim$ be the equivalence relation on $\Lambda_k$: $i\sim j$ if and only if $i=\tau^m(j)$ for some $m\in\mathbb{N}$. Denote by $[i]$ the equivalence class of $i$. By Lemma \ref{2.0}, we have $|[i]|=n$ for any $i\in \Lambda_k$ and $|\Lambda_k|$ is divisible by $n$.

We have the following proposition (see \cite[Proposition 2.4, Proposition 2.8 and Theorem 2.9]{MR3448167}).
\begin{proposition}\cite[Proposition 2.4, Proposition 2.8 and Theorem 2.9]{MR3448167} Let $H$ be a finite-dimensional pointed Hopf algebra of rank one of non-nilpotent type. Then with notations as above, for any two $j,j^\prime\in\Lambda_1$, $P_j\cong P_{j^\prime}$ as $H$-modules if and only if $[j]=[j^\prime]$. The set $\{M(k,i),P_{[j]}\mid i\in \Lambda_0,\,1\leqslant k\leqslant n,\,j\in \Lambda_1\}$ forms a complete set of finite-dimensional indecomposable $H$-modules up to isomorphism. And the set $\{M(1,i),P_{[j]}\mid i\in \Lambda_0,\,j\in \Lambda_1\}$ forms a complete set of finite-dimensional simple $H$-modules up to isomorphism.
\end{proposition}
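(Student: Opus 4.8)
The plan is to stratify the finite-dimensional indecomposable $H$-modules according to how the central element $z^{n}=g^{n}-1$ acts. (It is central because $g^{n}\in Z(G)$ and $\chi^{n}=1$, so $z^{n}$ commutes with every $s\in G$ and with $z$.) If $M$ is a finite-dimensional indecomposable $H$-module, then $\operatorname{End}_{H}(M)$ is local, so the operator $z^{n}$ on $M$ is either invertible or nilpotent. I would treat these two cases separately: when $z^{n}$, hence $z$, acts invertibly on $M$ the outcome is the modules $P_{j}$ with $j\in\Lambda_{1}$, and when $z^{n}$, hence $z$, acts nilpotently on $M$ the outcome is the modules $M(k,i)$ with $i\in\Lambda_{0}$. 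In parallel I also have to verify that each $P_{j}$ and each $M(k,i)$ is indecomposable, that $P_{j}\cong P_{j'}$ iff $[j]=[j']$ while the $M(k,i)$ are pairwise non-isomorphic for distinct $(k,i)$, and that there are no isomorphisms between the two families (immediate, since $z$ is invertible on every $P_{j}$ and nilpotent on every $M(k,i)$). The assertion about simple modules then follows by checking which of these indecomposables are simple.

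For the invertible case, since $\Bbbk G$ is semisimple I decompose $M|_{\Bbbk G}=\bigoplus_{i\in\Lambda}M^{(i)}$ into isotypic components. From $sz=\chi(s)^{-1}zs$ and the definition \eqref{Eq} of the $x$-twist one checks, using Lemma \ref{2.0}(3), that $z\bigl(M^{(i)}\bigr)\subseteq M^{(\tau(i))}$; since $z$ is bijective this is an equality, so $\tau$ permutes the isotypic components with matching multiplicities. Hence each $N_{[j]}:=\bigoplus_{l}M^{(\tau^{l}(j))}$ is an $H$-submodule and $M$ is the direct sum of the $N_{[j]}$ over $\tau$-orbits; indecomposability forces $M=N_{[j]}$ for a single orbit, and since $z^{n}=g^{n}-1$ acts on $M^{(j)}$ as the scalar $\lambda_{j}-1$, invertibility of $z$ forces $\lambda_{j}\neq1$, i.e. $j\in\Lambda_{1}$. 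Writing $M=\bigoplus_{l=0}^{n-1}M_{l}$ with $M_{l}:=M^{(\tau^{l}(j))}\cong(x^{l}V_{j})^{m}$ and $z\colon M_{l}\xrightarrow{\sim}M_{l+1}$, the module is encoded (relative to chosen $\Bbbk G$-isomorphisms) by the tuple of connecting maps on the multiplicity spaces, $(A_{0},\dots,A_{n-1})\in GL_{m}(\Bbbk)^{n}$, modulo simultaneous conjugation $A_{l}\mapsto g_{l+1}A_{l}g_{l}^{-1}$, subject to the single constraint $A_{n-1}\cdots A_{0}=(\lambda_{j}-1)I_{m}$ coming from $z^{n}=g^{n}-1$. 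The choice $g_{0}=I$, $g_{l}=(A_{l-1}\cdots A_{0})^{-1}$ normalizes this tuple to $(I,\dots,I,(\lambda_{j}-1)I)$, which is exactly $P_{j}^{m}$, so $M\cong P_{j}^{m}$. Since $P_{j}\neq0$ is itself indecomposable — in fact simple, because $P_{j}|_{\Bbbk G}$ is multiplicity-free with summands $x^{l}V_{j}\cong V_{\tau^{l}(j)}$, $0\leqslant l\leqslant n-1$, pairwise distinct as $|[j]|=n$, which $z$ permutes cyclically and invertibly — Krull--Schmidt gives $m=1$, so $M\cong P_{j}$. Applying this with $M=P_{j'}$ for any $j'$ with $[j']=[j]$ yields $P_{j'}\cong P_{j}$, and the converse is clear from restricting to $\Bbbk G$.

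For the nilpotent case, $z^{n}=g^{n}-1$ is nilpotent while $g^{n}$ is semisimple (finite order, $\operatorname{char}\Bbbk=0$), so $g^{n}=1$ and $z^{n}=0$ on $M$; thus $M$ is a module over $H/(g^{n}-1)$. Using the presentation \eqref{relation} together with the evident surjection and the dimension count $\dim H_{\mathcal{D}'}=n|G|/r=\dim H/(g^{n}-1)$, one identifies $H/(g^{n}-1)$ with $H_{\mathcal{D}'}$ for the group datum $\mathcal{D}'=\bigl(G/\langle g^{n}\rangle,\bar{\chi},\bar{g},0\bigr)$, which is of \emph{nilpotent} type ($\bar{\chi}$ is well defined since $\chi(g^{n})=q^{n}=1$, and $\bar{\chi}(\bar{g})=q$ still has order $n$), with simple modules the $V_{i}$ for $i\in\Lambda_{0}$ and associated permutation again $\tau$. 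Proposition \ref{2.2} then lists the indecomposable $H/(g^{n}-1)$-modules as the $M(k,i)$, $i\in\Lambda_{0}$, $1\leqslant k\leqslant n$. Each $M(k,i)$ is uniserial, its submodules forming the chain $0\subset x^{k-1}V_{i}\subset x^{k-2}V_{i}\oplus x^{k-1}V_{i}\subset\cdots\subset M(k,i)$, hence indecomposable, with $M(1,i)=V_{i}$ simple and $M(k,i)$ non-simple for $k\geqslant2$; and distinct $(k,i)$ give non-isomorphic modules because the nilpotency index of $z$ on $M(k,i)$ equals $k$ and $\ker z=x^{k-1}V_{i}\cong V_{\tau^{k-1}(i)}$ recovers $i$. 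Combined with the simplicity of each $P_{j}$ established above, this yields the asserted complete lists of indecomposable and of simple $H$-modules.

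The step I expect to be the main obstacle is the invertible case — precisely, the passage from "$M$ is a cyclic chain of isotypic $\Bbbk G$-blocks with invertible connecting maps" to "$M\cong P_{j}^{m}$", i.e. proving that the cyclic product $\lambda_{j}-1$ of the connecting maps is a complete isomorphism invariant. The normalization $g_{l}=(A_{l-1}\cdots A_{0})^{-1}$ settles this, but one must carry the $\Bbbk G$-module identifications $x^{l}V_{j}\cong V_{\tau^{l}(j)}$ (Lemma \ref{2.0}) correctly and then invoke Krull--Schmidt to descend from $P_{j}^{m}$ to $P_{j}$. A secondary technical point is the isomorphism $H/(g^{n}-1)\cong H_{\mathcal{D}'}$, which rests on the above dimension count together with the surjection $H_{\mathcal{D}}\twoheadrightarrow H_{\mathcal{D}'}$.
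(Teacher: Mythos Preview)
The paper does not prove this proposition; it is quoted from \cite{MR3448167} as background, with no argument supplied here. So there is nothing in the present paper to compare your proof against.

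Your proposal is nonetheless a correct self-contained argument. The split on whether the central element $z^{n}=g^{n}-1$ acts invertibly or nilpotently on an indecomposable $M$ is exactly the right organizing principle. In the invertible branch your normalization of the cyclic tuple of connecting maps to $(I,\dots,I,(\lambda_{j}-1)I)$ is valid (an equivalent shortcut: choose a $\Bbbk G$-decomposition $M_{0}=\bigoplus_{a=1}^{m}V_{j}^{(a)}$ and observe that each $H$-submodule $\bigoplus_{l=0}^{n-1}z^{l}V_{j}^{(a)}$ is already a copy of $P_{j}$ via $z^{l}v\leftrightarrow x^{l}v$, the relation $z^{n}=g^{n}-1$ supplying the wrap-around scalar $\lambda_{j}-1$). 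In the nilpotent branch your identification $H/(g^{n}-1)\cong H_{\mathcal{D}'}$ for $\mathcal{D}'=(G/\langle g^{n}\rangle,\bar\chi,\bar g,0)$ goes through: the surjection is clear from the presentation \eqref{relation}, and since $|\langle g^{n}\rangle|=r$ both sides have dimension $n|G|/r$, so Proposition~\ref{2.2} applies and delivers exactly the $M(k,i)$ with $i\in\Lambda_{0}$. Both the uniseriality of $M(k,i)$ and the simplicity of $P_{j}$ rest on the $\Bbbk G$-summands $x^{l}V_{j}\cong V_{\tau^{l}(j)}$ being pairwise non-isomorphic for $0\leqslant l\leqslant n-1$, which is precisely Lemma~\ref{2.0}(4); you invoke it in the right places. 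One small point worth making explicit is that $\lambda_{\tau(i)}=\lambda_{i}$ (since $\chi(g^{n})=q^{n}=1$), so that $z^{n}$ really does act by the single scalar $\lambda_{j}-1$ on all of $N_{[j]}$, which is what forces $j\in\Lambda_{1}$ in the invertible case.
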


\section{Generators of all ideals}

\indent

In this section, $H$ is always a finite-dimensional pointed Hopf algebra of rank one. We denote $(a)$ the two-sided ideal of $H$ generated by $a$ for any $a\in H$ and  $\langle h\rangle$ the two-side ideal of $\Bbbk G$ for any $h\in \Bbbk G$. For the generators of any non-zero two-sided ideal of $H$, we have the following lemma (see \cite[Proposition 3.2]{MR4256338}).

\begin{lemma}\cite[Proposition 3.2]{MR4256338}\label{3.1}
Let $I$ be any non-zero two-sided ideal of $H$. Then there exist integers $t>0$, $0\leqslant m_1,m_2,\cdots,m_t\leqslant n-1$ and $0\neq h_i\in\Bbbk G$ for $1\leqslant i\leqslant t$, such that
$$I=(z^{m_1}h_1,z^{m_2}h_2,\cdots,z^{m_t}h_t).$$
\end{lemma}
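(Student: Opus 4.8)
The plan is to use the $\mathbb{Z}_n$-grading of $H$ coming from the basis $\{z^l h \mid 0 \le l \le n-1,\ h \in G\}$. Write $H = \bigoplus_{l=0}^{n-1} H_l$ where $H_l = z^l \Bbbk G$; note that each $H_l$ is an $(\Bbbk G, \Bbbk G)$-bimodule, and by \eqref{eequ2.4} we have $z^l h = \sigma^l(h) z^l$, so the left and right $\Bbbk G$-module structures on $H_l$ differ only by the automorphism $\sigma^l$. The key point I would establish first is that the grading is compatible with the ideal structure in the following weak sense: although a general ideal $I$ need not be homogeneous, I can still extract from $I$ a workable set of homogeneous generators. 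Concretely, for each $l$ let $\pi_l : H \to H_l$ be the projection onto the degree-$l$ component, and consider $L_l := \pi_l(I) \subseteq H_l$, or better, work with the "leading term" component. Actually the cleaner route: show that $I$ is generated by finitely many elements each of which, after using the relations $zs = \chi(s)sz$ to move all $z$'s to the left, is a $\Bbbk$-linear combination $\sum_l z^l h_l$ with $h_l \in \Bbbk G$; then argue we may split these into homogeneous pieces.

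The main technical step is the splitting. First I would observe that $I \cap H_0 = I \cap \Bbbk G$ is a two-sided ideal of $\Bbbk G$; since $\Bbbk G$ is a finite-dimensional semisimple algebra (here I use $\Bbbk$ algebraically closed of characteristic zero), every ideal of $\Bbbk G$ is generated by a single central idempotent, in particular by a single element $h_0$. More generally, I would show that for each $l$, the set $I_l := \{ h \in \Bbbk G \mid z^l h \in I + (z^{l+1} \text{ and higher degree terms}) \}$ — precisely, the image of $\pi_l$ restricted to elements of $I$ supported in degrees $\ge l$ — is a two-sided ideal of $\Bbbk G$, using that $z^l$ intertwines left multiplication by $s$ and right multiplication by $\sigma^l$-twisted elements, and that $\sigma$ permutes the two-sided ideals of $\Bbbk G$. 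Since $\Bbbk G$ is semisimple, each such $I_l$ is generated by one element $h_l \in \Bbbk G$. I then take the candidate generating set $\{ z^l h_l \mid 0 \le l \le n-1,\ h_l \ne 0 \}$, discarding the zero ones, which gives integers $m_1 < m_2 < \cdots < m_t$ (so in particular $t \le n$) and nonzero $h_i \in \Bbbk G$.

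Finally I would prove $I = (z^{m_1} h_1, \ldots, z^{m_t} h_t)$ by a descending induction on the top degree. The inclusion $\supseteq$ requires checking each $z^{m_i} h_i \in I$, which follows by construction together with a downward induction removing higher-degree tails (using that the tail of $z^{m_i}h_i$, if any, lies in $I$ and is supported in strictly higher degree, hence by induction lies in the ideal generated by the later generators). For $\subseteq$: given $a \in I$, let $l_0$ be its lowest nonzero degree; then $\pi_{l_0}(a) = z^{l_0} h$ with $h \in I_{l_0} = \langle h_{l_0} \rangle_{\Bbbk G}$, so $h = \sum_j u_j h_{l_0} v_j$ for $u_j, v_j \in \Bbbk G$, and using \eqref{eequ2.4} to rewrite, $z^{l_0} h = \sum_j u_j' (z^{l_0} h_{l_0}) v_j'$ lies in our ideal; subtracting off a suitable element of the ideal with the same degree-$l_0$ component, we reduce $a$ to an element of $I$ of strictly higher bottom degree, and induct. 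The hard part will be the bookkeeping in the splitting step — making precise the sense in which "$I_l$ is an ideal of $\Bbbk G$" when $I$ is not homogeneous, and verifying that the twist by $\sigma^l$ (which sends the generator $h_l$ to $\sigma^l(h_l)$, possibly a different generator of the same ideal after applying the permutation $\sigma$ induces on central idempotents) does not obstruct the argument. Semisimplicity of $\Bbbk G$ is what makes everything go through cleanly, since it guarantees one-element generation at each graded level.
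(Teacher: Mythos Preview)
The paper does not supply its own proof of Lemma~\ref{3.1}; it simply quotes \cite[Proposition~3.2]{MR4256338}. So there is nothing to compare against directly, and I will just assess your argument on its merits.

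Your outline has a genuine gap in the step where you claim each $z^{m_i}h_i\in I$. You write that this ``follows by construction together with a downward induction removing higher-degree tails (using that the tail of $z^{m_i}h_i$, if any, lies in $I$\ldots)''. But by your definition of $I_l$, all you know is that there exists $a\in I$ with $a=z^{m_i}h_i+T$ where $T$ is supported in degrees $>m_i$. Asserting $T\in I$ is exactly the same as asserting $z^{m_i}h_i\in I$, so the argument is circular. The downward induction does not rescue this: the inductive hypothesis says $z^{j}h_j\in I$ for $j>m_i$, but the homogeneous components $z^{j}g_j$ of the tail $T$ have no a~priori reason to satisfy $g_j\in I_j=\langle h_j\rangle$, because $T$ itself is not known to lie in $I$. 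The same circularity infects the $I\subseteq J$ direction, where subtracting $z^{l_0}g$ from $a\in I$ produces an element that need not stay in $I$.

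The missing idea is that every two-sided ideal of $H$ is already \emph{homogeneous} for the decomposition $H=\bigoplus_{l=0}^{n-1}z^l\Bbbk G$, and this comes for free from the relation $zg=qgz$. Since $g\in Z(G)$, conjugation by $g$ fixes $\Bbbk G$ and sends $z^l h\mapsto q^{-l}z^l h$; as $I$ is two-sided, $g^{m}ag^{-m}\in I$ for every $a=\sum_l z^l a_l\in I$ and every $m$. The Vandermonde matrix $(q^{-ml})_{0\le m,l\le n-1}$ is invertible (because $q$ has order~$n$), so each component $z^l a_l$ lies in $I$. Once you know this, set $J_l=\{h\in\Bbbk G:z^l h\in I\}$; this is a two-sided ideal of $\Bbbk G$ (your twist computation via \eqref{eequ2.4} is exactly what checks this), and $I=\bigoplus_l z^lJ_l$. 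Any finite generating set of each nonzero $J_l$ then gives the required generators $z^{m_i}h_i$. Note also that semisimplicity of $\Bbbk G$ is not needed here: finite-dimensionality already gives finite generation of each $J_l$. Semisimplicity only enters in the next step (Lemma~\ref{3.2}), where one wants a \emph{single} central idempotent generator.
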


\begin{lemma}\label{3.2}
For $h_1,h_2\in \Bbbk G$ and $0\leqslant m\leqslant n-1$, we have that $(z^mh_1,z^mh_2)=(z^mh)$ for some $h\in Z(\Bbbk G)$ and $h^2=h$.
\end{lemma}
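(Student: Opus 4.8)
The plan is to transfer the problem to the group algebra $\Bbbk G$, which is semisimple because $\operatorname{char}\Bbbk=0$, and to use the standard fact that every two-sided ideal of a semisimple algebra is generated by a central idempotent. Write $I=(z^mh_1,z^mh_2)=(z^mh_1)+(z^mh_2)$ and let $J=\langle h_1\rangle+\langle h_2\rangle$, a two-sided ideal of $\Bbbk G$. Decomposing $\Bbbk G=\bigoplus_j B_j$ into its simple two-sided summands, we have $J=\bigoplus_{j\in S}B_j$ for some subset $S$, and I would set $h:=\sum_{j\in S}1_{B_j}$. Then $h\in Z(\Bbbk G)$, $h^2=h$, $J=\Bbbk G\,h=h\,\Bbbk G=\langle h\rangle$, and $h$ acts as a two-sided identity on $J$; in particular $h\in J$ and $hh_i=h_ih=h_i$ for $i=1,2$.

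First I would check $I\subseteq(z^mh)$. Since $h_i=hh_i$, we get
$$z^mh_i=z^m(hh_i)=(z^mh)h_i\in(z^mh)\qquad(i=1,2),$$
so $I\subseteq(z^mh)$.

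Conversely, I would check $(z^mh)\subseteq I$. The only computational ingredient is the commutation rule $z^ma=\sigma^m(a)z^m$ for $a\in\Bbbk G$ from \eqref{eequ2.4}, where $\sigma$ is the automorphism of $\Bbbk G$ induced by $\chi$. For any $a,b\in\Bbbk G$,
$$z^m(ah_ib)=(z^ma)h_ib=\sigma^m(a)\,(z^mh_i)\,b\in(z^mh_i),$$
and since $\langle h_i\rangle$ is the $\Bbbk$-span of elements of the form $ah_ib$ with $a,b\in\Bbbk G$, it follows that $z^mc\in(z^mh_i)$ for every $c\in\langle h_i\rangle$. As $h\in J=\langle h_1\rangle+\langle h_2\rangle$, writing $h=c_1+c_2$ with $c_i\in\langle h_i\rangle$ gives $z^mh=z^mc_1+z^mc_2\in(z^mh_1)+(z^mh_2)=I$, hence $(z^mh)\subseteq I$. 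Combining the two inclusions yields $I=(z^mh)$ with $h\in Z(\Bbbk G)$ and $h^2=h$, as required.

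I do not expect a serious obstacle here. The two points that need a little care are the appeal to semisimplicity of $\Bbbk G$ (legitimate precisely because $\operatorname{char}\Bbbk=0$) in order to realize $J$ as a principal ideal on a central idempotent, and the bookkeeping of the twist $\sigma$ when moving $z^m$ across elements of $\Bbbk G$. Note that neither the relation $z^n=\alpha(g^n-1)$ nor the Hopf structure is used, so the argument is uniform over both the nilpotent and the non-nilpotent type.
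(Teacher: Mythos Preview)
Your proof is correct and follows essentially the same approach as the paper: both pass to the two-sided ideal $\langle h_1,h_2\rangle$ of the semisimple algebra $\Bbbk G$, pick a central idempotent generator $h$, and verify the two inclusions using the commutation rule $z^ma=\sigma^m(a)z^m$ from \eqref{eequ2.4}. The only cosmetic differences are that you construct $h$ explicitly via the block decomposition (the paper simply cites the standard fact) and that you use $h_i=hh_i$ rather than the paper's $h_1=ah$ for the inclusion $I\subseteq(z^mh)$.
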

\begin{proof}
Since $\Bbbk G$ is semisimple, it follows from \cite[Exercise 3.2.3]{MR0674652} that any two-sided ideal of $\Bbbk G$ is generated by a central idempotent element. Namely, $\langle h_1, h_2\rangle=\langle h\rangle$, where $h\in Z(\Bbbk G)$ and $h^2=h$. Hence there exists an $a\in \Bbbk G$ such that $h_1=ah=ha$. Therefore we have $z^mh_1=z^mha\in (z^mh).$ The proof of $z^mh_2\in (z^mh)$ is similar. Hence $(z^mh_1,z^mh_2)\subseteq(z^mh).$
For the converse inclusion, we suppose that
$$h=\sum_i(c_ih_1d_i+e_ih_2f_i)$$
for $c_i,d_i,e_i,f_i\in \Bbbk G$. Then $$z^mh=\sum_iz^m(c_ih_1d_i+e_ih_2f_i)
=\sum_i(\sigma^m(c_i)z^mh_1d_i+\sigma^m(e_i)z^mh_2f_i)\in(z^mh_1,z^mh_2).$$ This implies that $(z^mh)\subseteq(z^mh_1,z^mh_2)$. Hence $(z^mh_1,z^mh_2)=(z^mh)$.
\end{proof}

\begin{proposition}\label{3.3}
Let $I$ be any non-zero two-sided ideal of $H$. There exist integers $1\leqslant t\leqslant n, n-1\geqslant m_1>m_2>\cdots>m_t\geqslant0$, $d_i\in Z(\Bbbk G), d_i^2=d_i$ for $1\leqslant i\leqslant t$, and $\langle d_1\rangle\supsetneq \langle d_2\rangle\supsetneq\cdots\supsetneq \langle d_t\rangle\supsetneq \langle 0\rangle$ such that
$$I=(z^{m_1}d_1,z^{m_2}d_2,\cdots,z^{m_t}d_t)=(z^{m_1}d_1+z^{m_2}d_2+\cdots+z^{m_t}d_t).$$
\end{proposition}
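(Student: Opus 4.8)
The plan is to start from the presentation of $I$ given by Lemma \ref{3.1}, namely $I=(z^{m_1}h_1,\dots,z^{m_t}h_t)$ with $0\le m_i\le n-1$ and $0\ne h_i\in\Bbbk G$, and to massage it into the normal form in two stages: first collapse repeated exponents using Lemma \ref{3.2}, then arrange the idempotents into a descending chain, and finally replace the list of generators by a single generator. First I would group the generators by the value of the exponent $m_i$: for each fixed exponent $m$ occurring among the $m_i$, repeatedly apply Lemma \ref{3.2} (in the obvious $r$-fold form, proved by induction on the number of generators sharing that exponent) to combine all $z^m h$-type generators with that exponent into a single $z^m e$ with $e\in Z(\Bbbk G)$, $e^2=e$. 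After this step we may assume the exponents $m_1>m_2>\cdots>m_t$ are strictly decreasing, each paired with a central idempotent $e_i$, and $t\le n$ since there are only $n$ possible exponents $0,\dots,n-1$.

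The second stage is to force the chain $\langle e_1\rangle\supseteq\langle e_2\rangle\supseteq\cdots$. The key observation is the \emph{absorption} relation: if $i<j$ (so $m_i>m_j$), then
$$
z^{m_i}e_i = z^{m_i-m_j}\,z^{m_j}e_i = z^{m_i-m_j}\,z^{m_j}e_j\,(\text{something})\ ?
$$
— more precisely, I claim $z^{m_j}e_j\in I$ lets us replace $e_i$ by $e_i+e_j-e_ie_j$, the idempotent generating $\langle e_i\rangle+\langle e_j\rangle$. Indeed $z^{m_i}(e_i+e_j-e_ie_j)=z^{m_i}e_i+z^{m_i}(1-e_i)e_j$, and $z^{m_i}(1-e_i)e_j=\sigma^{m_i-m_j}\big((1-e_i)\big)\,z^{m_i-m_j}\,z^{m_j}e_j$ lies in $(z^{m_j}e_j)\subseteq I$ by \eqref{eequ2.4} (note $\sigma$ is an algebra automorphism of $\Bbbk G$ fixing the set of central idempotents only up to the $\chi$-twist, so I should instead argue $z^{m_i-m_j}\cdot z^{m_j}e_j = z^{m_i-m_j}z^{m_j}e_j\in I$ and multiply on the left by any element of $\Bbbk G$). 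Working from $i=t$ upward and enlarging $e_1,\dots,e_{t-1}$ in turn, I get central idempotents $d_1,\dots,d_t$ with $\langle d_1\rangle\supseteq\cdots\supseteq\langle d_t\rangle\supseteq\langle 0\rangle$ and $(z^{m_1}d_1,\dots,z^{m_t}d_t)=I$; discarding any repetitions in the chain (if $\langle d_i\rangle=\langle d_{i+1}\rangle$ then $z^{m_i}d_i$ already absorbs, via the same computation, so it is redundant and can be dropped) makes the chain strictly descending, and nonzeroness of $I$ together with $h_i\ne0$ keeps $\langle d_t\rangle\supsetneq\langle 0\rangle$.

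For the single-generator claim, set $w=z^{m_1}d_1+\cdots+z^{m_t}d_t$. Clearly $(w)\subseteq I$. Conversely I must extract each $z^{m_i}d_i$ from $w$. Using \eqref{eequ2.4}, for any $s\in G$ we have $s^{-1}w = \sum_i s^{-1}z^{m_i}d_i = \sum_i \chi^{-m_i}(s)\,z^{m_i}\,s^{-1}d_i$; since $d_i$ is central, $s^{-1}d_i = d_i s^{-1}$, so $s^{-1}w s=\sum_i\chi^{-m_i}(s)z^{m_i}d_i$, i.e.\ left-right conjugation by $G$ acts on the $i$-th summand by the scalar $\chi^{-m_i}(s)=q^{-m_i\cdot(\text{stuff})}$. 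More usefully, conjugating by $g$ multiplies the $i$-th term by $\chi(g)^{-m_i}=q^{-m_i}$, and since $0\le m_t<\cdots<m_1\le n-1$ the exponents $q^{-m_i}$ are pairwise distinct ($q$ has order $n$). Hence the $t$ elements $g^{-k}w g^{k}=\sum_i q^{-km_i}z^{m_i}d_i$, $k=0,\dots,t-1$, are obtained from $(z^{m_1}d_1,\dots,z^{m_t}d_t)$ by an invertible Vandermonde transformation, so each $z^{m_i}d_i$ is a $\Bbbk$-linear combination of elements of $(w)$; thus $I\subseteq(w)$ and equality holds. I expect the main obstacle to be the bookkeeping in the second stage — keeping careful track of how $\sigma=\sigma_\chi$ twists the central idempotents when sliding $z$ past elements of $\Bbbk G$, and verifying that the successive enlargements $e_i\mapsto d_i$ genuinely terminate in a descending chain rather than cycling; everything else is the semisimplicity of $\Bbbk G$ plus a Vandermonde argument.
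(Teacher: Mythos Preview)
Your proposal is correct and, for the first equality, follows essentially the same route as the paper: start from Lemma~\ref{3.1}, collapse equal exponents via Lemma~\ref{3.2}, then enlarge each idempotent to absorb those with smaller exponent (the paper phrases this as adjoining the redundant generator $z^{m_{i}}d_{i+1}$ and reapplying Lemma~\ref{3.2}, which produces exactly your join $d_i=e_i+d_{i+1}-e_id_{i+1}$), and finally discard repeated links in the chain. Your parenthetical worry about $\sigma$-twisting is unnecessary: all you need is
\[
z^{m_i}(1-e_i)d_{i+1}=z^{m_i}d_{i+1}(1-e_i)=z^{m_i-m_{i+1}}\cdot\bigl(z^{m_{i+1}}d_{i+1}\bigr)\cdot(1-e_i)\in(z^{m_{i+1}}d_{i+1}),
\]
which uses only that $d_{i+1}$ and $1-e_i$ commute in $\Bbbk G$; no commutation of central idempotents past $z$ is required.

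For the second equality the paper gives no argument, simply referring to \cite[Theorem~3.4]{MR4256338}. Your Vandermonde argument via conjugation by $g$ supplies a self-contained proof and is presumably what that reference does. One minor correction: from $zg=qgz$ one obtains $g^{-1}z^{m_i}g=q^{m_i}z^{m_i}$, so the eigenvalue is $q^{m_i}$ rather than $q^{-m_i}$; this is harmless, since the $q^{m_i}$ are still pairwise distinct (the $m_i$ are distinct in $\{0,\dots,n-1\}$ and $q$ has order $n$), and the Vandermonde system remains invertible.
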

\begin{proof} For the first equality, by Lemma \ref{3.1} we have $I=(z^{m_1}h_1,z^{m_2}h_2,\cdots,z^{m_t}h_t)$, where $0\neq h_i\in \Bbbk G$ for $1\leqslant i\leqslant t.$ Without loss of generality, we may assume that $n-1\geqslant m_1\geqslant m_2\geqslant\cdots\geqslant m_t\geqslant 0$. If $m_i=m_{i+1}$ for some $1\leqslant i\leqslant t-1$, by Lemma \ref{3.2} we have that $(z^{m_i}h_i,z^{m_{i+1}}h_{i+1})=(z^{m_i}h)$ for some $h\in Z(\Bbbk G)$ and $h^2=h$. Therefore we can replace the generators $z^{m_i}h_i$ and $z^{m_{i+1}}h_{i+1}$ of $I$ by $z^{m_i}h$. Since the number of generators of $I$ is finite, we can
continue the above steps finite times until these numbers $m_i$ are all different. Since
$$(z^{m_i}h_i)=(z^{m_i}h_i, z^{m_i}0)=(z^{m_i}h^{\prime}),$$
where $h^{\prime}\in Z(\Bbbk G)$ and $h^{\prime 2}=h^{\prime}\neq 0$, we may suppose that $h_i\in Z(\Bbbk G)$ and $h_i^2=h_i\neq 0$ for $1\leqslant i\leqslant t$. Denote by $d_t:=h_t$. Noting that $m_{t-1}>m_t$, we have
$$z^{m_{t-1}}d_t\in (z^{m_t}d_t)\subseteq (z^{m_{t-1}}h_{t-1}, z^{m_t}d_t).$$
Hence
\begin{align*}
I&=(z^{m_1}h_1,\cdots,z^{m_{t-1}}h_{t-1},z^{m_t}d_t)\\
&=(z^{m_1}h_1,\cdots,z^{m_{t-2}}h_{t-2},z^{m_{t-1}}h_{t-1},z^{m_{t-1}}d_t,z^{m_t}d_t)\\
&=(z^{m_1}h_1,\cdots,z^{m_{t-2}}h_{t-2})+(z^{m_{t-1}}h_{t-1},z^{m_{t-1}}d_t)+(z^{m_t}d_t)\\
&=(z^{m_1}h_1,\cdots,z^{m_{t-2}}h_{t-2})+(z^{m_{t-1}}d_{t-1})+(z^{m_t}d_t)\\
&=(z^{m_1}h_1,\cdots,z^{m_{t-2}}h_{t-2},z^{m_{t-1}}d_{t-1},z^{m_t}d_t),
\end{align*}
where $d_{t-1}\in Z(\Bbbk G)$, $d_{t-1}^2=d_{t-1}$ and $\langle d_{t-1}\rangle=\langle h_{t-1},d_t\rangle\supseteq \langle d_t \rangle$.
We continue the above steps until we get
$$I=(z^{m_1}d_1,z^{m_2}d_2,\cdots z^{m_t}d_t),$$
where $\langle d_1\rangle\supseteq \langle d_2\rangle\supseteq \cdots \supseteq \langle d_t\rangle, d_i\in Z(\Bbbk G)$ and $d_i^2=d_i$ for $1\leqslant i\leqslant t$.
If $\langle d_i\rangle=\langle d_{i+1}\rangle$ for some $1\leqslant i\leqslant t-1$, then $d_i=cd_{i+1}=d_{i+1}c$ for some $c\in \Bbbk G$. Thus,
$$(z^{m_i}d_i,z^{m_{i+1}}d_{i+1})=(z^{m_{i}}d_{i+1}c, z^{m_{i+1}}d_{i+1})=(z^{m_{i+1}}d_{i+1}).$$
We can remove the redundant generator $z^{m_i}d_i$. Since $I$ is finitely generated, the above steps could only continue finite times. Therefore we may assume that
$$\langle d_1\rangle\supsetneq \langle d_2\rangle\supsetneq\cdots\supsetneq \langle d_t\rangle\supsetneq \langle 0\rangle.$$
We have proven the first equality. The proof of the second equality is similar to that of \cite[Theorem 3.4]{MR4256338}.
\end{proof}

\begin{lemma}\label{3.4}
Let $H$ be of nilpotent type. Then the following ideals are all different:
$$(h_0),\quad (zh_1),\quad \cdots,\quad (z^{n-1}h_{n-1}),$$
where $0\neq h_i\in \Bbbk G$ for $0\leqslant i\leqslant n-1$.
\end{lemma}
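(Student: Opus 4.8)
The plan is to equip $H$ with its natural $z$-adic filtration coming from the basis $\{z^{l}h\mid 0\leqslant l\leqslant n-1,\ h\in G\}$ and to attach to each ideal in the list the numerical invariant ``smallest power of $z$ occurring in a nonzero element''; this invariant will turn out to be exactly the exponent appearing in the corresponding generator, forcing the $n$ ideals to be pairwise distinct. Concretely, every $w\in H$ is written uniquely as $w=\sum_{l=0}^{n-1}z^{l}w_{l}$ with $w_{l}\in\Bbbk G$, and for $w\neq0$ I set $\nu(w):=\min\{\,l\mid w_{l}\neq0\,\}$. Since $H$ is of nilpotent type we have $\alpha=0$, hence $z^{n}=\alpha(g^{n}-1)=0$ by \eqref{relation}; and by \eqref{eequ2.4} we have $z^{m}h=\sigma^{m}(h)z^{m}$, so any element of $\Bbbk G$ can be shuttled past any power of $z$ by applying a power of the automorphism $\sigma$ of $\Bbbk G$, without changing that power of $z$.

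The key step is the containment: for every $0\neq h\in\Bbbk G$ and $0\leqslant m\leqslant n-1$,
\[
(z^{m}h)\subseteq z^{m}\Bbbk G\oplus z^{m+1}\Bbbk G\oplus\cdots\oplus z^{n-1}\Bbbk G,
\]
equivalently, $\nu(w)\geqslant m$ for every nonzero $w\in(z^{m}h)$; in the notation of the radical this just says $(z^{m}h)\subseteq J^{m}$. To prove it I would use that $(z^{m}h)$ is spanned by the elements $a z^{m}h b$ with $a,b\in H$: expanding $a$ and $b$ in the basis and using the commutation relation to collect all powers of $z$ on the left, every resulting term has the form $z^{l_{1}+m+l_{2}}(\text{element of }\Bbbk G)$ with $l_{1},l_{2}\geqslant0$, which lies in the displayed subspace (it is simply $0$ once $l_{1}+m+l_{2}\geqslant n$, since $z^{n}=0$). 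On the other hand $z^{m}h_{m}\in(z^{m}h_{m})$ is already in normal form and, because $h_{m}\neq0$, satisfies $\nu(z^{m}h_{m})=m$ exactly. Hence
\[
m=\min\{\,\nu(w)\mid 0\neq w\in(z^{m}h_{m})\,\},
\]
so the integer $m$ is an invariant of the ideal $(z^{m}h_{m})$ itself. Therefore $(z^{i}h_{i})=(z^{j}h_{j})$ forces $i=j$, which shows that the $n$ ideals $(h_{0}),(zh_{1}),\dots,(z^{n-1}h_{n-1})$ are pairwise distinct.

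The one point that needs care — and precisely the place where the nilpotent-type hypothesis is indispensable — is the containment step: one must check that moving $\Bbbk G$-elements across powers of $z$ never lowers the $z$-degree. This is exactly what fails in the non-nilpotent case, where $z^{n}=g^{n}-1$ is a nonzero element of $\Bbbk G$ that feeds back into degree $0$ (so, e.g., $z^{n}\in(z)$ has $\nu=0$); in the nilpotent case that term vanishes and the filtration argument goes through unobstructed. Everything else is routine bookkeeping with the PBW-type basis of $H$, together with the fact (used for $\nu(z^{m}h_{m})=m$) that $h_{m}\neq0$.
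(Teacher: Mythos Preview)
Your proposal is correct and takes essentially the same approach as the paper: both arguments rest on the observation that, in the nilpotent case, every element of $(z^{m}h_{m})$ lies in $z^{m}H$, so the minimal $z$-exponent appearing in a nonzero element of $(z^{m}h_{m})$ is exactly $m$. The paper's proof is a two-line version of your filtration argument (it simply says each element of $(z^{i}h_{i})$ has the form $z^{i}u$ and hence $z^{k}h_{k}\notin(z^{l}h_{l})$ for $k<l$); your write-up makes the underlying valuation $\nu$ and the role of $z^{n}=0$ explicit, but the substance is identical.
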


\begin{proof}
Note that for $0\leqslant i\leqslant n-1$ each element of $(z^ih_i)$ is of the form $z^iu$, $u\in H$. If $0\leqslant k<l\leqslant n-1$, then $z^kh_k\notin (z^lh_l)$. Hence $(z^kh_k)\neq (z^lh_l)$.
\end{proof}

\begin{proposition}\label{3.5}
Let $H$ be of nilpotent type. If
$$(z^{l_1}h_1+\cdots+z^{l_t}h_t)=(z^{m_1}a_1+\cdots+z^{m_r}a_r),$$
where $n-1\geqslant l_1>\ldots>l_t\geqslant 0$, $n-1\geqslant m_1>\cdots>m_r\geqslant 0$, $h_i, a_j\in \Bbbk G$, $1\leqslant i\leqslant t$, $1\leqslant j\leqslant r$, $h_t\neq0,$ $a_r\neq0$, then $l_t=m_r$ and $\langle h_t \rangle=\langle a_r \rangle$.
\end{proposition}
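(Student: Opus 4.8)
The plan is to filter $H$ by powers of $z$ and read off the bottom graded piece. Since $H$ is of nilpotent type, $z^n=0$, and the basis $\{z^lh\mid 0\le l\le n-1,\ h\in G\}$ gives a $\Bbbk$-vector space decomposition $H=\bigoplus_{l=0}^{n-1}z^l\Bbbk G$; call an element of $z^l\Bbbk G$ homogeneous of degree $l$, and for $u\in H$ let $[u]_l\in\Bbbk G$ denote its degree-$l$ coefficient, so $u=\sum_{l=0}^{n-1}z^l[u]_l$. From the relation $z^mh=\sigma^m(h)z^m$ of (\ref{eequ2.4}) one obtains $hz^m=z^m\sigma^{-m}(h)$ for $h\in\Bbbk G$, and hence $(z^ac)(z^bd)=z^{a+b}\sigma^{-b}(c)\,d$ for $c,d\in\Bbbk G$ (which is $0$ once $a+b\ge n$). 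Iterating, a product of homogeneous elements of degrees $a$ and $b$ is homogeneous of degree $a+b$; so multiplication is compatible with the grading.

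Put $w=z^{l_1}h_1+\cdots+z^{l_t}h_t$ and $w'=z^{m_1}a_1+\cdots+z^{m_r}a_r$. As the exponents within each sum are pairwise distinct, $[w]_{l_t}=h_t\ne0$ and $[w]_l=0$ for $l<l_t$, and likewise $[w']_{m_r}=a_r\ne0$ and $[w']_l=0$ for $l<m_r$. First I would show $l_t=m_r$. From $(w)=(w')$ we get $w'\in HwH$, so $w'=\sum_i u_i w v_i$ with finitely many $u_i,v_i\in H$. Expanding $u_iwv_i=\sum_{a,j,b}(z^a[u_i]_a)(z^{l_j}h_j)(z^b[v_i]_b)$, each summand is homogeneous of degree $a+l_j+b\ge l_j\ge l_t$; hence $[u_iwv_i]_l=0$ for $l<l_t$, and therefore $[w']_l=0$ for all $l<l_t$. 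This forces $m_r\ge l_t$, and the symmetric argument from $w\in(w')$ gives $l_t\ge m_r$, so $l_t=m_r=:m$.

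Next I would compare degree-$m$ coefficients. In the expansion of $u_iwv_i$ the only triple $(a,j,b)$ with $a+l_j+b=m$ is $(0,t,0)$, since $l_j\ge l_t=m$; that summand equals $[u_i]_0\,z^mh_t\,[v_i]_0=z^m\,\sigma^{-m}([u_i]_0)\,h_t\,[v_i]_0$. Taking degree-$m$ coefficients of $w'=\sum_i u_iwv_i$ therefore yields
$$a_r=[w']_m=\sum_i\sigma^{-m}([u_i]_0)\,h_t\,[v_i]_0\in\langle h_t\rangle,$$
so $\langle a_r\rangle\subseteq\langle h_t\rangle$; the symmetric computation gives $\langle h_t\rangle\subseteq\langle a_r\rangle$, and hence $\langle h_t\rangle=\langle a_r\rangle$.

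I do not anticipate a real obstacle: every step is elementary bookkeeping in the given basis. The two points requiring care are that the lowest nonzero homogeneous component of $w$ (resp.\ $w'$) really is $h_t$ (resp.\ $a_r$), with no cancellation — which is precisely where $l_1>\cdots>l_t$ and $h_t,a_r\ne0$ are used — and that the grading is genuinely multiplicative, which relies on $z^n=0$ and thus fails in the non-nilpotent case, consistently with the hypothesis. Equivalently one could argue through the associated graded object $(z^m)/(z^{m+1})$, an $H$-bimodule on which $H$ acts through $\Bbbk G\cong H/(z)$, identifying the degree-$m$ part of $(w)$ with $z^m\langle h_t\rangle$; but I would present the self-contained component-extraction version above.
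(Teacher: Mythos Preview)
Your proof is correct and follows essentially the same approach as the paper: the paper's observation that every element of the ideal is of the form $z^{l_t}v$ is precisely your statement that the grading by powers of $z$ is multiplicative, and the paper then reads off the degree-$m_r$ coefficient $h_t=\sum_s\sigma^{-m_r}(p_{ns})a_rq_{ns}$ exactly as you extract $a_r=\sum_i\sigma^{-m}([u_i]_0)\,h_t\,[v_i]_0$. Your write-up is somewhat more explicit about why the grading is well behaved, but the underlying argument is identical.
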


\begin{proof}
Note that each element of $(z^{l_1}h_1+\cdots+z^{l_t}h_t)$ is of the form $z^{l_t}v$, $v\in H$. If $l_t>m_r$, then $z^{m_r}a_r\notin (z^{l_1}h_1+\cdots+z^{l_t}h_t)$, which is a contradiction. Therefore, $l_t\leqslant m_r$. Similarly we can prove $m_r\leqslant l_t$. Hence $l_t=m_r$. We may suppose
$$z^{l_1}h_1+\cdots+z^{l_t}h_t=\sum_s(z^{n-1}p_{1s}+\cdots+p_{ns})(z^{m_1}a_1+\cdots+z^{m_r}a_r)(z^{n-1}q_{1s}+\cdots+q_{ns}),$$
where $p_{is}, q_{is}\in \Bbbk G$, $1\leqslant i\leqslant n$. Since $\{z^ih\mid 0\leqslant i\leqslant n-1,\, h\in G\}$ is a $\Bbbk$-basis of $H$, it follows
$$h_t=\sum_s \sigma^{-m_r}(p_{ns})a_rq_{ns}.$$
Thus, $h_t\in \langle a_r\rangle$. That is to say, $\langle h_t\rangle\subseteq \langle a_r\rangle$. Similarly, we can prove $\langle a_r\rangle\subseteq \langle h_t\rangle$. Hence $\langle h_t\rangle=\langle a_r\rangle$.
\end{proof}

In the following, we use the primitive central orthogonal idempotents of $\Bbbk G$ to describe the ideals of $H$. Let $\{\chi_i\mid 0\leqslant i\leqslant p-1\}$ be a complete set of irreducible characters of $G$ and
\begin{equation*}
e_i=\frac{\chi_i(1)}{|G|}\sum_{h\in G}\chi_i(h)h^{-1} \quad \text{for}\,\, 0\leqslant i\leqslant p-1.
\end{equation*}
Then $\{e_i\mid 0\leqslant i\leqslant p-1\}$ is a complete set of primitive central orthogonal idempotents of $\Bbbk G$ and a $\Bbbk$-basis of $Z(\Bbbk G)$. If $h\in Z(\Bbbk G)$ and $h^2=h$, then we may assume that $h=\alpha_0e_0+\alpha_1e_1+\cdots+\alpha_{p-1}e_{p-1}$, $\alpha_i\in \{0,1\}$ for $0\leqslant i\leqslant p-1$. We have the following corollary.

\begin{corollary}\label{3.7}
Let $H$ be of nilpotent type. If
\begin{align*}
&(z^{l_1}(\alpha_0^{(1)}e_0+\cdots+\alpha_{p-1}^{(1)}e_{p-1})+\cdots+z^{l_t}(\alpha_0^{(t)}e_0+\cdots+\alpha_{p-1}^{(t)}e_{p-1}))\\
=&(z^{m_1}(\beta_0^{(1)}e_0+\cdots+\beta_{p-1}^{(1)}e_{p-1})+\cdots+z^{m_r}(\beta_0^{(r)}e_0+\cdots+\beta_{p-1}^{(r)}e_{p-1})),
\end{align*}
where $n-1\geqslant l_1>\ldots>l_t\geqslant 0$, $n-1\geqslant m_1>\cdots>m_r\geqslant 0$, $\alpha_i^{(j)}, \beta_i^{(s)}\in \{0,1\}$, $0\leqslant i\leqslant p-1$, $1\leqslant j\leqslant t$, $1\leqslant s\leqslant r$, $\alpha_0^{(t)}+\cdots+\alpha_{p-1}^{(t)}\neq0$, $\beta_0^{(r)}+\cdots+\beta_{p-1}^{(r)}\neq0$, then $l_t=m_r$ and $\alpha_i^{(t)}=\beta_i^{(r)}$ for $0\leqslant i\leqslant p-1$.
\end{corollary}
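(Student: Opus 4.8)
The plan is to deduce this directly from Proposition \ref{3.5} together with the block structure of the semisimple algebra $\Bbbk G$. First I would write $h_t=\alpha_0^{(t)}e_0+\cdots+\alpha_{p-1}^{(t)}e_{p-1}$ and $a_r=\beta_0^{(r)}e_0+\cdots+\beta_{p-1}^{(r)}e_{p-1}$, and observe that the hypotheses $\alpha_0^{(t)}+\cdots+\alpha_{p-1}^{(t)}\neq0$ and $\beta_0^{(r)}+\cdots+\beta_{p-1}^{(r)}\neq0$ say precisely that $h_t\neq0$ and $a_r\neq0$. Thus the hypotheses of Proposition \ref{3.5} are satisfied for the given equality of ideals, and it yields at once that $l_t=m_r$ and $\langle h_t\rangle=\langle a_r\rangle$. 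It therefore remains only to convert the equality $\langle h_t\rangle=\langle a_r\rangle$ of ideals of $\Bbbk G$ into the equality of scalars $\alpha_i^{(t)}=\beta_i^{(r)}$.

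For this second step I would use that $\Bbbk G$ is semisimple, so that the decomposition $\Bbbk G=\bigoplus_{i=0}^{p-1}\Bbbk G e_i$ (each $\Bbbk G e_i$ a two-sided ideal, since $e_i\in Z(\Bbbk G)$) is the decomposition of $\Bbbk G$ into its minimal two-sided ideals, and hence every two-sided ideal of $\Bbbk G$ has the form $\bigoplus_{i\in S}\Bbbk G e_i$ for a uniquely determined subset $S\subseteq\{0,\dots,p-1\}$; this is the same fact, from \cite[Exercise 3.2.3]{MR0674652}, already invoked in Lemma \ref{3.2}. Since $e_ie_j=\delta_{ij}e_i$, one has $e_ih_t=\alpha_i^{(t)}e_i$, so $e_i\in\langle h_t\rangle$ whenever $\alpha_i^{(t)}=1$; combining this with the obvious inclusion $h_t\in\bigoplus_{i:\,\alpha_i^{(t)}=1}\Bbbk G e_i$ gives $\langle h_t\rangle=\bigoplus_{i:\,\alpha_i^{(t)}=1}\Bbbk G e_i$, and likewise $\langle a_r\rangle=\bigoplus_{i:\,\beta_i^{(r)}=1}\Bbbk G e_i$. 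By the uniqueness of the index set, $\langle h_t\rangle=\langle a_r\rangle$ forces $\{i:\alpha_i^{(t)}=1\}=\{i:\beta_i^{(r)}=1\}$, that is, $\alpha_i^{(t)}=\beta_i^{(r)}$ for all $0\leqslant i\leqslant p-1$.

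I do not expect a genuine obstacle here, since the corollary is essentially a repackaging of Proposition \ref{3.5} in the coordinates provided by the primitive central orthogonal idempotents. The only point requiring any care is the elementary structural fact that, over the semisimple algebra $\Bbbk G$, distinct subsets $S$ yield distinct ideals $\bigoplus_{i\in S}\Bbbk G e_i$ and that the ideal generated by the idempotent $\sum_{i\in S}e_i$ is exactly this direct sum; this is standard and can be cited rather than reproved.
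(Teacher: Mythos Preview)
Your proposal is correct and follows essentially the same approach as the paper, which simply writes ``It is clear by Proposition \ref{3.5}.'' You have merely made explicit the implicit step that $\langle h_t\rangle=\langle a_r\rangle$ forces equality of the coefficient vectors via the block decomposition of the semisimple algebra $\Bbbk G$, which the paper leaves to the reader.
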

\begin{proof}
It is clear by Proposition \ref{3.5}.
\end{proof}

\begin{corollary}\label{3.8}
Let $H$ be of nilpotent type. Then for any $0\leqslant m\leqslant n-1$, the following $2^p-1$ ideals are all different
\begin{gather*}
(z^m),\quad (z^me_0),\quad \cdots, \quad (z^me_{p-1}),\\
(z^m(e_0+e_1)), \quad (z^m(e_0+e_2)), \cdots,\quad (z^m(e_{p-2}+e_{p-1})),\\
\cdots,\quad \cdots\\
(z^m(e_0+e_1+\cdots+e_{p-2})), \quad \cdots,\quad (z^m(e_1+e_2+\cdots+e_{p-1})).
\end{gather*}
\end{corollary}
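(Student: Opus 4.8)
The plan is to deduce Corollary~\ref{3.8} directly from Corollary~\ref{3.7} by specializing to the case where there is a single generator on each side, i.e. $t=r=1$ with $l_1=m_1=m$. First I would fix $0\leqslant m\leqslant n-1$ and list the $2^p-1$ sums of the form $d=\sum_{i\in S}e_i$ as $S$ ranges over the non-empty subsets of $\{0,1,\ldots,p-1\}$; each such $d$ lies in $Z(\Bbbk G)$ with $d^2=d$ and $d\neq 0$, so $(z^m d)$ is a well-defined two-sided ideal of $H$. The claim is that two such ideals $(z^m d)$ and $(z^m d')$ with $d=\sum_{i\in S}e_i$, $d'=\sum_{i\in S'}e_i$ are equal only if $S=S'$.

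Next I would invoke Corollary~\ref{3.7} with $l_1=m_1=m$ (single term on each side), $\alpha_i^{(1)}=1$ iff $i\in S$, and $\beta_i^{(1)}=1$ iff $i\in S'$. The hypotheses $\alpha_0^{(1)}+\cdots+\alpha_{p-1}^{(1)}\neq 0$ and $\beta_0^{(1)}+\cdots+\beta_{p-1}^{(1)}\neq 0$ hold precisely because $S$ and $S'$ are non-empty. Corollary~\ref{3.7} then yields $\alpha_i^{(1)}=\beta_i^{(1)}$ for all $0\leqslant i\leqslant p-1$, which says exactly that $S=S'$, hence $d=d'$. Therefore distinct non-empty subsets give distinct ideals, and since there are $2^p-1$ non-empty subsets of a $p$-element set, we get $2^p-1$ pairwise distinct ideals, which is what the corollary lists (with $(z^m)=(z^m(e_0+e_1+\cdots+e_{p-1}))$ corresponding to the full subset, as $e_0+\cdots+e_{p-1}=1$).

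I do not anticipate a genuine obstacle here; the statement is essentially a bookkeeping consequence of Corollary~\ref{3.7}, which in turn rests on Proposition~\ref{3.5}. The only point that warrants a sentence of care is making explicit that every element of $Z(\Bbbk G)$ that is idempotent is a $\{0,1\}$-combination of the primitive central orthogonal idempotents $e_i$ (already noted in the paragraph preceding Corollary~\ref{3.7}), so that the list genuinely enumerates \emph{all} ideals of the form $(z^m h)$ with $h\in Z(\Bbbk G)$, $h^2=h\neq 0$, rather than just a sub-family; and to observe that the top entry $(z^m)$ coincides with $(z^m\cdot 1)=(z^m(e_0+\cdots+e_{p-1}))$ so that the count $2^p-1$ is exactly right. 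With these remarks in place the proof is one line: apply Corollary~\ref{3.7}.

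\begin{proof}
Fix $0\leqslant m\leqslant n-1$. Each of the listed ideals has the form $(z^m d_S)$ with $d_S=\sum_{i\in S}e_i$ for a non-empty subset $S\subseteq\{0,1,\ldots,p-1\}$, where $d_S\in Z(\Bbbk G)$, $d_S^2=d_S$ and $d_S\neq 0$; here the case $S=\{0,1,\ldots,p-1\}$ gives $d_S=1$, i.e. $(z^m)$. There are exactly $2^p-1$ such subsets. Suppose $(z^m d_S)=(z^m d_{S'})$ for non-empty $S,S'$. Writing $d_S=\alpha_0 e_0+\cdots+\alpha_{p-1}e_{p-1}$ and $d_{S'}=\beta_0 e_0+\cdots+\beta_{p-1}e_{p-1}$ with $\alpha_i,\beta_i\in\{0,1\}$, and noting that $\alpha_0+\cdots+\alpha_{p-1}\neq 0$ and $\beta_0+\cdots+\beta_{p-1}\neq 0$ since $S,S'$ are non-empty, Corollary \ref{3.7} (applied with $t=r=1$ and $l_1=m_1=m$) gives $\alpha_i=\beta_i$ for all $0\leqslant i\leqslant p-1$, that is, $S=S'$. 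Hence the $2^p-1$ listed ideals are pairwise distinct.
\end{proof}
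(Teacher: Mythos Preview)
Your proof is correct and follows essentially the same approach as the paper: both apply Corollary~\ref{3.7} in the special case $t=r=1$ with $l_1=m_1=m$ to conclude that equality of the ideals forces equality of the $\{0,1\}$-coefficients. Your write-up is somewhat more detailed (introducing the subset notation and noting that $(z^m)$ corresponds to the full subset), but the argument is the same one-line deduction from Corollary~\ref{3.7}.
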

\begin{proof}
By Corollary \ref{3.7}, it follows that if
$$(z^m(l_0e_0+\cdots+l_{p-1}e_{p-1}))=(z^m(r_0e_0+\cdots+r_{p-1}e_{p-1})),$$
where $l_i, r_i\in \{0, 1\}$ for $0\leqslant i\leqslant p-1$, $l_0+\cdots+l_{p-1}\neq0$, $r_0+\cdots+r_{p-1}\neq0$, then $l_i=r_i$ for $0\leqslant i\leqslant p-1$. Hence we finish the proof.
\end{proof}

\section{Annihilator ideals of indecomposable modules}
\indent

In this section, we use the primitive central orthogonal idempotents of $\Bbbk G$ to describe the annihilator ideals of indecomposable $H$-modules.
Since $V_s$ is a simple $\Bbbk G$-module, we suppose that $V_s=\Bbbk G\cdot v_s$ for any non-zero $v_s\in V_s$. Through direct calculations we have $e_m\cdot v_s=\delta_{m,s}v_s$ for $0\leqslant m,s\leqslant p-1$, where $\delta_{m,s}$ is Kronecker symbol. Since $g\in Z(G)$, by Schur's lemma, $g$ acts on $V_s$ as multiplied by a non-zero scalar $\gamma_s$. We have the following lemma.

\begin{lemma}\label{4.1}
$z^{l+1}\in (z^l(1-e_s))$ for $0\leqslant s\leqslant p-1$ and $0\leqslant l\leqslant n-1$.
\end{lemma}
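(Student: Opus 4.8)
The goal is to show $z^{l+1} \in (z^l(1-e_s))$. Since the ideal $(z^l(1-e_s))$ contains $z^l(1-e_s) \cdot h$ and $h' \cdot z^l(1-e_s)$ for all $h, h' \in H$, the natural move is to multiply $z^l(1-e_s)$ on the left by $z$ and try to recover $z^{l+1}$ modulo things already in the ideal. We compute $z \cdot z^l(1-e_s) = z^{l+1}(1-e_s) = z^{l+1} - z^{l+1}e_s$. So it suffices to show $z^{l+1}e_s \in (z^l(1-e_s))$, because then $z^{l+1} = z\cdot z^l(1-e_s) + z^{l+1}e_s$ lies in the ideal.

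**Key step: handling $z^{l+1}e_s$.** Using the commutation relation $zs = \chi(s)sz$, equivalently $z^m h = \sigma^m(h) z^m$ from (\ref{eequ2.4}), we have $z^{l+1}e_s = \sigma^{l+1}(e_s) z^{l+1}$. Now $\sigma$ permutes the primitive central idempotents $e_0, \dots, e_{p-1}$: indeed $\sigma$ is the algebra automorphism of $\Bbbk G$ induced by $\chi$, and since $\chi$ is a linear character, tensoring by $V_\chi$ permutes the simple modules via $\tau$ (as in Section 2), so $\sigma(e_s) = e_{\tau^{\pm 1}(s)}$ — in any case $\sigma(e_s)$ is again one of the $e_j$, and hence so is $\sigma^{l+1}(e_s)$, say $\sigma^{l+1}(e_s) = e_{s'}$. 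Crucially $e_{s'}$ is idempotent and $e_{s'}(1-e_s)$ equals $e_{s'}$ if $s' \neq s$ and $0$ if $s' = s$; but we do not want the second case, so instead I will use that $z^{l+1}e_s = z \cdot (z^l e_s)$ and write $z^l e_s$ in terms of $z^l(1-e_s)$ and $z^l$. The cleaner route: $z^l e_s = z^l - z^l(1-e_s)$, hence $z^{l+1}e_s = z^{l+1} - z^{l+1}(1-e_s) = z \cdot z^l - z\cdot z^l(1-e_s)$. This only reduces $z^{l+1}e_s$ to $z^{l+1}$ again, which is circular.

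**The correct reduction.** Instead I multiply by an idempotent on the other side. We have $z^l(1-e_s) \cdot z = (1-e_s) \sigma^{-1}(?)\,$... better to stay with left multiplication but first produce $z^l e_j$ for $j \neq s$ inside the ideal. Note $(1-e_s) = \sum_{j \neq s} e_j$, so $z^l(1-e_s) = \sum_{j\neq s} z^l e_j$, and multiplying on the right by $e_j$ (using that the $e_j$ are orthogonal central idempotents of $\Bbbk G \subseteq H$) gives $z^l(1-e_s) e_j = z^l e_j \in (z^l(1-e_s))$ for each $j \neq s$. Therefore $z^{l+1} e_j = z \cdot z^l e_j \in (z^l(1-e_s))$ for all $j \neq s$, i.e. $z^{l+1}(1-e_s) \in (z^l(1-e_s))$. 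It remains to capture $z^{l+1} e_s$. Here I use the permutation: $z^{l+1} e_s = \sigma^{l+1}(e_s) z^{l+1} = e_{s'} z^{l+1}$ where $s' = \tau^{\mp(l+1)}(s)$ or similar. Since $n \geq 2$ and $\chi \neq \varepsilon$, the permutation $\tau$ is nontrivial; but it could still fix $s$. When $s' \neq s$: $e_{s'} z^{l+1} = z^{l+1} e_{s'}$? No — $e_{s'} z^{l+1} = z^{l+1} \sigma^{-(l+1)}(e_{s'}) = z^{l+1} e_s$, consistent but not helpful directly. Instead: $e_{s'} z^{l+1} = e_{s'} \cdot z^{l+1}$, and we can left-multiply $z^l e_{s'}$? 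We have $z^l e_{s'} \in (z^l(1-e_s))$ provided $s' \neq s$, and then $z \cdot z^l e_{s'} = z^{l+1} e_{s'}$... but I need $e_{s'} z^{l+1}$, not $z^{l+1} e_{s'}$. Using $e_{s'} z^{l+1} = z^{l+1}\sigma^{-(l+1)}(e_{s'})$, and $z^{l+1}\sigma^{-(l+1)}(e_{s'})$ — left multiply $z^l \sigma^{-(l+1)}(e_{s'})$ by $z$; this lies in the ideal iff $\sigma^{-(l+1)}(e_{s'}) = e_{s}$, which is exactly the bad case.

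**Resolving the obstacle.** The honest statement is: $z^{l+1} = z^{l+1}(1-e_s) + z^{l+1}e_s$, the first summand is in the ideal by the orthogonality argument above, and for the second I use $z^{l+1}e_s = z \cdot z^l(1-e_{s''}) \cdot (\text{something})$ — actually the slickest finish uses the relation (\ref{relation}) $z^n = 0$ in the nilpotent case (which is the hypothesis, as we are in Section 4 where $H$ is of nilpotent type): if $l = n-1$ then $z^{l+1} = z^n = 0 \in (z^l(1-e_s))$ trivially, and if $l \leq n-2$ then... I would rather argue: $z^{l+1} e_s = z^{l+1}e_s$, and left-multiplying $z^l(1-e_s)$ by $z e_{s'}$ where $e_{s'} = \sigma^{l}(\text{appropriate idempotent})$ — I will choose the multiplier $\lambda \in \Bbbk G$ with $\sigma^{\,l}(\lambda) e_s$... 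The main obstacle, which I expect to occupy most of the proof, is precisely this: showing $z^{l+1}e_s \in (z^l(1-e_s))$ when $\sigma^{l+1}$ happens to fix the index $s$. The resolution is to left-multiply $z^l(1-e_s)$ by $z e_j$ for a suitable $j$ with $\sigma(e_j) \neq e_s$ but $e_j(1-e_s) = e_j$, obtaining $z e_j \cdot z^l(1-e_s) = z \sigma(e_j) z^l (1-e_s) = z^{l+1}\sigma^{l+1}(e_j)(1-e_s)$, and then sum over $j \neq s$: $\sum_{j \neq s} z^{l+1}\sigma^{l+1}(e_j) = z^{l+1}\sigma^{l+1}(1 - e_s) = z^{l+1}(1 - \sigma^{l+1}(e_s)) = z^{l+1}(1 - e_{s'})$. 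Combined with $z \cdot z^l(1-e_s) = z^{l+1}(1-e_s) = z^{l+1} - z^{l+1}e_s$ inside the ideal, adding gives $2z^{l+1} - z^{l+1}e_s - z^{l+1}e_{s'} \in (z^l(1-e_s))$; iterating with the orbit $s, s', s'', \dots$ of $\tau$ acting on the index, and using that summing $z^{l+1}e_j$ over the whole orbit reconstructs a scalar multiple of $z^{l+1}\sum_{j \in \text{orbit}} e_j$ which is itself already shown to be in the ideal, one solves a finite linear system and concludes $z^{l+1} \in (z^l(1-e_s))$. In writing this up I would streamline by working over one $\langle\tau\rangle$-orbit at a time and noting the orbit has length dividing $n$, so the linear system is triangular and invertible over $\Bbbk$ (characteristic zero).
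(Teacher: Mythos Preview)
Your reduction is sound up to a point: you correctly observe that $z^l(1-e_s)e_j = z^l e_j$ for each $j\neq s$, hence $z^{l+1}e_j = z\cdot z^l e_j \in (z^l(1-e_s))$ for every $j\neq s$, and that it remains only to show $z^{l+1}e_s\in(z^l(1-e_s))$. But from there the argument breaks down. The computation of $ze_j\cdot z^l(1-e_s)$ is miswritten (you use $ze_j = z\sigma(e_j)$ rather than $ze_j=\sigma(e_j)z$), and the proposed ``orbit and linear system'' finish is both vague and circular: you never actually produce $z^{l+1}(1-e_{s'})$ in the ideal for any new index $s'$, and the claim that $z^{l+1}\sum_{j\in\mathrm{orbit}(s)}e_j$ is ``already shown to be in the ideal'' presupposes precisely the term $z^{l+1}e_s$ you are trying to capture.

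The missing step is short: multiply on the \emph{right} by $z$. From $hz=z\sigma^{-1}(h)$ one gets
\[
z^l(1-e_s)\cdot z \;=\; z^{l+1}\bigl(1-\sigma^{-1}(e_s)\bigr)\;=\;z^{l+1}\bigl(1-e_{\tau^{-1}(s)}\bigr)\;=\;z^{l+1}-z^{l+1}e_{\tau^{-1}(s)}\in(z^l(1-e_s)).
\]
Now $\tau^{-1}(s)\neq s$: if $\tau(s)=s$ then $V_{\chi^{-1}}\otimes V_s\cong V_s$, so $\chi^{-1}(g)\chi_s(g)=\chi_s(g)$; since $g\in Z(G)$ acts on $V_s$ by a nonzero scalar, $\chi_s(g)\neq 0$, forcing $\chi(g)=1$, contradicting $n\geq 2$. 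Hence $z^{l+1}e_{\tau^{-1}(s)}$ is among the terms already placed in the ideal, and $z^{l+1}\in(z^l(1-e_s))$ follows.

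The paper takes an entirely different route. Rather than manipulating idempotents directly, it notes that $g-\gamma_s\in Z(\Bbbk G)$ annihilates $V_s$, so $g-\gamma_s\in\langle 1-e_s\rangle$; then from the commutator identity
\[
z(g-\gamma_s)-(g-\gamma_s)z=(q-1)gz,
\]
with $q\neq 1$ and $g$ invertible, it deduces $z\in(g-\gamma_s)\subseteq(1-e_s)$. Finally it lifts to $z^{l+1}\in(z^l(1-e_s))$ by writing $z$ as a two-sided combination over $(1-e_s)$ and pushing the extra factor $z^l$ through via the automorphism $\sigma^l$. The paper's argument is uniform in $s$ and never touches the permutation $\tau$, at the price of invoking the specific group-like element $g$; your (corrected) approach stays purely at the level of idempotents and $\tau$, but needs the fixed-point-free observation above, which you did not supply.
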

\begin{proof}
Note that $g-\gamma_s\in Z(\Bbbk G)$ and $\{e_s\mid 0\leqslant s\leqslant p-1\}$ is a $\Bbbk$-basis of $Z(\Bbbk G)$. We may assume that
$$g-\gamma_s=k_0e_0+k_1e_1+\cdots+k_{p-1}e_{p-1},$$
where $k_i\in \Bbbk, 0\leqslant i\leqslant p-1$. Noticing that $(g-\gamma_s)\cdot v_s=0$, we have
$$(k_0e_0+\cdots+k_{p-1}e_{p-1})\cdot v_s=k_sv_s=0.$$
It follows $k_s=0$. Hence we have
$$g-\gamma_s\in (e_0,\cdots, e_{s-1}, e_{s+1}, \cdots, e_{p-1})=(e_0+\cdots+e_{s-1}+e_{s+1}+\cdots+e_{p-1})=(1-e_s).$$
Since $z(g-\gamma_s)-(g-\gamma_s)z=(q-1)gz\in(g-\gamma_s)$ and $g$ is invertible, it follows $z\in (g-\gamma_s)\subseteq (1-e_s)$. Hence we suppose
$$z=\sum_a(z^{n-1}h_{1a}+\cdots+h_{na})(1-e_s)(z^{n-1}h_{1a}^{\prime}+\cdots+h_{na}^{\prime}),$$
where $h_{ia},h_{ia}^{\prime}\in \Bbbk G$, $1\leqslant i\leqslant n$. Thus,
\begin{align*}
z^{l+1}&=z^l\sum_a(z^{n-1}h_{1a}+\cdots+h_{na})(1-e_s)(z^{n-1}h_{1a}^{\prime}+\cdots+h_{na}^{\prime})\\
&=\sum_a(z^{n-1}\sigma^l(h_{1a})+\cdots+\sigma^l(h_{na}))z^l(1-e_s)(z^{n-1}h_{1a}^{\prime}+\cdots+h_{na}^{\prime}).
\end{align*}
Therefore we finish the proof.
\end{proof}

\begin{lemma}\label{101}
For $0\leqslant s\leqslant p-1$ and $0\leqslant l\leqslant n-1$, we have $z^le_s=e_{\tau^l(s)}z^l$. When $H$ is of nilpotent type (or non-nilpotent type), we have
\begin{equation*}
e_s\cdot(x^lv_i)=\delta_{s, \tau^l(i)}x^lv_i \quad \text{for}\,\, i\in \Omega_0\  (\text{or}\ i\in\Lambda_0).
\end{equation*}
When $H$ is of non-nilpotent type, we have
\begin{equation*}
e_{s}\cdot (x^lv_j)=\delta_{s, \tau^l(j)}x^lv_j \quad \text{for}\,\, j\in \Lambda_1.
\end{equation*}
\end{lemma}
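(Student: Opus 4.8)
The plan is to establish the commutation relation $z^l e_s = e_{\tau^l(s)} z^l$ first, and then deduce the action formulas from it together with the already-known facts that $e_m \cdot v_i = \delta_{m,i} v_i$ and $x^l V_i \cong V_{\tau^l(i)}$.

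For the commutation relation, recall from \eqref{eequ2.4} that $z^m h = \sigma^m(h) z^m$ for $h \in \Bbbk G$, where $\sigma$ is the algebra automorphism of $\Bbbk G$ induced by $\chi$. Since $e_s \in Z(\Bbbk G)$ is a primitive central idempotent, $\sigma^l(e_s)$ is again a primitive central idempotent (automorphisms permute them), so $\sigma^l(e_s) = e_{\rho(s)}$ for some permutation $\rho$ of $\Omega_0$ (resp. $\Lambda$). The point is to identify $\rho$ with $\tau^l$. Applying both sides of $\sigma(e_s) = e_{\rho(s)}$ to the structure of simple modules: on the $\Bbbk G$-module $x V_i \cong V_{\chi^{-1}} \otimes V_i \cong V_{\tau(i)}$, one computes that $\sigma$ twisting the character by $\chi$ corresponds precisely to the permutation $\tau^{-1}$ on indices in the way the paper has set up $\tau$; more directly, I would use the defining relation \eqref{Eq}, namely $s \cdot (x^l v) = \chi^{-l}(s) x^l(s\cdot v)$, to see how $e_s$ acts on $x^l V_i$ and match. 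Since $e_s$ acts on $x^l V_i$ as the projection onto the isotypic component of type $V_s$, and $x^l V_i \cong V_{\tau^l(i)}$, we get $e_s \cdot (x^l v_i) = \delta_{s, \tau^l(i)} x^l v_i$; feeding this back, $z^l e_s \cdot (x^m v_i) = z^l \delta_{s,\tau^m(i)} x^m v_i = \delta_{s,\tau^m(i)} x^{l+m} v_i$ (when $l+m$ is within range) while $e_{\tau^l(s)} z^l \cdot (x^m v_i) = \delta_{\tau^l(s), \tau^{l+m}(i)} x^{l+m} v_i = \delta_{s, \tau^m(i)} x^{l+m} v_i$, which forces the two operators to agree on enough modules to conclude $z^l e_s = e_{\tau^l(s)} z^l$ in $H$. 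Alternatively, and more cleanly, I would prove $z e_s = e_{\tau(s)} z$ directly from $z e_s = \sigma(e_s) z$ by checking $\sigma(e_s) = e_{\tau(s)}$ on a faithful family of modules, then iterate.

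Once $z^l e_s = e_{\tau^l(s)} z^l$ is in hand, the two action formulas are immediate. For $i \in \Omega_0$ (nilpotent case) or $i \in \Lambda_0$ (non-nilpotent case), the module $M(k,i)$ has $x^l v_i = z^l \cdot v_i$ for $0 \le l \le k-1$ by \eqref{EQ}, so
\begin{equation*}
e_s \cdot (x^l v_i) = e_s z^l \cdot v_i = z^l e_{\tau^{-l}(s)} \cdot v_i = z^l \delta_{\tau^{-l}(s), i} v_i = \delta_{s, \tau^l(i)} x^l v_i,
\end{equation*}
using $e_m \cdot v_i = \delta_{m,i} v_i$ and rewriting the Kronecker delta. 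For $j \in \Lambda_1$ the module $P_j$ likewise has $x^l v_j = z^l \cdot v_j$ for $0 \le l \le n-1$ (the action of $z$ on the top is only nonzero at $l = n-1$ but the identification $x^l v_j = z^l v_j$ still holds for $l \le n-1$), and the same computation gives $e_s \cdot (x^l v_j) = \delta_{s, \tau^l(j)} x^l v_j$.

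The main obstacle is the correct bookkeeping in identifying $\sigma^l(e_s)$ with $e_{\tau^l(s)}$ — i.e. checking that the permutation of primitive central idempotents induced by the twist $\sigma$ is exactly $\tau$ (and not $\tau^{-1}$) with the paper's conventions. This is where I would be careful: the cleanest route is to pin down $\tau$ via its action on simple modules using Lemma~\ref{2.0}(3) ($x V_t \cong V_{\tau(t)}$) together with the explicit formula \eqref{Eq} for the $\Bbbk G$-action on $x V_t$, and to recall that for a central idempotent $e$ and an algebra automorphism $\sigma$, $\sigma(e)$ is the idempotent cutting out the isotypic block $\sigma$-conjugate to the one cut out by $e$. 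Everything else is routine substitution.
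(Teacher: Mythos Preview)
Your proposal is correct, though organized somewhat loosely with several overlapping strands. The paper's proof is more uniform and purely computational: it writes out the explicit character formula $e_s=\frac{\chi_s(1)}{|G|}\sum_{h\in G}\chi_s(h)h^{-1}$ and pushes $z^l$ (respectively the $\Bbbk G$-action on $x^lV_i$) through this sum, using $z^l h^{-1}=\chi^{-l}(h)h^{-1}z^l$ (respectively \eqref{Eq}) and then recognizing $\chi_s\chi^{\mp l}=\chi_{\tau^{\pm l}(s)}$ to arrive directly at $e_{\tau^l(s)}z^l$ and $x^l(e_{\tau^{-l}(s)}\cdot v_i)$. No module-faithfulness argument or abstract projector reasoning is used.

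Your approach differs in two respects. For the action formulas you invoke the conceptual fact that $e_s$ is the projector onto the $V_s$-isotypic component together with the isomorphism $x^lV_t\cong V_{\tau^l(t)}$; this is cleaner than the paper's character-sum manipulation and immediately yields all three action statements at once. For the commutation you offer two options: verifying agreement on a complete family of indecomposables (valid, since the regular representation is faithful), or computing $\sigma(e_s)=e_{\tau(s)}$ directly --- the latter, once unwound, is exactly the paper's computation. Your deduction of the action formulas from the commutation via $x^lv_i=z^l\cdot v_i$ is a nice shortcut the paper does not take (it computes the action independently). The only place to tighten is the bookkeeping you flag yourself: rather than checking $\sigma(e_s)=e_{\tau(s)}$ ``on modules,'' it is simplest to note that $\sigma(h)=\chi(h)h$ turns the character sum for $e_s$ into that for $e_{\tau(s)}$ via $\chi_s\chi^{-1}=\chi_{\tau(s)}$, which is precisely the paper's one-line argument.
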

\begin{proof}
Note that $\chi^{-l}\chi_s=\chi_s\chi^{-l}=\chi_{\tau^l(s)}$ for $0\leqslant l\leqslant n-1$. We have that the degree of $\chi_s$ is equal to the degree of $\chi_{\tau^l(s)}$, namely, $\chi_s(1)=\chi_{\tau^l(s)}(1)$. Hence we obtain that
\begin{align*}
z^le_s&=\frac{\chi_s(1)}{|G|}\sum_{h\in G}\chi_s(h)z^lh^{-1}=\frac{\chi_s(1)}{|G|}\sum_{h\in G}\chi_s(h)\chi^{-l}(h)h^{-1}z^l \\
&=\frac{\chi_{\tau^l(s)}(1)}{|G|}\sum_{h\in G}\chi_{\tau^l(s)}(h)h^{-1}z^l=e_{\tau^l(s)}z^l.
\end{align*}
%Noticing that $z^le_s=\sigma^l(e_s)z^l$, we have $e_{\tau^l(s)}=\sigma^l(e_s)$.
When $H$ is of nilpotent type (or non-nilpotent type), for $i\in \Omega_0$ (or $i\in \Lambda_0$) and $v_i\in V_i$, it follows that
\begin{align*}
e_s\cdot(x^lv_i)&=\frac{\chi_s(1)}{|G|}\sum_{h\in G}\chi_s(h)h^{-1}\cdot (x^lv_i)=x^l(\frac{\chi_s(1)}{|G|}\sum_{h\in G}\chi_s(h)\chi^{-l}(h^{-1})h^{-1}\cdot v_i)\\
&=x^l(\frac{\chi_s(1)}{|G|}\sum_{h\in G}\chi_s(h)\chi^l(h)h^{-1}\cdot v_i)=x^l(\frac{\chi_{\tau^{-l}(s)}(1)}{|G|}\sum_{h\in G}\chi_{\tau^{-l}(s)}(h)h^{-1}\cdot v_i)\\
&=x^l(e_{\tau^{-l}(s)}\cdot v_i).
\end{align*}
Hence we have $e_{s}\cdot (x^lv_i)=\delta_{s,\tau^l(i)} x^lv_i$. Similarly, when $H$ is of non-nilpotent type, $e_{s}\cdot (x^lv_j)=\delta_{s, \tau^l(j)}x^lv_j$ for $j\in \Lambda_1$ and $0\leqslant l\leqslant n-1$.
\end{proof}

\begin{theorem}\label{4.2}
Let $H$ be of nilpotent type (or non-nilpotent type). Then for $1\leqslant k\leqslant n$ and $i\in \Omega_0$ (or $i\in \Lambda_0$), the annihilator ideal of $M(k,i)$ is
$$(z^{k-1}(1-e_i),\,z^{k-2}(1-e_i)(1-e_{\tau(i)}),\,\cdots,\,(1-e_i)(1-e_{\tau(i)})\cdots(1-e_{\tau^{k-1}(i)})).$$
\end{theorem}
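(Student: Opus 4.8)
Write $K$ for the ideal on the right-hand side and set $w_j:=z^{k-1-j}\prod_{l=0}^{j}(1-e_{\tau^l(i)})$ for $0\leqslant j\leqslant k-1$, so that $K=(w_0,\dots,w_{k-1})$ and $w_0=z^{k-1}(1-e_i)$. The plan is to establish $K\subseteq\operatorname{Ann}(M(k,i))$ and $\operatorname{Ann}(M(k,i))\subseteq K$ separately. For the first I would simply evaluate each $w_j$ on a basis vector $x^m v_i$ of $M(k,i)$, where $0\leqslant m\leqslant k-1$. By Lemma \ref{101} the central idempotent $\prod_{l=0}^{j}(1-e_{\tau^l(i)})$ sends $x^m v_i$ to $0$ whenever $\tau^m(i)$ coincides with some $\tau^l(i)$, $0\leqslant l\leqslant j$ — in particular whenever $m\leqslant j$ — and fixes $x^m v_i$ otherwise; in the latter case necessarily $m\geqslant j+1$, so the subsequent action of $z^{k-1-j}$ sends $x^m v_i$ to $x^{m+k-1-j}v_i=0$ because $m+k-1-j\geqslant k$. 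Hence $w_j\cdot x^m v_i=0$ in every case, so each $w_j$ lies in the two-sided ideal $\operatorname{Ann}(M(k,i))$, whence $K\subseteq\operatorname{Ann}(M(k,i))$.

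The reverse inclusion is the substantive part. Since $\operatorname{Ann}(M(k,i))$ is nonzero (it contains $w_0\neq0$), Proposition \ref{3.3} lets me write $\operatorname{Ann}(M(k,i))=(z^{m_1}d_1,\dots,z^{m_t}d_t)$ with each $d_j\in Z(\Bbbk G)$ idempotent and $n-1\geqslant m_1>\cdots>m_t\geqslant0$, and it then suffices to show $z^{m_j}d_j\in K$ for each $j$. Writing $d_j=\sum_s\alpha_s^{(j)}e_s$ with $\alpha_s^{(j)}\in\{0,1\}$, Lemma \ref{101} together with the formula \eqref{EQ} for the action of $z$ gives $(z^{m_j}d_j)\cdot x^l v_i=\alpha_{\tau^l(i)}^{(j)}\,x^{l+m_j}v_i$ when $l+m_j\leqslant k-1$ and $0$ otherwise. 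If $m_j\leqslant k-1$, then since $z^{m_j}d_j\in\operatorname{Ann}(M(k,i))$ we must have $\alpha_{\tau^l(i)}^{(j)}=0$ for all $0\leqslant l\leqslant k-1-m_j$, i.e. $d_j=d_j\prod_{l=0}^{k-1-m_j}(1-e_{\tau^l(i)})$; hence $z^{m_j}d_j=\bigl(z^{m_j}\prod_{l=0}^{k-1-m_j}(1-e_{\tau^l(i)})\bigr)d_j=w_{k-1-m_j}\,d_j\in K$. If instead $m_j\geqslant k$, then $z^{m_j}d_j\in(z^{m_j})\subseteq(z^k)$, and by Lemma \ref{4.1} (applied with $l=k-1$ and $s=i$) we have $z^k\in(z^{k-1}(1-e_i))=(w_0)\subseteq K$, so again $z^{m_j}d_j\in K$. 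This yields $\operatorname{Ann}(M(k,i))\subseteq K$, and combining the two inclusions gives the theorem.

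The step I expect to be the main obstacle is organizing the reverse inclusion. Two points need care. First, one must pass through the canonical generating set of Proposition \ref{3.3} so that the $\Bbbk G$-coefficients are central idempotents: for a general $h\in\Bbbk G$ the bare condition that $z^m h$ kill $M(k,i)$ does not pin down the block components of $h$, whereas for a central idempotent $d_j$ Lemma \ref{101} makes the action completely transparent. Second, one must separately absorb the powers $z^m$ with $m\geqslant k$, which annihilate $M(k,i)$ for the trivial dimension reason but are not themselves among the listed generators — this is exactly the role of Lemma \ref{4.1}. By contrast the forward inclusion is routine bookkeeping of which $e_s$ kill which $x^m v_i$. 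Finally, the whole argument goes through verbatim when $H$ is of non-nilpotent type, since for $i\in\Lambda_0$ the module $M(k,i)$ and all the relevant module actions are defined by the same formulas as in the nilpotent case.
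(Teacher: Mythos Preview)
Your proof is correct and follows essentially the same approach as the paper's: both directions use Lemma \ref{101} to compute the action of central idempotents on $x^m v_i$, and for the reverse inclusion both invoke Proposition \ref{3.3} to reduce to generators $z^{m_j}d_j$ with $d_j$ a central idempotent, then use Lemma \ref{4.1} to absorb the powers $z^{m_j}$ with $m_j\geqslant k$. The only cosmetic difference is that the paper first records $I\subseteq(z^k,w_0,\dots,w_{k-1})$ and afterwards eliminates $z^k$ via Lemma \ref{4.1}, whereas you dispatch the case $m_j\geqslant k$ inline.
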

\begin{proof}
Let $I=(z^{m_1}h_1+z^{m_2}h_2\cdots+z^{m_t}h_t)$ be the annihilator ideal of $M(k,i)$, where $n-1\geqslant m_1>\cdots>m_t\geqslant0$, $h_s\in Z(\Bbbk G)$, $h_s^2=h_s$ for $1\leqslant s\leqslant t$. Note that $M(k,i)=V_i\oplus xV_i\oplus\cdots\oplus x^{k-1}V_i$ for $i\in \Omega_0$ (or $i\in \Lambda_0$), where $V_i=\Bbbk G\cdot v_i$ for any non-zero $v_i\in V_i$. According to Lemma \ref{101}, we have that
\begin{gather*}
(1-e_i)\cdot v_i=0,\quad (1-e_{\tau(i)})\cdot (xv_i)=0,\quad \cdots,\quad (1-e_{\tau^{k-1}(i)})\cdot (x^{k-1}v_i)=0.
\end{gather*}
%It is obvious that $z^k\cdot (x^lv_i)=0$.
When $0\leqslant c\leqslant k-1$ and $0\leqslant l\leqslant k-1-c$, it is clear that
$$(1-e_i)\cdots(1-e_{\tau^{k-1-c}(i)})\cdot(x^lv_i)=0.$$
Hence $z^{c}(1-e_i)\cdots(1-e_{\tau^{k-1-c}(i)})\cdot(x^lv_i)=0$. When $1\leqslant c\leqslant k-1$ and $k-c\leqslant l\leqslant k-1$, noticing that $(1-e_i)\cdots(1-e_{\tau^{k-1-c}(i)})$ acts on $x^lv_i$ as multiplied by a scalar and $z^{c}\cdot (x^lv_i)=0$, we have $z^{c}(1-e_i)\cdots(1-e_{\tau^{k-1-c}(i)})\cdot(x^lv_i)=0$. Therefore, for any $0\leqslant l\leqslant k-1$, it follows
$$z^{c}(1-e_i)\cdots(1-e_{\tau^{k-1-c}(i)})\cdot(x^lv_i)=0.$$
Hence $(z^{k-1}(1-e_i),\,z^{k-2}(1-e_i)(1-e_{\tau(i)}),\,\cdots,\,(1-e_i)\cdots(1-e_{\tau^{k-1}(i)}))\subseteq I.$ Since $I$ is the annihilator ideal of $M(k,i)$, it follows that
\begin{gather*}
z^{m_s}h_s\cdot v_i=0, \quad z^{m_s}h_s\cdot (xv_i)=0,\quad \cdots, \quad z^{m_s}h_s\cdot (x^{k-1}v_i)=0
\end{gather*}
for $1\leqslant s\leqslant t$. When $k\leqslant m_s\leqslant n-1$, it is clear $z^{m_s}h_s\cdot(x^lv_i)=0$ for any $h_s\in Z(\Bbbk G)$ and $0\leqslant l\leqslant k-1$. When $0\leqslant m_s\leqslant k-1$, noticing that $h_s$ acts on $x^lv_i$ as multiplied by a scalar and $z^{m_s}\cdot(x^lv_i)\neq0$ for $0\leqslant l\leqslant k-1-m_s$, we conclude $h_s\cdot (x^lv_i)=0$ for $0\leqslant l\leqslant k-1-m_s$. Let $h_s=\sum_{0\leqslant s\leqslant p-1}a_se_s$, $a_s\in \{0,1\}$. Since $h_s\cdot (x^lv_i)=a_{\tau^l(i)}=0$, we have 
$$z^{m_s}h_s\in\bigg(z^{m_s}\sum_{s\notin \{i,\tau(i),\cdots,\tau^{k-1-m_s}(i)\}}e_s\bigg)=(z^{m_s}(1-e_i)(1-e_{\tau(i)})\cdots(1-e_{\tau^{k-1-m_s}(i)})).$$
Hence
$$I\subseteq (z^k,\, z^{k-1}(1-e_i),\,\cdots,\, (1-e_i)(1-e_{\tau(i)})\,\cdots\,(1-e_{\tau^{k-1}(i)})).$$
By Lemma \ref{4.1}, it follows
$$I\subseteq (z^{k-1}(1-e_i), \,z^{k-2}(1-e_i)(1-e_{\tau(i)}),\,\cdots,\,(1-e_i)(1-e_{\tau(i)})\cdots(1-e_{\tau^{k-1}(i)})).$$
We complete the proof.
\end{proof}
In the sequel we discuss the annihilator ideal of simple $H$-module $P_j$, where $H$ is of non-nilpotent type and $j\in \Lambda_1$. Let $\mathbf{Orb}(j)=\{j,\tau(j),\cdots,\tau^{n-1}(j)\}$ be the orbit generated by $j$. We have the following theorem.
\begin{theorem}\label{4.3}
Let $H$ be of non-nilpotent type. Then for $j\in \Lambda_1$, the annihilator ideal of $P_j$ is
\begin{equation*}
\bigg(\sum_{i\in \Lambda_0}e_i+\sum_{j^\prime\in \Lambda_1-\mathbf{Orb}(j)}e_{j^\prime}\bigg).
\end{equation*}
\end{theorem}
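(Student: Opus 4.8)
The plan is to prove the two inclusions between the annihilator ideal $I$ of $P_j$ and the ideal $(f)$ generated by $f:=\sum_{i\in\Lambda_0}e_i+\sum_{j'\in\Lambda_1-\mathbf{Orb}(j)}e_{j'}$. I would first rewrite the generator: since $\sum_{s=0}^{p-1}e_s=1$, since $\tau$ preserves $\Lambda_1$, and since the elements $\tau^l(j)$ for $0\leqslant l\leqslant n-1$ are pairwise distinct (so that $\mathbf{Orb}(j)=[j]$ has exactly $n$ elements, all lying in $\Lambda_1$) by Lemma \ref{2.0}(4), one gets $1-f=\sum_{l=0}^{n-1}e_{\tau^l(j)}$; equivalently, $f$ is the sum of exactly those $e_s$ with $s\notin\mathbf{Orb}(j)$. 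The inclusion $(f)\subseteq I$ is then immediate from Lemma \ref{101}: for $s\notin\mathbf{Orb}(j)$ the central idempotent $e_s$ acts as zero on every summand $x^lV_j$ of $P_j=\bigoplus_{l=0}^{n-1}x^lV_j$, so $f\cdot P_j=0$, whence $f\in I$ and $(f)\subseteq I$.

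For the reverse inclusion I would take an arbitrary $u\in I$ and write $u=\sum_{m=0}^{n-1}z^mc_m$ with $c_m\in\Bbbk G$, using the basis $\{z^mh\mid 0\leqslant m\leqslant n-1,\ h\in G\}$ of $H$. The crucial point, and the only place where the hypothesis $j\in\Lambda_1$ (that is, $\lambda_j\neq1$) is genuinely used, is that each $z^m$ restricts to a vector-space isomorphism from the summand $x^lV_j$ onto the summand $x^{(l+m)\bmod n}V_j$: for $l+m\leqslant n-1$ it sends $x^lw\mapsto x^{l+m}w$, and for $l+m\geqslant n$ it sends $x^lw\mapsto(\lambda_j-1)x^{l+m-n}w$, which is still an isomorphism precisely because $\lambda_j-1\neq0$. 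Since each $c_m$ preserves the $\Bbbk G$-submodule $x^lV_j$, the operators $z^0c_0,\dots,z^{n-1}c_{n-1}$ carry $x^lV_j$ into the $n$ pairwise distinct summands $x^0V_j,\dots,x^{n-1}V_j$; hence $u\cdot(x^lV_j)=0$ forces $z^mc_m\cdot(x^lV_j)=0$ for every $m$, and by the isomorphism above this says that $c_m$ acts as zero on $x^lV_j\cong V_{\tau^l(j)}$. As $\Bbbk G$ acts faithfully on each simple component, this means $e_{\tau^l(j)}c_m=0$. Letting $l$ run over $0,\dots,n-1$ gives $e_sc_m=0$ for all $s\in\mathbf{Orb}(j)$, so $c_m=fc_m$, and therefore $u=\sum_{m=0}^{n-1}z^m(fc_m)\in(f)$. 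Thus $I\subseteq(f)$, and together with the first inclusion this yields the claimed equality.

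I expect the main obstacle to be purely organizational: making precise that $z^m$ really restricts to an isomorphism $x^lV_j\to x^{(l+m)\bmod n}V_j$, using it to decouple the single equation $u\cdot(x^lV_j)=0$ into the separate conditions $z^mc_m\cdot(x^lV_j)=0$, and then reading off $e_{\tau^l(j)}c_m=0$ from the faithfulness of the simple component $\Bbbk Ge_{\tau^l(j)}$. This is exactly the feature that distinguishes $P_j$ from the modules $M(k,i)$ of Theorem \ref{4.2}: there the action of $z$ on the top layer is zero rather than multiplication by the nonzero scalar $\lambda_j-1$, which is why the annihilator of $M(k,i)$ needs the whole staircase of $z$-power truncations, whereas that of $P_j$ collapses to a single central idempotent of $\Bbbk G$. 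Alternatively, one could first invoke Proposition \ref{3.3} to write $I=(z^{m_1}d_1,\dots,z^{m_t}d_t)$ with each $d_s\in Z(\Bbbk G)$ idempotent, and then run the same computation on each generator: $z^{m_s}d_s$ annihilates $P_j$ exactly when the $e_{\tau^l(j)}$-coefficient of $d_s$ vanishes for all $l$, i.e.\ when $d_sf=d_s$, giving $z^{m_s}d_s\in(f)$ directly.
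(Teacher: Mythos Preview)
Your proof is correct. The paper takes a slightly different route for the inclusion $I\subseteq(f)$: it first invokes Proposition~\ref{3.3} to write $I=(z^{m_1}h_1,\ldots,z^{m_t}h_t)$ with each $h_s\in Z(\Bbbk G)$ a central idempotent, and then argues on each generator separately. Because $h_s$ is central in $\Bbbk G$, it acts as a scalar on the simple $\Bbbk G$-module $x^lV_j$; combined with $z^{m_s}\cdot(x^lv_j)\neq0$ (the point where $\lambda_j\neq1$ enters) this forces that scalar to be zero, and reading off the $e$-coefficients of $h_s$ via Lemma~\ref{101} shows $h_s$ is supported on the complement of $\mathbf{Orb}(j)$, whence $z^{m_s}h_s\in(f)$. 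Your main argument bypasses Proposition~\ref{3.3} entirely: you take an arbitrary $u=\sum_m z^mc_m$ with $c_m\in\Bbbk G$ not assumed central, use the direct-sum decomposition of $P_j$ together with the fact that $z^m$ carries $x^lV_j$ isomorphically onto $x^{(l+m)\bmod n}V_j$ to decouple the terms, and then appeal to the faithfulness of the simple block $\Bbbk Ge_{\tau^l(j)}$ on $V_{\tau^l(j)}$ to deduce $e_{\tau^l(j)}c_m=0$. This is more self-contained and handles the whole ideal at once; the paper's version is shorter once Proposition~\ref{3.3} is available, since centrality of $h_s$ replaces your decoupling-plus-faithfulness step with a one-line scalar argument. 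The alternative you sketch at the end is exactly the paper's argument.
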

\begin{proof}
Let $I=(z^{m_1}h_1+z^{m_2}h_2+\cdots+z^{m_t}h_t)$ be the annihilator ideal of $P_j$, where $n-1\geqslant m_1>m_2>\cdots>m_t\geqslant0$, $h_s\in Z(\Bbbk G)$, $h_s^2=h_s$ for $1\leqslant s\leqslant t$.
Note that $\tau$ preserve $\Lambda_0$ and $\Lambda_1$ respectively and $P_j=V_j\oplus xV_j\oplus\cdots\oplus x^{n-1}V_j$, where $V_j=\Bbbk G\cdot v_j$ for any non-zero $v_j\in V_j$. For any $i\in \Lambda_0$, $j^\prime\in \Lambda_1-\mathbf{Orb}(j)$ and $0\leqslant l\leqslant n-1$, it follows that $\tau^l(i)\in \Lambda_0$ and $\tau^l(j^\prime)\in \Lambda_1-\mathbf{Orb}(j)$. Therefore, $e_{\tau^l(i)}\cdot v_j=0$ and $e_{\tau^l(j^\prime)}\cdot v_j=0$. For $0\leqslant l\leqslant n-1$, we have that
\begin{align*}
&e_i\cdot (x^lv_j)=x^l(e_{\tau^{-l}(i)}\cdot v_j)=x^l(e_{\tau^{n-l}(i)}\cdot v_j)=0,\\
&e_{j^\prime}\cdot (x^lv_j)=x^l(e_{\tau^{-l}(j^\prime)}\cdot v_j)=x^l(e_{\tau^{n-l}(j^\prime)}\cdot v_j)=0.
\end{align*}
Since $P_j=V_j\oplus xV_j\oplus\cdots\oplus x^{n-1}V_j$, it is clear that $e_i\cdot P_j=0$ and $e_{j^\prime}\cdot P_j=0$. Hence $(\sum_{i\in \Lambda_0}e_i+\sum_{j^\prime\in \Lambda_1-\mathbf{Orb}(j)}e_{j^\prime})\cdot P_j=0$. Therefore it follows that
$$\bigg(\sum_{i\in \Lambda_0}e_i+\sum_{j^\prime\in \Lambda_1-\mathbf{Orb}(j)}e_{j^\prime}\bigg)\subseteq I.$$
Since $I$ is the annihilator ideal of $P_j$, it follows $z^{m_s}h_s\cdot (x^lv_j)=0$ for $1\leqslant s\leqslant t$ and $0\leqslant l\leqslant n-1$. Note that $h_s$ acts on $x^lv_j$ as multiplied by a scalar and $z^{m_s}\cdot(x^lv_j)\neq0$. Hence $h_s\cdot (x^lv_j)=0$ for $1\leqslant s\leqslant t$ and $0\leqslant l\leqslant n-1$. Noticing that $h_s\in Z(\Bbbk G)$ and $h_s^2=h_s$, we may suppose that
$$h_s=\sum_{i\in\Lambda_0}k_{is}e_i+\sum_{j^\prime\in \Lambda_1}k_{j^\prime s}e_{j^\prime},$$
where $k_{is}, k_{j^{\prime}s}\in \{0,1\}$. It is clear that $h_s\cdot (x^lv_j)=k_{\tau^l(j)s}=0$ for $0\leqslant l\leqslant n-1$. Hence
$$h_s=\sum_{i\in\Lambda_0}k_{is}e_i+\sum_{j^\prime\in \Lambda_1-\mathbf{Orb}(j)}k_{j^\prime s}e_{j^\prime}.$$
Thus, we obtain
$$z^{m_s}h_s=z^{m_s}\bigg(\sum_{i\in\Lambda_0}k_{is}e_i+\sum_{j^\prime\in \Lambda_1-\mathbf{Orb}(j)}k_{j^\prime s}e_{j^\prime}\bigg)\in \bigg(\sum_{i\in \Lambda_0}e_i+\sum_{j^\prime\in \Lambda_1-\mathbf{Orb}(j)}e_{j^\prime}\bigg).$$
It follows $I \subseteq (\sum_{i\in \Lambda_0}e_i+\sum_{j^\prime\in \Lambda_1-\mathbf{Orb}(j)}e_{j^\prime})$. We complete the proof.
\end{proof}

When $H$ is of nilpotent type (or non-nilpotent type), by Theorem \ref{4.2} the annihilator ideal of simple $H$-module $M(1,i)$ is $(1-e_i)$ for $i\in \Omega_0$ (or $i\in \Lambda_0$).
%When $H$ is of non-nilpotent type,  by Theorems \ref{4.2} and \ref{4.3} the annihilator ideal of simple $H$-module $M(1,i)$ is $(1-e_i)$, $i\in \Lambda_0$, and the annihilator ideal of simple $H$-module $P_j$ is $(\sum_{i\in \Lambda_0}e_i+\sum_{j^\prime\in \Lambda_1-\mathbf{Orb}(j)}e_{j^\prime}),$ $j\in \Lambda_1$.
Hence by Theorems \ref{4.2} and \ref{4.3}, we deduce the following corollary.
\begin{corollary}\label{4.4}
When $H$ is of nilpotent type, the complete set of maximal ideals of $H$ is
$$\left\{(1-e_i)\mid i\in \Omega_0\right\}.$$
When $H$ is of non-nilpotent type, the complete set of maximal ideals of $H$ is
$$\bigg\{(1-e_i),\, \bigg(\sum_{i\in \Lambda_0}e_i+\sum_{j^\prime\in \Lambda_1-\mathbf{Orb}(j)}e_{j^\prime}\bigg) \,\,\bigg|\,\, i\in \Lambda_0,\, j\in \Lambda_1\bigg\}.$$
\end{corollary}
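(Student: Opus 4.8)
The plan is to reduce the statement to the classification of simple $H$-modules from Section $2$ together with Theorems \ref{4.2} and \ref{4.3}, via the standard fact that in a finite-dimensional algebra the maximal two-sided ideals are exactly the annihilators of the simple modules. First I would record this correspondence for $H$. If $S$ is a simple $H$-module, then $H/\mathrm{Ann}_H(S)$ acts faithfully on the finite-dimensional simple module $S$, so by the Jacobson density theorem it is isomorphic to a matrix algebra $M_m(D)$ over a division algebra $D$; this is a simple artinian ring, hence $\mathrm{Ann}_H(S)$ is a maximal two-sided ideal of $H$. Conversely, if $M$ is a maximal two-sided ideal of $H$, then $H/M$ is a finite-dimensional simple algebra and therefore has, up to isomorphism, a unique simple module $S$; viewing $S$ as an $H$-module one gets $M\subseteq\mathrm{Ann}_H(S)\subsetneq H$, and maximality of $M$ forces $M=\mathrm{Ann}_H(S)$. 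The assignment $S\mapsto\mathrm{Ann}_H(S)$ is moreover injective on isomorphism classes, because $S$ is recovered as the unique simple module of $H/\mathrm{Ann}_H(S)$.

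Then I would substitute the known data. When $H$ is of nilpotent type, Proposition \ref{2.2} gives $\{M(1,i)\mid i\in\Omega_0\}$ as a complete irredundant list of simple $H$-modules; taking $k=1$ in Theorem \ref{4.2} collapses the ideal listed there to its single generator $z^{0}(1-e_i)=1-e_i$, so $\mathrm{Ann}_H(M(1,i))=(1-e_i)$ for every $i\in\Omega_0$. Hence the maximal ideals of $H$ are precisely $\{(1-e_i)\mid i\in\Omega_0\}$, and these are pairwise distinct by the injectivity above (equivalently, by Corollary \ref{3.8} with $m=0$, since $1-e_i=\sum_{s\neq i}e_s$). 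When $H$ is of non-nilpotent type, the simple $H$-modules are $\{M(1,i),P_{[j]}\mid i\in\Lambda_0,\ j\in\Lambda_1\}$ by the last proposition of Section $2$; Theorem \ref{4.2} with $k=1$ again yields $\mathrm{Ann}_H(M(1,i))=(1-e_i)$, while Theorem \ref{4.3} yields $\mathrm{Ann}_H(P_j)=\big(\sum_{i\in\Lambda_0}e_i+\sum_{j^\prime\in\Lambda_1-\mathbf{Orb}(j)}e_{j^\prime}\big)$, an ideal that depends only on $\mathbf{Orb}(j)$, hence only on $[j]$, which is consistent with $P_j\cong P_{j^\prime}$ if and only if $[j]=[j^\prime]$. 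Collecting these annihilators over all isomorphism classes of simple modules produces exactly the set displayed in the corollary.

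I do not anticipate a genuine obstacle: once the input from Section $2$ and Theorems \ref{4.2}--\ref{4.3} is available, everything is a direct substitution. The one point requiring a little care is that the correspondence in the first paragraph is between \emph{two-sided} maximal ideals and simple modules, which is precisely where the finite-dimensionality of $H$ enters (it guarantees that $H/M$ is artinian, hence semisimple with a unique simple module). If one wishes to avoid the density theorem, one can instead use that $\mathrm{rad}(H)$ is contained in every maximal two-sided ideal and pass to the semisimple quotient $H/\mathrm{rad}(H)$, where the claim becomes the elementary fact that the maximal two-sided ideals of a finite product of matrix rings over division algebras are the kernels of the coordinate projections.
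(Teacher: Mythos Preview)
Your proposal is correct and follows the same approach as the paper, which simply deduces the corollary from Theorems \ref{4.2} and \ref{4.3} together with the observation that $\mathrm{Ann}_H(M(1,i))=(1-e_i)$. You have merely made explicit the standard correspondence between maximal two-sided ideals and annihilators of simple modules that the paper leaves implicit.
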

When $H$ is of nilpotent type, we also characterize all completely prime ideals of $H$.
\begin{theorem}
Let $H$ be of nilpotent type. Then the complete set of completely prime ideals of $H$ is $\{(1-e_i)\mid i\in \Omega_0\}.$
\end{theorem}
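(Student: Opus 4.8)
The plan is to identify the completely prime ideals of $H$ with the list $\{(1-e_i)\mid i\in\Omega_0\}$ by proving the two inclusions.

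For the forward inclusion, let $P\subseteq H$ be completely prime. First I would observe that $P$ contains $z$: since $\mathcal D$ is of nilpotent type we have $z^{n}=\alpha(g^{n}-1)=0\in P$, and writing $z^{m}=z\cdot z^{m-1}$ and applying complete primeness repeatedly (a downward induction on the exponent) forces $z\in P$. Hence $P\supseteq(z)=J$, so $\bar P:=P/J$ is a completely prime---in particular prime---ideal of $H/J\cong\Bbbk G$; as $\Bbbk G$ is Artinian, $\bar P$ is maximal. Using the block decomposition $\Bbbk G=\bigoplus_{i\in\Omega_0}\Bbbk G e_i$, the maximal two-sided ideals of $\Bbbk G$ are exactly $\langle 1-e_i\rangle=\bigoplus_{j\neq i}\Bbbk G e_j$ for $i\in\Omega_0$; pulling back along $H\twoheadrightarrow\Bbbk G$, and using $z\in(1-e_i)$ (Lemma~\ref{4.1} with $l=0$, which makes $(1-e_i)$ the full preimage of $\langle 1-e_i\rangle$), we get $P=(1-e_i)$ for some $i$. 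One can instead shortcut this via Corollary~\ref{4.4}: a completely prime ideal is prime, and a prime ideal of the Artinian ring $H$ is maximal, hence equals some $(1-e_i)$.

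For the reverse inclusion, I would show each $(1-e_i)$ is completely prime by computing the quotient. Since $z\in(1-e_i)$, the surjection $H\to H/(1-e_i)$ factors through $\Bbbk G=H/(z)$, where $1-e_i$ is a central idempotent; hence $H/(1-e_i)\cong\Bbbk G/\langle 1-e_i\rangle\cong\Bbbk G e_i$. As $\Bbbk G e_i$ has no zero divisors, $(1-e_i)$ is completely prime, and together with the forward inclusion this proves the theorem.

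Most of this is formal: the nilpotency of $z$, Lemma~\ref{4.1}, Corollary~\ref{4.4}, and elementary manipulations with the central idempotents $e_i$. The one step carrying the real content is the final assertion of the reverse inclusion---that the semisimple block $\Bbbk G e_i$ occurring as the quotient is genuinely a domain, so that the maximal ideal $(1-e_i)$ is not merely prime but completely prime; I expect identifying $H/(1-e_i)$ with $\Bbbk G e_i$ and settling this point to be where the care is needed.
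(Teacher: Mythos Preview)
Your forward inclusion is correct and arguably cleaner than the paper's. The paper works with the generator form $I=(z^{m_1}h_1+\cdots+z^{m_t}h_t)$ from Proposition~\ref{3.3}: it forces $m_t=0$ by observing that if $m_t>0$ then $e_0,e_1\notin I$ while $e_0e_1=0\in I$, then factors $h_t=\prod_{i\in\Omega_0\setminus B}(1-e_i)$ and extracts some $1-e_i\in I$ by complete primeness, finishing by maximality of $(1-e_i)$. Your route through $z^n=0\Rightarrow z\in P$, the quotient $H/J\cong\Bbbk G$, and the classification of maximal ideals there bypasses the generator machinery entirely; the shortcut via Corollary~\ref{4.4} (completely prime $\Rightarrow$ prime $\Rightarrow$ maximal in the Artinian ring $H$) is shorter still.

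There is, however, a genuine gap in your reverse inclusion---and it is one the paper's own proof simply does not address. Your identification $H/(1-e_i)\cong\Bbbk G e_i$ is correct, but $\Bbbk G e_i\cong M_{d_i}(\Bbbk)$ with $d_i=\dim_\Bbbk V_i$, and a full matrix algebra is a domain only when $d_i=1$. For nonabelian $G$ there exist $i\in\Omega_0$ with $d_i>1$, and for those indices the ideal $(1-e_i)$ is maximal (hence prime) but \emph{not} completely prime. So the step you rightly flagged as carrying the real content actually fails without an additional hypothesis such as $G$ abelian; the theorem as stated inherits this issue, and neither your argument nor the paper's supplies what is missing.
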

\begin{proof}
Let $I=(z^{m_1}h_1+z^{m_2}h_2+\cdots+z^{m_t}h_t)$ be any completely prime ideal, where $n-1\geqslant m_1>m_2>\cdots>m_t\geqslant0$, $h_s\in Z(\Bbbk G)$, $h_s^2=h_s$ for $1\leqslant s\leqslant t$. If $m_t>0$, then each element of $I$ is of the form $zv$, $v\in H$. It is easy to see that $e_0, e_1\notin I$ and $e_0e_1=0\in I$. It is in contradiction with the assumption that $I$ is completely prime. Therefore, $m_t=0$ and $I=(z^{m_1}h_1+z^{m_2}h_2+\cdots+h_t)$. We may assume that $h_t=\sum_{i\in B}e_i,$ where $B\subseteq \Omega_0=\{0,1,\cdots,p-1\}$. Hence
$$h_t=1-\sum_{i\in \Omega_0-B}e_i=\prod_{i\in \Omega_0-B}(1-e_i).$$
If $B=\Omega_0$, then $h_t=1$. In this case, we have $I=(1)$, which is in contradiction with $I$ being completely prime. Therefore, $B\subsetneq \Omega_0$. Note that $h_t=\prod_{i\in \Omega_0-B}(1-e_i)\in I$ and $I$ is completely prime. This would force $1-e_i\in I$ for some $i\in \Omega_0-B$. Since $(1-e_i)$ is maximal, it follows that $I=(1-e_i)$. Thus, we complete the proof.
\end{proof}

\section{When G is Klein $4$-group $K_4$}
\indent

In this section, we consider the case when $G=\langle b,c\mid b^2=c^2=1, bc=cb\rangle$ is Klein $4$-group $K_4$. It is clear that $\Bbbk K_4$ has four non-isomorphic $1$-dimensional simple modules $V_0=\Bbbk v_0$, $V_1=\Bbbk v_1$, $V_2=\Bbbk v_2$ and  $V_3=\Bbbk v_3$, where
\begin{align*}
\qquad&b\cdot v_0=v_0, \quad &c\cdot v_0=v_0; \quad\qquad &b\cdot v_1=v_1, \quad &c\cdot v_1=-v_1;\\
\qquad&b\cdot v_2=-v_2, \quad &c\cdot v_2=v_2; \quad\qquad &b\cdot v_3=-v_3, \quad &c\cdot v_3=-v_3.
\end{align*}
Let $\chi=\chi_3$ be the character of $V_3$ (i.e., $\chi(b)=\chi(c)=-1$). Then the order of $\chi$ is $2$. Consider the group datum $\mathcal{D}=(K_4,\chi,b,0)$ and let $H=H_{\mathcal{D}}$. Then $H$ is generated as an algebra by $b,c$ and $z$ subject to the following relations:
\begin{gather*}
zb=-bz, \qquad zc=-cz, \qquad z^2=0.
\end{gather*}
Let $M(k,i)=V_i\oplus\cdots\oplus x^{k-1}V_i$ for $0\leqslant i\leqslant 3$ and $1\leqslant k\leqslant2$. By Proposition \ref{2.2}, the complete set of non-isomorphic indecomposable $H$-modules is $\{M(k,i)\mid 0\leqslant i\leqslant 3, 1\leqslant k\leqslant2\}$. Let
\begin{gather*}
e_0=\frac{1}{4}(1+b)(1+c),\qquad e_1=\frac{1}{4}(1+b)(1-c),\\
e_2=\frac{1}{4}(1-b)(1+c), \qquad e_3=\frac{1}{4}(1-b)(1-c).
\end{gather*}
It is clear that $\{e_i\mid 0\leqslant i\leqslant 3\}$ is a complete set of primitive orthogonal idempotents of $\Bbbk K_4$ and $e_s\cdot v_i=\delta_{s,i}v_i$. By Lemma \ref{101} and direct calculations, we have that
\begin{gather*}
e_0z=ze_{\tau(0)}=ze_3,\quad e_1z=ze_{\tau(1)}=ze_2,\quad e_2z=ze_{\tau(2)}=ze_1,\quad e_3z=ze_{\tau(3)}=ze_0.
\end{gather*}
Thus, it follows that $\tau(0)=3, \tau(1)=2, \tau(2)=1$ and $\tau(3)=0$.

\begin{lemma}\label{5.1} We have
\begin{gather*}
z\in (e_0+e_1)=(1+b), \quad z\in (e_0+e_2)=(1+c),\\
z\in (e_1+e_3)=(1-c), \quad z\in (e_2+e_3)=(1-b).
\end{gather*}
\end{lemma}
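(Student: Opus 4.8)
The plan is to treat the statement in two halves: first the four ring-theoretic equalities $(e_0+e_1)=(1+b)$, etc., and then the four membership assertions $z\in(1+b)$, etc.

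For the equalities I would argue by a one-line computation in $\Bbbk K_4$. Since $b$ and $c$ commute, $e_0+e_1=\tfrac14(1+b)(1+c)+\tfrac14(1+b)(1-c)=\tfrac14(1+b)\bigl[(1+c)+(1-c)\bigr]=\tfrac12(1+b)$, and because a nonzero scalar is a unit of $H$, the two-sided ideals $(e_0+e_1)$ and $(1+b)$ coincide. The remaining three equalities are the same computation: $e_0+e_2=\tfrac12(1+c)$, $e_1+e_3=\tfrac12(1-c)$, and $e_2+e_3=\tfrac12(1-b)$, so $(e_0+e_2)=(1+c)$, $(e_1+e_3)=(1-c)$, $(e_2+e_3)=(1-b)$.

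For the memberships the key observation is that the defining relation $zb=-bz$ forces the ``$bz$'' terms to cancel upon symmetrization:
\[
z(1+b)+(1+b)z=(z+zb)+(z+bz)=(z-bz)+(z+bz)=2z,
\]
and both $z(1+b)$ and $(1+b)z$ lie in the two-sided ideal $(1+b)$; since $\operatorname{char}\Bbbk=0$ the scalar $2$ is a unit, so $z\in(1+b)$. Replacing $1+b$ by $1-b$ gives $z(1-b)+(1-b)z=(z-zb)+(z-bz)=(z+bz)+(z-bz)=2z$, hence $z\in(1-b)$; and running the identical argument with the relation $zc=-cz$ in place of $zb=-bz$ yields $z\in(1+c)$ and $z\in(1-c)$.

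There is essentially no obstacle: the whole argument is a short direct calculation using only the relations $zb=-bz$, $zc=-cz$ and the invertibility of $2$. The one point worth flagging is that the anticommutation is used in an essential way — it is precisely the sign in $\chi(b)=\chi(c)=-1$ that produces the cancellation $-bz+bz=0$; were $\chi$ trivial on $b$ or $c$, the corresponding element $1\pm b$ would generate a proper ideal not containing $z$. Thus the hypothesis $\chi=\chi_3$ is what makes the lemma work, and this lemma is the computational input needed to list the ideals of $H$ in the $K_4$ case.
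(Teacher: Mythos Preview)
Your proof is correct and follows essentially the same approach as the paper: both establish $(e_0+e_1)=(1+b)$ via $e_0+e_1=\tfrac12(1+b)$ and then use the anticommutation $zb=-bz$ in a one-line computation. The only cosmetic difference is that you form the \emph{sum} $z(1+b)+(1+b)z=2z$ directly, whereas the paper forms the \emph{difference} $z(1+b)-(1+b)z=2zb$ and then invokes invertibility of $b$; your variant is marginally slicker since it avoids that last step.
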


\begin{proof}
Note that $e_0+e_1=\frac{1}{2}(1+b)$.  It follows $(e_0+e_1)=(1+b)$. Since
$$z(1+b)-(1+b)z=2zb\in (1+b)$$
and $b$ is invertible, we obtain $z\in (1+b)$. The proof of the other three formulas is similar.
\end{proof}
We calculate the annihilator ideals of indecomposable $H$-modules in the following corollary.
\begin{corollary}\label{5.2}
The annihilator ideal of $M(2,i)$ is $(ze_{\tau(i)}+e_k+e_{\tau(k)})$ for $0\leqslant i\leqslant 3$ and $k\notin\{i,\tau(i)\}$.
\end{corollary}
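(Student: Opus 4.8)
The plan is to specialize Theorem \ref{4.2} to the case at hand ($G=K_4$, $n=2$) and then rewrite the resulting two-generator presentation of the annihilator ideal as the single generator $ze_{\tau(i)}+e_k+e_{\tau(k)}$. First I would record the combinatorics of $\tau$ on $\Omega_0=\{0,1,2,3\}$: the computation just before the corollary gives $\tau(0)=3$, $\tau(3)=0$, $\tau(1)=2$, $\tau(2)=1$, so $\tau$ has order $2$ and no fixed points. Hence, for $k\notin\{i,\tau(i)\}$ the four indices $i,\tau(i),k,\tau(k)$ are pairwise distinct and $\{i,\tau(i),k,\tau(k)\}=\Omega_0$. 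Since $\{e_s\mid 0\leqslant s\leqslant 3\}$ is a complete set of orthogonal idempotents with $\sum_s e_s=1$, this yields $1-e_i=e_{\tau(i)}+e_k+e_{\tau(k)}$ and $(1-e_i)(1-e_{\tau(i)})=e_k+e_{\tau(k)}$. Applying Theorem \ref{4.2} with $k=n=2$, the annihilator ideal of $M(2,i)$ is therefore
$$I:=(z(1-e_i),\,(1-e_i)(1-e_{\tau(i)}))=(z(e_{\tau(i)}+e_k+e_{\tau(k)}),\,e_k+e_{\tau(k)}).$$

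Write $a=ze_{\tau(i)}+e_k+e_{\tau(k)}$. For the inclusion $(a)\subseteq I$ I would simply check that $a$ annihilates $M(2,i)=\Bbbk v_i\oplus\Bbbk(xv_i)$: using $z\cdot v_i=xv_i$, $z\cdot(xv_i)=0$, $e_s\cdot v_i=\delta_{s,i}v_i$ and (Lemma \ref{101}) $e_s\cdot(xv_i)=\delta_{s,\tau(i)}xv_i$, each term of $a$ kills both basis vectors, because $k,\tau(k)\notin\{i,\tau(i)\}$ while $ze_{\tau(i)}\cdot v_i=z\cdot 0=0$ and $ze_{\tau(i)}\cdot(xv_i)=z\cdot(xv_i)=0$.

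For the reverse inclusion $I\subseteq(a)$ I would show that both generators displayed above lie in $(a)$. The key computation is $(e_k+e_{\tau(k)})a=e_k+e_{\tau(k)}$: expanding and using $e_kz=ze_{\tau(k)}$, $e_{\tau(k)}z=ze_{\tau^2(k)}=ze_k$ (Lemma \ref{101}) together with $e_se_{\tau(i)}=0$ for $s\in\{k,\tau(k)\}$, the term $(e_k+e_{\tau(k)})ze_{\tau(i)}$ vanishes while $(e_k+e_{\tau(k)})^2=e_k+e_{\tau(k)}$ survives. Thus $e_k+e_{\tau(k)}\in(a)$, whence $ze_{\tau(i)}=a-(e_k+e_{\tau(k)})\in(a)$ and $z(e_k+e_{\tau(k)})\in(a)$, and summing these gives $z(e_{\tau(i)}+e_k+e_{\tau(k)})\in(a)$. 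Hence $I\subseteq(a)$, and combining the two inclusions proves the corollary.

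The only mildly delicate point — and the reason the argument is routed through Lemma \ref{101} rather than through Lemma \ref{5.1} — is that one cannot just drop the summand $z(e_k+e_{\tau(k)})$ from the generator $z(1-e_i)$: the pair $\{e_k,e_{\tau(k)}\}$ is exactly one orbit of $\tau$, and such pairs (unlike those appearing in Lemma \ref{5.1}) need not satisfy $z\in(e_k+e_{\tau(k)})$. So the correct route is to first exhibit $e_k+e_{\tau(k)}$ inside $(a)$ via the commutation relation $e_sz=ze_{\tau(s)}$, and only then recover the two original generators.
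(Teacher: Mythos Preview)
Your proof is correct and follows the same route as the paper: specialize Theorem~\ref{4.2} to $n=2$, rewrite $1-e_i=e_{\tau(i)}+e_k+e_{\tau(k)}$ and $(1-e_i)(1-e_{\tau(i)})=e_k+e_{\tau(k)}$, and then collapse the two-generator ideal to the single generator $ze_{\tau(i)}+e_k+e_{\tau(k)}$; the paper simply asserts this last equality as ``clear,'' whereas you carry out the verification via $(e_k+e_{\tau(k)})a=e_k+e_{\tau(k)}$. Two minor remarks: your inclusion $(a)\subseteq I$ could be obtained more directly by observing $z(e_k+e_{\tau(k)})\in(e_k+e_{\tau(k)})\subseteq I$, hence $ze_{\tau(i)}=z(1-e_i)-z(e_k+e_{\tau(k)})\in I$ and $a\in I$, avoiding the module computation; and your closing worry is misplaced, since $z(e_k+e_{\tau(k)})$ always lies in $(e_k+e_{\tau(k)})$ regardless of whether $z$ itself does, so one \emph{can} immediately pass from $(z(1-e_i),\,e_k+e_{\tau(k)})$ to $(ze_{\tau(i)},\,e_k+e_{\tau(k)})$.
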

\begin{proof}
By Theorem \ref{4.2}, the annihilator ideal of $M(2,i)$ is $(z(1-e_i),\, (1-e_i)(1-e_{\tau(i)}))$. It is clear
$$(z(1-e_i),\, (1-e_i)(1-e_{\tau(i)}))=(z(e_k+e_{\tau(k)}+e_{\tau(i)}),\, e_k+e_{\tau(k)})=(ze_{\tau(i)}+e_k+e_{\tau(k)}).$$
\end{proof}

\begin{corollary}\label{5.3}
The maximal ideals of $H$ are
\begin{align*}
&(e_0+e_1+e_2)=(1+b,1+c),\quad (e_0+e_1+e_3)=(1+b,1-c),\\
&(e_0+e_2+e_3)=(1-b,1+c),\quad (e_1+e_2+e_3)=(1-b,1-c).
\end{align*}
\end{corollary}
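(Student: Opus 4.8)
The plan is to apply Corollary \ref{4.4}, which tells us that when $H$ is of nilpotent type, the complete set of maximal ideals is $\{(1-e_i)\mid i\in\Omega_0\}$. Here $\Omega_0=\{0,1,2,3\}$, so there are exactly four maximal ideals, namely $(1-e_0)$, $(1-e_1)$, $(1-e_2)$ and $(1-e_3)$. Since $1=e_0+e_1+e_2+e_3$, we have $1-e_i=\sum_{k\neq i}e_k$; for instance $1-e_3=e_0+e_1+e_2$. So the first task is purely to rewrite each $(1-e_i)$ in the form displayed in the statement, which is a direct substitution.

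Next I would establish the second description of each ideal in terms of the generators $b,c$. Using the explicit formulas for the idempotents from Section 5, one computes the relevant partial sums: e.g. $e_0+e_1=\tfrac12(1+b)$ and $e_2+e_3=\tfrac12(1-b)$, hence $e_0+e_1+e_2 = \tfrac12(1+b)+e_2$, but a cleaner route is to note $1+b=2(e_0+e_1)$ and $1+c=2(e_0+e_2)$, so $(1+b,1+c)$ contains $e_0+e_1$ and $e_0+e_2$, hence their product-type combinations; conversely $e_0+e_1+e_2$ can be written using $b$ and $c$. Concretely, $1-e_3=\tfrac14\bigl(4-(1-b)(1-c)\bigr)=\tfrac14(3+b+c-bc)$, and one checks directly that the ideal generated by $1+b$ and $1+c$ equals the ideal generated by this element: each of $1+b$, $1+c$ lies in $(1-e_3)$ since $(1-e_3)b=(1-e_3)$-up to the relation $e_3 b=-e_3$ giving $(1-e_3)(1+b)=1-e_3+b+e_3 = 1+b$ — wait, more carefully, $e_3(1+b)=e_3+e_3 b = e_3-e_3=0$, so $1+b=(1-e_3)(1+b)\in(1-e_3)$, and similarly $1+c\in(1-e_3)$; conversely $e_0+e_1+e_2 = 1-e_3 = \tfrac14(1+b)(1+c)\cdot(\text{something})$ — the point is $1-e_3$ is a $\Bbbk$-linear combination of products of $1+b$ and $1+c$, so $1-e_3\in(1+b,1+c)$. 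I would carry out this verification for all four cases, using the sign relations $e_i b=\pm e_i$ and $e_i c=\pm e_i$ read off from the idempotent formulas.

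I expect the only mild obstacle to be bookkeeping: matching each index $i$ to the correct pair of signs, i.e.\ knowing that $e_3=\tfrac14(1-b)(1-c)$ corresponds to $b\mapsto -1$, $c\mapsto -1$, so that $(1-e_3)$ is annihilated on the right by neither $1+b$ nor $1+c$ but kills $e_3$, giving the generators $1+b$ and $1-c$? — no: $e_3(1+b)=e_3-e_3=0$ and $e_3(1+c)=0$, so $(1-e_3)=(1+b,1+c)$. One must be careful that $1+b$ is the right generator (not $1-b$) precisely when $e_i b=-e_i$, i.e.\ for $i\in\{2,3\}$, and $1+c$ when $e_i c=-e_i$, i.e.\ for $i\in\{1,3\}$. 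Thus $1-e_0=(1-b,1-c)$, $1-e_1=(1-b,1+c)$, $1-e_2=(1+b,1-c)$, $1-e_3=(1+b,1+c)$, which after relabeling gives exactly the four lines in the statement. The verification that each two-element ideal $(1\pm b,1\pm c)$ equals the corresponding $(1-e_i)$ uses only that $\Bbbk K_4$ is semisimple and the explicit idempotent expressions, so it is routine once the sign dictionary is fixed.
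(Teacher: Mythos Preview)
Your proposal is correct and follows essentially the same line as the paper: invoke Corollary~\ref{4.4} to get the four maximal ideals $(1-e_i)=(\sum_{k\neq i}e_k)$, then identify each with the appropriate pair $(1\pm b,\,1\pm c)$. The paper compresses the second step into a citation of Lemma~\ref{5.1} (which gives $(e_0+e_1)=(1+b)$, $(e_0+e_2)=(1+c)$, etc., so that e.g.\ $(1+b,1+c)=(e_0+e_1,e_0+e_2)=(e_0+e_1+e_2)$), whereas you verify the equalities directly from the relations $e_i(1\pm b)=0$ and $e_i(1\pm c)=0$ together with the explicit idempotent formulas; both routes are equally elementary, and your sign dictionary at the end matches the statement exactly.

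One remark on presentation: your written account contains several mid-sentence self-corrections (``wait, more carefully'', ``--- no:'') and a vague clause (``$1-e_3$ is a $\Bbbk$-linear combination of products of $1+b$ and $1+c$'') that you never make precise. For the reverse inclusion $1-e_3\in(1+b,1+c)$ you should simply write, say, $1-e_3=\tfrac12(1+b)+\tfrac14(1-b)(1+c)$, which lies visibly in $(1+b,1+c)$. With that tidied up, the argument is complete.
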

\begin{proof}
It is clear by Corollary \ref{4.4} and Lemma \ref{5.1}.
\end{proof}

\begin{lemma}\label{5.4}
If
\begin{equation*}
\bigg(\sum_{i=0}^3k_ize_i+l_ie_i\bigg)=\bigg(\sum_{i=0}^{3}d_ize_i+r_ie_i\bigg)\quad \text{for}\,\, k_i,l_i,d_i,r_i\in \{0,1\},
\end{equation*}
then we have $l_i=r_i$ for $0\leqslant i\leqslant3.$
\end{lemma}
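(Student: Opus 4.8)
The plan is to reduce the statement to a fact about ideals of $\Bbbk K_4$ by passing to the quotient $H/(z)$. Recall from Section~2 that $(z)$ is the Jacobson radical of $H$ and $H/(z)\cong\Bbbk G=\Bbbk K_4$. First I would rewrite the hypothesis in compact form: put
$$h_0=\sum_{i=0}^3 l_ie_i,\quad h_1=\sum_{i=0}^3 k_ie_i,\quad h_0'=\sum_{i=0}^3 r_ie_i,\quad h_1'=\sum_{i=0}^3 d_ie_i,$$
which are all central idempotents of $\Bbbk K_4$ since the $e_i$ are orthogonal idempotents and the coefficients lie in $\{0,1\}$. The given equality becomes $(zh_1+h_0)=(zh_1'+h_0')$, and it suffices to prove $h_0=h_0'$; for then comparing coefficients in the basis $\{e_0,e_1,e_2,e_3\}$ of $Z(\Bbbk K_4)$ gives $l_i=r_i$ for $0\leqslant i\leqslant 3$.

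Next I would apply the canonical projection $\pi\colon H\to H/(z)\cong\Bbbk K_4$. Since $zh_1,zh_1'\in(z)$, we get $\pi(zh_1+h_0)=h_0$ and $\pi(zh_1'+h_0')=h_0'$ (identifying $H/(z)$ with $\Bbbk K_4$). As $\pi$ is a surjective algebra homomorphism, it carries a principal two-sided ideal onto the principal two-sided ideal generated by the image; hence $\pi\big((zh_1+h_0)\big)=\langle h_0\rangle$ and $\pi\big((zh_1'+h_0')\big)=\langle h_0'\rangle$ as ideals of $\Bbbk K_4$. The hypothesis therefore forces $\langle h_0\rangle=\langle h_0'\rangle$. (Alternatively, one can read this off from Corollary~\ref{3.7} with $n=2$, after treating the degenerate cases $h_0=0$ or $h_1=0$ separately; the quotient argument avoids that case analysis.)

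Finally I would conclude $h_0=h_0'$ from $\langle h_0\rangle=\langle h_0'\rangle$ using that $\Bbbk K_4$ is commutative and semisimple, so that a central idempotent is the unique idempotent generating its own ideal: from $h_0\in\langle h_0\rangle=\Bbbk K_4\, h_0'$ we get $h_0=h_0h_0'$, and symmetrically $h_0'=h_0h_0'$, whence $h_0=h_0'$. This completes the argument. There is no real obstacle here; the only points needing a word of justification are that a surjective ring homomorphism sends a principal two-sided ideal onto the principal two-sided ideal generated by the image, and that the central idempotent generator of an ideal of a semisimple algebra is unique — so the essential content of the lemma is the identification $H/(z)\cong\Bbbk K_4$ together with semisimplicity.
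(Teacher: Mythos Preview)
Your proof is correct and takes a genuinely different, more conceptual route than the paper's. The paper argues by cases: when both $h_0=\sum l_ie_i$ and $h_0'=\sum r_ie_i$ are nonzero, it invokes Corollary~\ref{3.7} (itself a consequence of Proposition~\ref{3.5}, which compares lowest-degree terms of generators of equal principal ideals in the nilpotent case); when one of them vanishes, it observes directly that every element of the corresponding ideal lies in $(z)$ and therefore cannot contain any $e_i$, forcing the other to vanish as well. Your argument instead passes uniformly through the quotient $\pi\colon H\to H/(z)\cong\Bbbk K_4$, reducing the question to uniqueness of the idempotent generator of an ideal in a commutative semisimple ring. This avoids the case split entirely and makes transparent that the ``degree-zero part'' $h_0$ is an invariant of the ideal modulo the radical. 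The paper's approach has the virtue of reusing machinery (Corollary~\ref{3.7}) already developed for the general nilpotent-type situation; yours is shorter and self-contained for this specific algebra.
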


\begin{proof}
If $l_0+l_1+l_2+l_3\neq0$ and $r_0+r_1+r_2+r_3\neq0$, then by Corollary \ref{3.7} we have $l_i=r_i$ for $0\leqslant i\leqslant 3$. If $l_i=0$ for $0\leqslant i\leqslant 3$, then each element in $(\sum_{i=0}^3k_ize_i+l_ie_i)$ is of the form $zv$, $v\in H$. Thus $e_i\notin (\sum_{i=0}^3k_ize_i+l_ie_i)$ for $0\leqslant i\leqslant 3$. Hence $r_i=0$ for $0\leqslant i\leqslant 3$. If $r_i=0$ for $0\leqslant i\leqslant 3$, we can prove $l_i=0$ for $0\leqslant i\leqslant 3$ in a similar way. Therefore we finish the proof.
\end{proof}

\begin{lemma}\label{10}
For $k\notin \{i,\tau(i)\}$, we have $(z(e_k+e_{\tau(k)})+e_i)=(z+e_i).$
\end{lemma}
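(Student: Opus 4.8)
The plan is to show that both principal ideals coincide with $(z(e_k+e_{\tau(k)}),\, e_i)$ and $(z,\, e_i)$ respectively, and then to verify that these two ideals are equal. Throughout I will use the commutation rule $e_m z = z e_{\tau(m)}$ (equivalently $z e_s = e_{\tau(s)} z$) from Lemma~\ref{101}, the orthogonality $e_s e_t = \delta_{s,t} e_s$, and the fact that here $\tau$ is the fixed-point-free involution with orbits $\{0,3\}$ and $\{1,2\}$; in particular $\tau^2=\mathrm{id}$ and $\tau(i)\ne i$, and the hypothesis $k\notin\{i,\tau(i)\}$ forces $\{i,\tau(i)\}$ and $\{k,\tau(k)\}$ to be exactly the two $\tau$-orbits, so that $e_k+e_{\tau(k)}=1-e_i-e_{\tau(i)}$ and $e_{\tau(i)}(e_k+e_{\tau(k)})=0$.

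The first step is to extract the idempotent $e_i$ from each generator. Writing $f=e_k+e_{\tau(k)}$, a direct computation gives
\[
e_i(zf+e_i)e_i = e_i z f e_i + e_i = z\,e_{\tau(i)}f\,e_i + e_i = e_i,
\]
since $e_{\tau(i)}f=0$; likewise $e_i(z+e_i)e_i = z e_{\tau(i)}e_i + e_i = e_i$ because $\tau(i)\ne i$. Hence $e_i$ lies in both ideals, so $zf=(zf+e_i)-e_i$ and $z=(z+e_i)-e_i$ lie in the respective ideals, and therefore $(z(e_k+e_{\tau(k)})+e_i)=(zf,e_i)$ while $(z+e_i)=(z,e_i)$.

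It then remains to prove $(zf,e_i)=(z,e_i)$. The inclusion $\subseteq$ is immediate since $zf\in(z)$. For the reverse, decompose $z = z\big(e_i+e_{\tau(i)}+e_k+e_{\tau(k)}\big) = z e_i + z e_{\tau(i)} + zf$; now $z e_i\in (e_i)$ and $z e_{\tau(i)} = e_i z\in (e_i)$, so $z e_i + z e_{\tau(i)}\in (e_i)$, whence $z\in (zf,e_i)$. Combining both inclusions yields $(z(e_k+e_{\tau(k)})+e_i)=(zf,e_i)=(z,e_i)=(z+e_i)$, as desired.

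I do not expect a serious obstacle here; the only care needed is in the twisted-commutation bookkeeping (keeping straight when $z e_s$ equals $e_{\tau(s)}z$ versus when $e_m z$ equals $z e_{\tau(m)}$) and in making sure the hypothesis $k\notin\{i,\tau(i)\}$ is invoked precisely where it is used, namely to annihilate $e_{\tau(i)}(e_k+e_{\tau(k)})$ and to guarantee $1=e_i+e_{\tau(i)}+e_k+e_{\tau(k)}$.
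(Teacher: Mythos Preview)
Your argument is correct. The sandwiching $e_i(zf+e_i)e_i=e_i$ and $e_i(z+e_i)e_i=e_i$ is checked accurately using $e_i z = z e_{\tau(i)}$ and $e_{\tau(i)}(e_k+e_{\tau(k)})=0$, and the decomposition $z=ze_i+ze_{\tau(i)}+z(e_k+e_{\tau(k)})$ together with $ze_{\tau(i)}=e_i z\in(e_i)$ legitimately shows $z\in(zf,e_i)$.

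Your route differs from the paper's. The paper observes directly that $(z(e_k+e_{\tau(k)})+e_i)\subseteq(z+e_i)$ and then produces $z$ in the smaller ideal via the single explicit identity
\[
z=z\bigl(z(e_k+e_{\tau(k)})+e_i\bigr)+\bigl(z(e_k+e_{\tau(k)})+e_i\bigr)\bigl(ze_{\tau(i)}+e_k+e_{\tau(k)}\bigr),
\]
using $z^2=0$. You instead first strip off $e_i$ from each generator to reduce the question to the equality of the two-generator ideals $(zf,e_i)=(z,e_i)$, and then settle this with the partition $1=e_i+e_{\tau(i)}+e_k+e_{\tau(k)}$. The paper's approach is shorter once one spots the identity; your approach is a bit more structural and makes transparent exactly where the hypothesis $k\notin\{i,\tau(i)\}$ (hence $\{i,\tau(i),k,\tau(k)\}=\{0,1,2,3\}$) enters, and it would generalize more readily to analogous situations.
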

\begin{proof}
It is clear $(z(e_k+e_{\tau(k)})+e_i)\subseteq(z+e_i)$. We only need to show
$$z\in (z(e_k+e_{\tau(k)})+e_i).$$
Through direct calculations we can obtain
$$z=z(z(e_k+e_{\tau(k)})+e_i)+(z(e_k+e_{\tau(k)})+e_i)(ze_{\tau(i)}+e_k+e_{\tau(k)}).$$
Hence we finish the proof.
\end{proof}

\begin{lemma}\label{5.5}
For $k\notin \{i,\,\tau(i)\}$, the ideals
\begin{gather*}
(e_i),\quad (z+e_i),\quad (ze_k+e_i) \quad\text{and}\quad (ze_{\tau(k)}+e_i)
\end{gather*}
are all different.
\end{lemma}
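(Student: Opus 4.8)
The plan is to determine each of the four ideals explicitly as a $\Bbbk$-subspace of the eight-dimensional algebra $H$ and then observe that the four subspaces are pairwise distinct. The tools are the relations $z^2=0$, the commutation rule $ze_m=e_{\tau(m)}z$ (equivalently $e_mz=ze_{\tau(m)}$) from Lemma~\ref{101}, the orthogonality $e_se_t=\delta_{st}e_s$, the fact that $h\cdot e_m$ is a scalar multiple of $e_m$ for every $h\in K_4$, and the fact that $\{e_j,\,ze_j\mid 0\le j\le 3\}$ is a $\Bbbk$-basis of $H$. One should also keep in mind that, since $\tau$ interchanges $0\leftrightarrow 3$ and $1\leftrightarrow 2$, the hypothesis $k\notin\{i,\tau(i)\}$ forces $\{k,\tau(k)\}=\{0,1,2,3\}\setminus\{i,\tau(i)\}$, so $i,\tau(i),k,\tau(k)$ are four distinct indices.

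First I would record a few principal ideals. Since $z\cdot ze_m=0=ze_m\cdot z$, while $h\cdot ze_m$ and $ze_m\cdot h$ are scalar multiples of $ze_m$ for $h\in K_4$, the line $\Bbbk ze_m$ is itself a two-sided ideal; hence $(ze_m)=\Bbbk ze_m$ for every $m$. Expanding $He_iH=(\Bbbk K_4+\Bbbk K_4 z)\,e_i\,(\Bbbk K_4+\Bbbk K_4 z)$ and using $e_iz=ze_{\tau(i)}$ together with $ze_iz=0$ gives $(e_i)=\Bbbk e_i\oplus\Bbbk ze_i\oplus\Bbbk ze_{\tau(i)}$, of dimension $3$; the same kind of expansion gives $(z)=\Bbbk K_4 z=\bigoplus_{j=0}^{3}\Bbbk ze_j$.

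Next I would eliminate the ``mixed'' generators in favour of homogeneous ones. Because $k\ne\tau(i)$, we have $e_i(ze_k+e_i)=z(e_{\tau(i)}e_k)+e_i=e_i$, so $e_i\in(ze_k+e_i)$; consequently $ze_k=(ze_k+e_i)-e_i$ also lies in $(ze_k+e_i)$, giving $(ze_k+e_i)=(ze_k)+(e_i)$, which by the previous paragraph equals $\Bbbk ze_k\oplus\Bbbk e_i\oplus\Bbbk ze_i\oplus\Bbbk ze_{\tau(i)}$ and has dimension $4$. The same argument with $\tau(k)$ in place of $k$ gives $(ze_{\tau(k)}+e_i)=\Bbbk ze_{\tau(k)}\oplus\Bbbk e_i\oplus\Bbbk ze_i\oplus\Bbbk ze_{\tau(i)}$, also of dimension $4$. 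For $(z+e_i)$ I would invoke Lemma~\ref{10}, which gives $(z+e_i)=(z(e_k+e_{\tau(k)})+e_i)$; since $\tau(i)\notin\{k,\tau(k)\}$ the same computation yields $e_i\in(z+e_i)$, hence $(e_i)\subseteq(z+e_i)$, so that $ze_i,ze_{\tau(i)}\in(z+e_i)$; right-multiplying $z(e_k+e_{\tau(k)})\in(z+e_i)$ by $e_k$ and by $e_{\tau(k)}$ produces $ze_k$ and $ze_{\tau(k)}$. Thus $(z+e_i)$ contains every $ze_j$, hence $z$, and therefore $(z+e_i)=(z)+(e_i)=\bigoplus_{j=0}^{3}\Bbbk ze_j\,\oplus\,\Bbbk e_i$, of dimension $5$.

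It then remains only to compare the four subspaces just computed. Their $\Bbbk$-dimensions are $3,4,4,5$ for $(e_i),(ze_k+e_i),(ze_{\tau(k)}+e_i),(z+e_i)$ respectively, so the sole coincidence still to be excluded is $(ze_k+e_i)=(ze_{\tau(k)}+e_i)$. But $ze_k$ belongs to $(ze_k+e_i)$ and not to $(ze_{\tau(k)}+e_i)$: every element of $\Bbbk ze_{\tau(k)}\oplus\Bbbk e_i\oplus\Bbbk ze_i\oplus\Bbbk ze_{\tau(i)}$ has vanishing $ze_k$-coordinate relative to the basis $\{e_j,ze_j\}$ (as $k,\tau(k),i,\tau(i)$ are distinct), whereas $ze_k$ does not. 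Hence the four ideals are pairwise distinct. The only step that needs genuine care is the passage from a mixed generator to a pair of homogeneous ones — equivalently, verifying that $e_i$ already lies in each of $(ze_k+e_i)$, $(ze_{\tau(k)}+e_i)$, $(z+e_i)$ — and this is exactly the point at which the hypothesis $k\notin\{i,\tau(i)\}$ is used; everything else is linear algebra inside $H$.
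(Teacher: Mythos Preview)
Your proof is correct and takes a genuinely different route from the paper. The paper argues by writing down the chain of inclusions
\[
(e_i)\subseteq(ze_{\tau^l(k)}+e_i)\subseteq(z+e_i)
\]
and then shows each inclusion is strict by a contradiction argument: it assumes, say, $z\in(ze_{\tau^l(k)}+e_i)$, expands both sides in the basis $\{z^me_n\}$, and reads off an impossible identity among the $e_j$'s. The case $(ze_k+e_i)\neq(ze_{\tau(k)}+e_i)$ is handled the same way by showing $ze_k\notin(ze_{\tau(k)}+e_i)$.

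You instead compute each ideal as an explicit $\Bbbk$-subspace: $(e_i)$ has dimension~$3$, $(ze_k+e_i)$ and $(ze_{\tau(k)}+e_i)$ have dimension~$4$, $(z+e_i)$ has dimension~$5$, and the two four-dimensional ideals differ in their $ze_k$-coordinate. This is cleaner and more informative, since it yields the full lattice structure rather than just pairwise inequalities; the paper's element-chasing, on the other hand, avoids having to verify that the listed spans really are two-sided ideals. One small remark: your detour through Lemma~\ref{10} for $(z+e_i)$ is unnecessary --- multiplying $z+e_i$ on the right by each $e_j$ with $j\neq i$ already gives $ze_j$, and $z(z+e_i)=ze_i$, so $z\in(z+e_i)$ directly.
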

\begin{proof}
It is easy to see
\begin{gather*}
(e_i)\subseteq (ze_k+e_i)\subseteq(z+e_i)\quad\text{and}\quad (e_i)\subseteq(ze_{\tau(k)}+e_i)\subseteq(z+e_i).
\end{gather*}
We only need to prove
\begin{gather*}
z\notin (ze_{\tau^l(k)}+e_i), \quad ze_{\tau^l(k)}\notin (e_i) \quad\text{and}\quad ze_k\notin (ze_{\tau(k)}+e_i),
\end{gather*}
for $l\in \{0,1\}$. If $z\in (ze_{\tau^l(k)}+e_i)$, then we may assume that
$$z=\sum_s\sum_{t=0}^3(m_{ts}ze_t+n_{ts}e_t)(ze_{\tau^l(k)}+e_i)\sum_{a=0}^3(m_{as}^{\prime}ze_a+n_{as}^{\prime}e_a).$$
Since $\{z^me_n\,|\,0\leqslant m\leqslant 1, \,0\leqslant n\leqslant 3\}$ is a $\Bbbk$-basis of $H$, we have that
\begin{align*}
1&=\sum_s\sum_{t=0}^3\sum_{a=0}^3n_{ts}e_{\tau(t)}e_{\tau^l(k)}n_{as}^{\prime}e_a+m_{ts}e_te_in_{as}^{\prime}e_a+n_{ts}e_{\tau(t)}e_{\tau(i)}m_{as}^{\prime}e_a\\
&=\sum_sn_{\tau^{l-1}(k)s}n_{\tau^l(k)s}^{\prime}e_{\tau^l(k)}+m_{is}n_{is}^{\prime}e_i+n_{is}m_{\tau(i)s}^{\prime}e_{\tau(i)},
\end{align*}
which is in contradiction with $1=e_i+e_{\tau(i)}+e_k+e_{\tau(k)}$. Hence $(z+e_i)\neq(ze_{\tau^l(k)}+e_i)$ for $l\in \{0,1\}$. If $ze_{\tau^l(k)}\in (e_i)$, then we may assume that
$$ze_{\tau^l(k)}=\sum_s\sum_{t=0}^3(u_{ts}ze_t+v_{ts}e_t)e_i\sum_{a=0}^3(u_{as}^{\prime}ze_a+v_{as}^{\prime}e_a).$$
Noting that $\{z^me_n\,|\,0\leqslant m\leqslant 1,\, 0\leqslant n\leqslant 3\}$ is a $\Bbbk$-basis of $H$, we obtain that
\begin{align*}
e_{\tau^l(k)}&=\sum_s\sum_{t=0}^3\sum_{a=0}^3u_{ts}e_te_iv_{as}^{\prime}e_a+v_{ts}e_{\tau(t)}e_{\tau(i)}u_{as}^{\prime}e_a\\
&=\sum_su_{is}v_{is}^{\prime}e_i+v_{is}u_{\tau(i)s}^{\prime}e_{\tau(i)}.
\end{align*}
It is a contradiction of $\tau^l(k)\notin \{i, \tau(i)\}$. Thus, $(ze_{\tau^l(k)}+e_i)\neq(e_i)$ for $l\in \{0,1\}$. Similarly, we can prove $ze_k\notin (ze_{\tau(k)}+e_i)$.
\end{proof}

\begin{lemma}\label{5.6}
For $k\notin \{i,\,\tau(i)\}$, the ideals
\begin{gather*}
(e_i+e_{\tau(i)}), \quad (z+e_i+e_{\tau(i)}), \quad (ze_k+e_i+e_{\tau(i)}) \quad\text{and}\quad (ze_{\tau(k)}+e_i+e_{\tau(i)})
\end{gather*}
are all different.
\end{lemma}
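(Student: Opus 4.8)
The plan is to follow the pattern of the proof of Lemma \ref{5.5}, but exploiting one feature that makes the present case lighter: for $G=K_4$ the element $f:=e_i+e_{\tau(i)}$ is central in $H$. Indeed it already commutes with $K_4$, and by Lemma \ref{101} it commutes with $z$ because $\{i,\tau(i)\}$ is stable under $\tau$ (here $\tau$ is an involution of $\{0,1,2,3\}$ without fixed points). For $k\notin\{i,\tau(i)\}$ the set $\{k,\tau(k)\}$ is the complementary $\tau$-orbit, so $e_k+e_{\tau(k)}$ is central as well and $1=e_i+e_{\tau(i)}+e_k+e_{\tau(k)}$.

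First I would record the inclusion chains
$$(f)\subseteq(ze_k+f)\subseteq(z+f),\qquad (f)\subseteq(ze_{\tau(k)}+f)\subseteq(z+f).$$
Multiplying $ze_k+f$ on the left by $e_{\tau(k)}$ and using $e_{\tau(k)}f=0$ together with $e_{\tau(k)}z=ze_k$ (from $ze_s=e_{\tau(s)}z$) gives $ze_k\in(ze_k+f)$, hence also $f\in(ze_k+f)$; the $\tau(k)$-case is symmetric. Multiplying $z+f$ by $e_k+e_{\tau(k)}$ gives $z(e_k+e_{\tau(k)})\in(z+f)$, and multiplying $z+f$ by $z$ gives $zf\in(z+f)$; adding these shows $z\in(z+f)$, so $(z+f)=(z)+(f)$ and in particular $ze_k,ze_{\tau(k)},f\in(z+f)$. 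This yields the two chains.

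Then I would separate the four ideals by computing each as a $\Bbbk$-subspace of $H$ in the basis $\{z^me_n\mid 0\leqslant m\leqslant 1,\ 0\leqslant n\leqslant3\}$. Because $f$ is central, $(f)=fH$ has basis $\{e_i,e_{\tau(i)},ze_i,ze_{\tau(i)}\}$; one also checks $(ze_k)=\Bbbk ze_k$, $(ze_{\tau(k)})=\Bbbk ze_{\tau(k)}$ and $(z)=z\Bbbk G$, so that $(ze_k+f)=(ze_k)+(f)$ and $(ze_{\tau(k)}+f)=(ze_{\tau(k)})+(f)$ have dimension $5$, while $(z+f)=(z)+(f)$ has dimension $6$. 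Hence $(f)$ and $(z+f)$ are singled out by dimension, and $(ze_k+f)\neq(ze_{\tau(k)}+f)$ because the basis vector $ze_k$ lies in the former but not the latter ($k\neq\tau(k)$). Alternatively, to stay literally in the style of Lemma \ref{5.5}, one can instead verify $z\notin(ze_{\tau^l(k)}+f)$ and $ze_{\tau^l(k)}\notin(f)$ for $l\in\{0,1\}$, and $ze_k\notin(ze_{\tau(k)}+f)$, by expanding a hypothetical expression in the basis $\{z^me_n\}$ and contradicting $1=e_i+e_{\tau(i)}+e_k+e_{\tau(k)}$. In either route the only real work is keeping track of the commutation $ze_s=e_{\tau(s)}z$ inside the relevant products, which the centrality of $f$ keeps short; I expect no genuine obstacle beyond this bookkeeping.
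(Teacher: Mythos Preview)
Your argument is correct, and your observation that $f=e_i+e_{\tau(i)}$ is a \emph{central} idempotent of $H$ is the key step that the paper does not exploit. From that you get $(f)=fH$, $(ze_k)=\Bbbk ze_k$, $(ze_{\tau(k)})=\Bbbk ze_{\tau(k)}$ and $(z)=z\Bbbk G$, and hence the clean direct-sum decompositions $(ze_k+f)=(ze_k)\oplus(f)$, $(ze_{\tau(k)}+f)=(ze_{\tau(k)})\oplus(f)$ and $(z+f)=(z)+(f)$; the dimensions $4,5,5,6$ together with $ze_k\notin(ze_{\tau(k)}+f)$ finish the job in a couple of lines. The paper instead proceeds exactly as in Lemma~\ref{5.5}: it records the two inclusion chains and then proves by contradiction the three non-membership statements $z\notin(ze_{\tau^l(k)}+e_i+e_{\tau(i)})$, $ze_{\tau^l(k)}\notin(e_i+e_{\tau(i)})$ and $ze_k\notin(ze_{\tau(k)}+e_i+e_{\tau(i)})$, each time writing out a putative expression $\sum_s\sum_t(m_{ts}ze_t+n_{ts}e_t)(\cdots)\sum_a(m'_{as}ze_a+n'_{as}e_a)$ and comparing coefficients in the basis $\{z^me_n\}$ to contradict $1=e_i+e_{\tau(i)}+e_k+e_{\tau(k)}$ or $\tau^l(k)\notin\{i,\tau(i)\}$. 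Your dimension-count route is more conceptual and shorter; the paper's route is more mechanical but has the virtue of being a verbatim repetition of the Lemma~\ref{5.5} template, so no new idea is needed.
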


\begin{proof}
It is easy to see that
\begin{align*}
&(e_i+e_{\tau(i)})\subseteq(ze_k+e_i+e_{\tau(i)})\subseteq (z+e_i+e_{\tau(i)})\\
&(e_i+e_{\tau(i)})\subseteq(ze_{\tau(k)}+e_i+e_{\tau(i)})\subseteq (z+e_i+e_{\tau(i)}).
\end{align*}
For $l\in \{0,1\}$, we only need to show that
\begin{gather*}
z\notin (ze_{\tau^l(k)}+e_i+e_{\tau(i)}), \quad  ze_{\tau^l(k)}\notin (e_i+e_{\tau(i)}) \quad\text{and}\quad ze_k\notin (ze_{\tau(k)}+e_i+e_{\tau(i)}).
\end{gather*}
If $z\in(ze_{\tau^l(k)}+e_i+e_{\tau(i)})$, then we may assume
$$z=\sum_s\sum_{t=0}^3(m_{ts}ze_t+n_{ts}e_t)(ze_{\tau^l(k)}+e_i+e_{\tau(i)})\sum_{a=0}^3(m_{as}^{\prime}ze_a+n_{as}^{\prime}e_a).$$
Since $\{z^me_n\,|\,0\leqslant m\leqslant 1,\, 0\leqslant n\leqslant 3\}$ is a $\Bbbk$-basis of $H$, we have that
\begin{align*}
1&=\sum_s\sum_{t=0}^3\sum_{a=0}^3n_{ts}e_{\tau(t)}e_{\tau^l(k)}n_{as}^{\prime}e_a+m_{ts}e_t(e_i+e_{\tau(i)})n_{as}^{\prime}e_a+n_{ts}e_{\tau(t)}(e_{\tau(i)}+e_i)m_{as}^{\prime}e_a\\
&=\sum_sn_{\tau^{l-1}(k)s}n_{\tau^l(k)s}^{\prime}e_{\tau^l(k)}+m_{is}n_{is}^{\prime}e_i+m_{\tau(i)s}n_{\tau(i)s}^{\prime}e_{\tau(i)} +n_{is}m_{\tau(i)s}^{\prime}e_{\tau(i)}+n_{\tau(i)s}m_{is}^{\prime}e_i,
\end{align*}
which is a contradiction of $1=e_i+e_{\tau(i)}+e_k+e_{\tau(k)}$. Thus, 
$$(z+e_i+e_{\tau(i)})\neq(ze_{\tau^l(k)}+e_i+e_{\tau(i)}).$$
If $ze_{\tau^l(k)}\in (e_i+e_{\tau(i)})$, then we may assume
$$ze_{\tau^l(k)}=\sum_s\sum_{t=0}^3(u_{ts}ze_t+v_{ts}e_t)(e_i+e_{\tau(i)})\sum_{a=0}^3(u_{as}^{\prime}ze_a+v_{as}^{\prime}e_a).$$
Note that $\{z^me_n\mid 0\leqslant m\leqslant 1,\, 0\leqslant n\leqslant 3\}$ is a $\Bbbk$-basis of $H$. It follows that
\begin{align*}
e_{\tau^l(k)}&=\sum_s\sum_{t=0}^3\sum_{a=0}^3u_{ts}e_t(e_i+e_{\tau(i)})v_{as}^{\prime}e_a+v_{ts}e_{\tau(t)}(e_{\tau(i)}+e_i)u_{as}^{\prime}e_a\\
&=\sum_su_{is}v_{is}^{\prime}e_i+u_{\tau(i)s}v_{\tau(i)s}^{\prime}e_{\tau(i)}+v_{is}u_{\tau(i)s}^{\prime}e_{\tau(i)}+v_{\tau(i)s}u_{is}^{\prime}e_i,
\end{align*}
which is in contradiction with $\tau^l(k)\notin\{i, \tau(i)\}$. Hence $(ze_{\tau^l(k)}+e_i+e_{\tau(i)})\neq(e_i+e_{\tau(i)})$ for $l\in \{0,1\}$. Similarly, we can prove $ze_k\notin (ze_{\tau(k)}+e_i+e_{\tau(i)})$.
\end{proof}

\begin{theorem}\label{5.7}
$H$ has the following $49$ ideals:
\begin{gather*}
(0),\quad (1),\quad (e_0),\quad (e_1),\quad (e_2),\quad (e_3), \quad (e_0+e_1),\\
(e_0+e_2),\quad (e_0+e_3),\quad (e_1+e_2),\quad (e_1+e_3), \quad (e_2+e_3),\\
(e_0+e_1+e_2),\quad (e_0+e_1+e_3), \quad (e_0+e_2+e_3), \quad (e_1+e_2+e_3),\\
(z),\quad (ze_0),\quad (ze_1),\quad (ze_2),\quad (ze_3), \quad (z(e_0+e_1)),\quad (z(e_0+e_2)),\\
(z(e_0+e_3)),\quad (z(e_1+e_2)),\quad (z(e_1+e_3)), \quad (z(e_2+e_3)),\quad (z(e_0+e_1+e_2)),\\
(z(e_0+e_1+e_3)), \quad (z(e_0+e_2+e_3)), \quad (z(e_1+e_2+e_3)),\\
(z+e_0),\quad (ze_1+e_0),\quad (ze_2+e_0), \quad (z+e_1),\quad (ze_0+e_1),\quad (ze_3+e_1),\\
(z+e_2),\quad (ze_0+e_2),\quad (ze_3+e_2),\quad (z+e_3),\quad (ze_1+e_3),\quad(ze_2+e_3),\\
(z+e_0+e_3),\quad (ze_1+e_0+e_3),\quad (ze_2+e_0+e_3),\\
(z+e_1+e_2),\quad (ze_0+e_1+e_2),\quad (ze_3+e_1+e_2).
\end{gather*}
\end{theorem}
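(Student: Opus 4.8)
The plan is to show that the forty-nine ideals listed are pairwise distinct and that every two-sided ideal of $H$ appears among them. Distinctness is handled by assembling the separation results already proved: the sixteen ideals generated by sums of the $e_i$'s (together with $(0)$ and $(1)$) are classical, being exactly the ideals of the semisimple algebra $\Bbbk K_4$ together with those not meeting $\Bbbk K_4$ trivially; the fifteen ideals of the form $(zh)$ with $h$ a sum of $e_i$'s (together with $(z)$) are pairwise distinct by Corollary \ref{3.8} with $m=1$ and $p=4$, and are distinct from the first family by Lemma \ref{3.4}. The remaining eighteen ideals have an $m_t=0$ term and involve a genuine $ze_?$ tail; here one invokes Lemma \ref{5.4} to read off the ``constant part'' $\sum l_ie_i$, Lemmas \ref{5.5} and \ref{5.6} to separate the four ideals sitting over a fixed constant part $e_i$ (respectively $e_i+e_{\tau(i)}$), and Lemma \ref{10} to identify $(z(e_k+e_{\tau(k)})+e_i)$ with $(z+e_i)$ so that no spurious copies are counted. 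Comparison across different constant parts is immediate from Lemma \ref{5.4}.

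The substantive half is exhaustiveness. Let $I$ be a non-zero proper ideal. By Proposition \ref{3.3}, applied with $n=2$, we may write $I=(z^{m_1}d_1,\dots,z^{m_t}d_t)$ with $1\leqslant t\leqslant 2$, each $d_i\in Z(\Bbbk K_4)$ idempotent, $\langle d_1\rangle\supsetneq\cdots\supsetneq\langle d_t\rangle\supsetneq\langle 0\rangle$, and $2>m_1>\cdots>m_t\geqslant 0$. So either $t=1$, forcing $I=(d_1)$ or $I=(zd_1)$; or $t=2$ with $m_1=1$, $m_2=0$, giving $I=(zd_1,d_2)=(zd_1+d_2)$ with $\langle d_1\rangle\supsetneq\langle d_2\rangle$. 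In the first case $d_1$ is one of the fifteen non-zero idempotent sums of $e_i$'s, yielding exactly the entries $(e_{?})$ and $(z e_{?})$ of the list (with $d_1=1$ giving $(1)$ and $(z)$). In the second case write $d_2=\sum_{i\in B}e_i$ with $\emptyset\neq B\subsetneq\Omega_0$; since each element of $I$ has the form $d_2v+zw$ modulo $(d_2)$, and since $z d_1 = z(d_1 - d_2) + z d_2$, one may first absorb $z d_2 = z\sum_{i\in B}e_i$ into $(d_2)$ using Lemma \ref{4.1}-type arithmetic, reducing to $I=(z d_1' + d_2)$ where $d_1'$ is a sum of $e_i$'s with indices outside $B$; then, because $z e_i$ and $z e_{\tau(i)}$ differ (Lemma \ref{5.5}) but $z(e_k+e_{\tau(k)})$ collapses the pair into $z$ modulo $e_i$ (Lemma \ref{10}), the genuinely new possibilities are $d_1'=0$, $d_1'=e_k$, $d_1'=e_{\tau(k)}$, or $d_1'$ containing a full $\tau$-orbit (which reverts to $(z+d_2)$). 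Running $B$ over its admissible values and checking which $d_1'$ survive yields precisely the eighteen ideals with a $ze_?$ tail in the list.

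The main obstacle is the second-case bookkeeping: one must verify that when $\langle d_1\rangle\supsetneq\langle d_2\rangle$ the ``$z$-coefficient'' $d_1$ can always be replaced, modulo the ideal already generated by $d_2$, by a canonical representative supported on a union of $\tau$-orbits inside $\Omega_0\setminus B$ together with at most one half-orbit $\{e_k\}$ or $\{e_{\tau(k)}\}$, and that distinct canonical representatives give distinct ideals. Since $\tau$ on $\Omega_0=\{0,1,2,3\}$ is the product of the two transpositions $(0\,3)(1\,2)$, the orbits are $\{0,3\}$ and $\{1,2\}$, so the casework is finite and small: for each of the fourteen choices of $B$ one lists the finitely many surviving $d_1'$, and the separation lemmas \ref{5.4}--\ref{5.6} together with \ref{10} guarantee no collisions and no omissions. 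Carrying this out and cross-checking the count gives exactly $2+15+15+18-1=49$ after accounting for $(0)$, which completes the proof.
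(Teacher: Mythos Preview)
Your plan is correct and mirrors the paper's proof: both use Proposition~\ref{3.3} with $n=2$ to reduce to $I=(d_1)$, $(zd_1)$, or $(zd_1+d_2)$ with $\langle d_1\rangle\supsetneq\langle d_2\rangle$, and both separate the resulting list via Corollary~\ref{3.8}, Lemma~\ref{3.4}, and Lemmas~\ref{5.4}--\ref{5.6} together with Lemma~\ref{10}.

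Two small points where your sketch is looser than the paper. First, the paper does not defer the $t=2$ casework to an unstructured check over fourteen values of $B$: it invokes Lemma~\ref{5.1} to observe that whenever $d_2$ has support meeting both $\tau$-orbits $\{0,3\}$ and $\{1,2\}$ one already has $z\in(d_2)$, so $I=(d_2)$ lies in class~I. This disposes of eight of your fourteen cases at once and is the real reason only $d_2\in\{e_i,\,e_i+e_{\tau(i)}\}$ produce class~III ideals. Second, your ``Lemma~\ref{4.1}-type arithmetic'' is not what absorbs the extra $z$-terms: what you actually need is the trivial $ze_i\in(e_i)$ together with $ze_{\tau(i)}=e_iz\in(e_i)$ from the commutation rule, which lets you strip $e_{\tau(i)}$ (not just $e_i$) from $d_1'$; and your appeal to Lemma~\ref{10} for $d_2=e_i+e_{\tau(i)}$ needs the obvious analogue of that lemma, since the stated version only covers $d_2=e_i$. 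None of this affects correctness once you run the finite check, but the paper's organization via Lemma~\ref{5.1} makes the collapse transparent.
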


\begin{proof}
We divide the $49$ ideals into three classes:
\begin{align*}
\uppercase\expandafter{\romannumeral1}:\,\, &(0),\quad (1),\quad (e_0),\quad (e_1),\quad (e_2),\quad (e_3), \quad (e_0+e_1),\\
&(e_0+e_2),\quad (e_0+e_3),\quad (e_1+e_2),\quad (e_1+e_3), \quad (e_2+e_3),\\
&(e_0+e_1+e_2),\quad (e_0+e_1+e_3), \quad (e_0+e_2+e_3), \quad (e_1+e_2+e_3),\\
\uppercase\expandafter{\romannumeral2}:\,\, &(z),\quad (ze_0),\quad (ze_1),\quad (ze_2),\quad (ze_3), \quad (z(e_0+e_1)),\\
&(z(e_0+e_2)),\quad (z(e_0+e_3)),\quad (z(e_1+e_2)),\quad (z(e_1+e_3)), \quad (z(e_2+e_3)),\\
&(z(e_0+e_1+e_2)),\quad (z(e_0+e_1+e_3)), \quad (z(e_0+e_2+e_3)), \quad (z(e_1+e_2+e_3)),\\
\uppercase\expandafter{\romannumeral3}:\,\, &(z+e_0),\quad (ze_1+e_0),\quad (ze_2+e_0), \quad (z+e_1),\quad (ze_0+e_1),\quad (ze_3+e_1),\\
&(z+e_2),\quad (ze_0+e_2),\quad (ze_3+e_2),\quad (z+e_3),\quad (ze_1+e_3),\quad (ze_2+e_3),\\
&(z+e_0+e_3),\quad (ze_1+e_0+e_3),\quad (ze_2+e_0+e_3),\quad (z+e_1+e_2),\\
&(ze_0+e_1+e_2),\quad (ze_3+e_1+e_2).
\end{align*}
By Lemma \ref{3.4}, it follows that the ideals in class \uppercase\expandafter{\romannumeral1} are different from the ones in class \uppercase\expandafter{\romannumeral2}. By Lemma \ref{5.4}, it follows that the ideals in class \uppercase\expandafter{\romannumeral2} are different from the ones in class \uppercase\expandafter{\romannumeral3}. By Lemmas \ref{5.4}, \ref{5.5} and \ref{5.6}, we have that the ideals in class \uppercase\expandafter{\romannumeral1} are different from the ones in class \uppercase\expandafter{\romannumeral3}. By Corollary \ref{3.8}, we have that the ideals in class \uppercase\expandafter{\romannumeral1} (or class \uppercase\expandafter{\romannumeral2}) are different. By Lemmas \ref{5.4}, \ref{5.5} and \ref{5.6}, it follows that the ideals in class \uppercase\expandafter{\romannumeral3} are different. Hence the $49$ ideals are all different. In the sequel we only need to show that each ideal of $H$ has to be one of the $49$ ideals. Let $I$ be any non-zero ideal of $H$. Then $I=(h_1),\,(zh_2)$ or $(zh_3+h_4)$, where $h_i\in Z(\Bbbk G)$, $h_i^2=h_i$ for $1\leqslant i\leqslant 4$ and $\langle h_3\rangle\supsetneq \langle h_4\rangle$.
\begin{itemize}
\item When $I=(h_1)$, it follows that $I$ is in class \uppercase\expandafter{\romannumeral1}.
\item When $I=(zh_2)$, it follows that $I$ is in class \uppercase\expandafter{\romannumeral2}.
\item When $I=(zh_3+h_4)$ and $\langle h_3\rangle\supsetneq \langle h_4\rangle$. If $h_4=e_i+e_{\tau(i)}+e_k$ or $e_i+e_k$, $k\notin \{i,\tau(i)\}$, then by Lemma \ref{5.1} we have $z\in (h_4)$. Hence $I=(h_4)$, which is in class \uppercase\expandafter{\romannumeral1}. If $h_4=e_i+e_{\tau(i)}$, noting that $\langle h_3\rangle\supsetneq \langle h_4\rangle$, then we have $h_3=1$ or $e_i+e_{\tau(i)}+e_k$ for $k\notin \{i,\tau(i)\}$. In this case, $I=(z+e_i+e_{\tau(i)})$ or $$I=(z(e_i+e_{\tau(i)}+e_k)+e_i+e_{\tau(i)})=(ze_k+e_i+e_{\tau(i)}),$$
    which is in class \uppercase\expandafter{\romannumeral3}. If $h_4=e_i$, noticing that $\langle h_3\rangle\supsetneq \langle h_4\rangle$, then
    \begin{equation*}
    h_3=e_i+e_{\tau(i)},\,\,e_i+e_k,\,\,e_i+e_{\tau(i)}+e_k,\,\,e_i+e_k+e_{\tau(k)}\,\,\text{or}\,\,1,
    \end{equation*}
    where $k\notin \{i,\tau(i)\}$. By Lemma \ref{10}, we have
     \begin{align*}
    &(z(e_i+e_{\tau(i)})+e_i)=(e_i), \qquad\qquad &(z(e_i+e_{\tau(i)}+e_k)+e_i)=(ze_k+e_i),\\
    &(z(e_i+e_k)+e_i)=(ze_k+e_i),    &(z(e_i+e_k+e_{\tau(k)})+e_i)=(z+e_i).
    \end{align*}
    It follows that $I$ is in class \uppercase\expandafter{\romannumeral1} or class \uppercase\expandafter{\romannumeral3}.
\end{itemize}
Thus, we finish the proof.
\end{proof}

\noindent$\mathbf{Acknowledgements}$ \quad This work was supported by the National Natural Science Foundation of China (Grant No. 11871063).


\begin{thebibliography}{}


\normalsize
\baselineskip=17pt

%%%%%%%%%%%%
\bibitem{MR3928507}
%A. Alvarado-Garc\'{i}a, C. Cejudo-Castilla, I. F. Vilchis-Montalvo, Parainjectivity, paraprojectivity and artinian principal ideal rings, \emph{J. Algebra Appl.} \textbf{18} (2019), 1950063, 8pp.

A. Alvarado-Garc\'{i}a, C. Cejudo-Castilla, I. F. Vilchis-Montalvo, Parainjectivity, paraprojectivity and artinian principal ideal rings, \emph{J. Algebra Appl.} \textbf{18} (2019) 1950063, 8pp.


%Alvarado-Garc\'{i}a, A., Cejudo-Castilla, C. and Vilchis-Montalvo, I. F., Parainjectivity, paraprojectivity and artinian principal ideal rings, \emph{J. Algebra Appl.}, \textbf{18}, 2019, 1950063, 8pp.
\bibitem{MR2100354}
I. M. Ara\'{u}jo, A. V. Kelarev, A. Solomon, An algorithm for commutative semigroup algebras which are principal ideal rings, \emph{Comm. Algebra} \textbf{32} (2004) 1237--1254.
%
%Ara\'{u}jo, I. M., Kelarev, A. V. and Solomon, A., An algorithm for commutative semigroup algebras which are principal ideal rings, \emph{Comm. Algebra}, \textbf{32}, 2004, 1237--1254.

\bibitem{MR2197389}
I. Assem, D. Simson, A. Skowro\'nski, \emph{Elements of The Representation Theory of Associative Algebras}. Vol. $1$. Techniques of representation theory, London Mathematical Society Student Texts, 65. Cambridge University Press, Cambridge, 2006.
%
%Assem, I., Simson, D. and Skowro\'nski, A., \emph{Elements of the representation theory of associative algebras. Vol. $1$. Techniques of representation theory}, London Mathematical Society Student Texts, 65. Cambridge University Press, Cambridge, 2006.
%

\bibitem{MR1314422}
M. Auslander, I. Reiten, S. O. Smal{\o}, \emph{Representation Theory of Artin Algebras}, Cambridge Studies in Advanced Mathematics, 36. Cambridge University Press, Cambridge, 1995.
%
%Auslander, M., Reiten, I. and Smal{\o}, S. O., \emph{Representation theory of Artin algebras}, Cambridge Studies in Advanced Mathematics, 36. Cambridge University Press, Cambridge, 1995.


%
%
\bibitem{MR1614186}
S. Catoiu, Ideals of the enveloping algebra $U(sl_2)$, \emph{J. Algebra} \textbf{202} (1998) 142--177.
%
%Catoiu, S., Ideals of the enveloping algebra $U(sl_2)$, \emph{J. Algebra}, \textbf{202}, 1998, 142--177.


\bibitem{MR2047446}
X. Chen, H. Huang, Y. Ye, P. Zhang, Monomial Hopf algebras, \emph{J. Algebra} \textbf{275} (2004) 212--232.
%
%Chen, X., Huang, H., Ye, Y. and Zhang, P.,  Monomial Hopf algebras, \emph{J. Algebra}, \textbf{275}, 2004, 212--232.


\bibitem{MR3775984}
H. Chimal-Dzul, C. A.  L\'{o}pez-Andrade, When is $R[x]$ a principal ideal ring?, \emph{Rev. Integr. Temas Mat.} \textbf{35} (2017) 143--148.

%
%Chimal-Dzul, H. and L\'{o}pez-Andrade, C. A., When is $R[x]$ a principal ideal ring?, \emph{Rev. Integr. Temas Mat.}, \textbf{35}, 2017, 143--148.
\bibitem{MR1314687}
F. Decruyenaere, E. Jespers, Graded commutative principal ideal rings, \emph{Bull. Soc. Math. Belg. S\'{e}r. B} \textbf{43} (1991) 143--150.
%
%Decruyenaere, F. and Jespers, E., Graded commutative principal ideal rings, \emph{Bull. Soc. Math. Belg. S\'{e}r. B}, \textbf{43}, 1991, 143--150.
\bibitem{MR1125068}
F. Decruyenaere, E. Jespers, P. Wauters, On commutative principal ideal semigroup rings, \emph{Semigroup Forum} \textbf{43} (1991) 367--377.

%
%Decruyenaere, F., Jespers, E. and Wauters, P., On commutative principal ideal semigroup rings, \emph{Semigroup Forum}, \textbf{43}, 1991, 367--377.
%\bibitem{MR1393197}
%J. Dixmier, \emph{Enveloping algebras}, Amer. Math. Soc., Providence, 1996.
%
%\bibitem{MR0564290}
%D. R. Farkas, R. L. Snider, When is the augmentation ideal principal?, \emph{Arch. Math.} \textbf{33} (1979/1980), 348--350.

\bibitem{MR0396653}
J. L. Fisher, S. K. Sehgal, Principal ideal group rings, \emph{Comm. Algebra} \textbf{4} (1976) 319--325.
%
%Fisher, J. L. and Sehgal, S. K., Principal ideal group rings, \emph{Comm. Algebra}, \textbf{4}, 1976, 319--325.
\bibitem{MR1401676}
E. Jespers, J. Okni\'nski, Semigroup algebras that are principal ideal rings, \emph{J. Algebra} \textbf{183} (1996) 837--863.

%Jespers, E. and Okni\'nski, J., Semigroup algebras that are principal ideal rings, \emph{J. Algebra}, \textbf{183}, 1996, 837--863.
\bibitem{MR1321145}
C. Kassel, \emph{Quantum Groups}, Graduate Texts in Mathematics, 155, Springer-Verlag, New York, 1995.
%
%Kassel, C., \emph{Quantum groups}, Graduate Texts in Mathematics, 155. Springer-Verlag, New York, 1995.
\bibitem{MR2236601}
L. Krop, D. E. Radford, Finite-dimensional Hopf algebras of rank one in characteristic zero, \emph{J. Algebra} \textbf{302} (2006) 214--230.
%
%Krop, L. and Radford, D. E., Finite-dimensional Hopf algebras of rank one in characteristic zero, \emph{J. Algebra}, \textbf{302}, 2006, 214--230.

\bibitem{MR1856919}
L. Li, P. Zhang, Weight property for ideals of $U_q(sl_2)$, \emph{Comm. Algebra} \textbf{29} (2001) 4853--4870.
%
%Li, L. and Zhang, P., Weight property for ideals of $U_q(sl_2)$, \emph{Comm. Algebra}, \textbf{29}, 2001, 4853--4870.

\bibitem{MR1243637}
S. Montgomery, \emph{Hopf algebras and their actions on rings}, CBMS Regional Conference Series in Mathematics, 82, Amer. Math. Soc., Providence, R.I., 1993.
%
%Montgomery, S., \emph{Hopf algebras and their actions on rings}, CBMS Regional Conference Series in Mathematics, 82. American Mathematical Society, Providence, RI, 1993.
\bibitem{MR0049909}
K. Morita, On group rings over a modular field which possess radicals expressible as principal ideals, \emph{Sci. Rep. Tokyo Bunrika Daigaku}, Sect. A. \textbf{4} (1951) 177--194.
%
%Morita, K., On group rings over a modular field which possess radicals expressible as principal ideals, \emph{Sci. Rep. Tokyo Bunrika Daigaku, Sect. A.}, \textbf{4}, 1951, 177-194.
\bibitem{MR0457540}
%Passman, D. S., Observations on group rings, \emph{Comm. Algebra}, \textbf{5}, 1977, 1119--1162.

D. S. Passman, Observations on group rings, \emph{Comm. Algebra} \textbf{5} (1977) 1119--1162.

\bibitem{MR0674652}
%Pierce, R. S., \emph{Associative algebras}, Springer-Verlag, New York, 1982.

R. S. Pierce, \emph{Associative Algebras}, Springer-Verlag, New York, 1982.

\bibitem{MR3646318}
S. Siciliano, H. Usefi, Enveloping algebras that are principal ideal rings, \emph{J. Pure Appl. Algebra} \textbf{221} (2017) 2573--2581.
%
%Siciliano, S. and Usefi, H., Enveloping algebras that are principal ideal rings, \emph{J. Pure Appl. Algebra}, \textbf{221}, 2017, 2573--2581.
\bibitem{MR2397414}
S. Scherotzke, Classification of pointed rank one Hopf algebras, \emph{J. Algebra} \textbf{319} (2008) 2889--2912.
%
%Scherotzke, S., Classification of pointed rank one Hopf algebras, \emph{J. Algebra}, \textbf{319}, 2008, 2889--2912.

\bibitem{yuwang}
Y. Wang, Classification of ideals of $8$-dimensional Radford Hopf algebra, \emph{Czech. Math. J.}, published on line, doi:10.21136/CMJ.2022.0313-21.

\bibitem{MR3284336}
Z. Wang, L. Li, Y. Zhang, Green rings of pointed rank one Hopf algebras of nilpotent type, \emph{Algebr. Represent. Theory} \textbf{17} (2014) 1901--1924.
%
%Wang, Z., Li, L. and Zhang, Y., Green rings of pointed rank one Hopf algebras of nilpotent type, \emph{Algebr. Represent. Theory}, \textbf{17}, 2014, 1901--1924.
\bibitem{MR3448167}
Z. Wang, L. Li, Y. Zhang, Green rings of pointed rank one Hopf algebras of non-nilpotent type, \emph{J. Algebra} \textbf{449} (2016) 108--137.
%
%Wang, Z., Li, L. and Zhang, Y., Green rings of pointed rank one Hopf algebras of non-nilpotent type, \emph{J. Algebra}, \textbf{449}, 2016, 108--137.



\bibitem{MR4256338}
Y. Wang, Z. Wang, L. Li, Ideals of finite-dimensional pointed Hopf algebras of rank one, \emph{Algebra Colloq.} \textbf{28} (2021) 351--360.
%
%Wang, Y., Wang, Z. and Li, L., Ideals of finite-dimensional pointed Hopf algebras of rank one, \emph{Algebra Colloq.}, \textbf{28}, 2021, 351--360.



\bibitem{MR3357949}
Z. Wang, L. You, H. Chen, Representations of Hopf-Ore extensions of group algebras and pointed Hopf algebras of rank one, \emph{Algebr. Represent. Theory} \textbf{18} (2015) 801--830.
%
%Wang, Z., You, L. and Chen, H., Representations of Hopf-Ore extensions of group algebras and pointed Hopf algebras of rank one, \emph{Algebr. Represent. Theory}, \textbf{18}, 2015, 801--830.
%

\bibitem{MR4296825}
Y. Wang, Y. Zheng, L. Li, On the ideals of the Radford Hopf algebras, \emph{Comm. Algebra} \textbf{49} (2021) 4109--4122.
%
%Wang, Y., Zheng, Y. and Li, L., On the ideals of the Radford Hopf algebras, \emph{Comm. Algebra}, \textbf{49}, 2021, 4109--4122.


%\bibliography{}
\bibliographystyle{plain}
\end{thebibliography}
\end{document}